\newcommand{\R}{\mathbb{R}}
\newcommand{\grad}{\nabla}
\newcommand{\lap}{\Delta}
\theoremstyle{plain}
\newtheorem{thm}{Theorem}[section]
\newtheorem{lemma}[thm]{Lemma}
\newtheorem{defn}[thm]{Definition}
\numberwithin{equation}{section}
\newcommand{\beq}{\begin{eqnarray}}
\newcommand{\eeq}{\end{eqnarray}}
\newcommand{\beqno}{\begin{eqnarray*}}
\newcommand{\eeqno}{\end{eqnarray*}}
\begin{document}
\title[The Ericksen-Leslie system in $\R^2$]{Regularity and existence of global solutions to the Ericksen-Leslie system in $\R^2$} 

\author[J. Huang]{Jinrui Huang} 
\address{School of Mathematical Sciences\\
South China Normal University\\
Guangzhou 510631, China}
\email{huangjinrui1@163.com}
\author[F. Lin]{Fanghua Lin}
\address{Courant Institute of Mathematical Sciences\\
New York University\\
New York 100012, USA}
\email{linf@cims.nyu.edu}
\author[C. Wang]{Changyou Wang} 
\address{Department of Mathematics\\
University of Kentucky \\
Lexington, KY 40506, USA}

\email{cywang@ms.uky.edu} 
\date{May 17, 2013}
\keywords{Ericksen stress, Leslie stress, nematic liquid crystal flow,  Leray-Hopf type weak solution.
\subjclass[2000]{35Q35, 76N10}}

\begin{abstract}
In this paper, we first establish the regularity theorem for suitable weak solutions  to the Ericksen-Leslie system in $\R^2$.
Building on such a regularity, we then establish  the existence of a global weak solution to the Ericksen-Leslie system in $\R^2$ for any initial data in the energy space,
under the physical constraint conditions on the Leslie coefficients ensuring the dissipation of energy of the system, which is smooth away from at most finitely many
times.  This extends earlier works by Lin-Lin-Wang \cite{LLW}  on a simplified nematic liquid crystal flow.
\end{abstract}
\maketitle 








\setcounter{section}{0} \setcounter{equation}{0}
\section{Introduction}

The hydrodynamic theory of nematic liquid crystals was developed by Ericksen and Leslie during the period of 1958 through 1968 \cite{Ericksen1, Ericksen2,
Genn, Leslie1}. It is referred as the Ericksen-Leslie system in the literature. It reduces to the Ossen-Frank theory in the static case, which has been successfully
studied (see, e.g., Hardt-Lin-Kinderlehrer \cite{HLK}).

In $\R^3$, let $u=(u_1,u_2,u_3)^T:\R^3\to\R^3$ denote the fluid vector field of the underlying incompressible fluid,
and $d=(d_1,d_2,d_3)^T:\R^3\to \mathbb S^2$ denote the orientation order parameter representing the macroscopic average of nematic liquid crystal molecular directors.  Assume that the fluid is homogeneous (e.g., the fluid density $\rho\equiv 1$) and the inertial constant is zero (i.e., $\rho_1=0$), the general Ericksen-Leslie system in $\R^3$ consists of
the following equations (cf. \cite{Ericksen3, Leslie1, Leslie2, Lin-Liu3}):
\beq\label{ES1}
\begin{cases}
	\partial_t u+u\cdot \nabla u=\nabla\cdot\hat{\sigma},\\
	\nabla\cdot u=0,\\
	d\times \Big(g+\nabla\cdot(\frac{\partial W}{\partial(\nabla d)})-\frac{\partial W}{\partial d}\Big)=0,
\end{cases}
\eeq
where $\hat{\sigma}=(\hat{\sigma}_{ij})$ is the total stress given by
\beq
\hat{\sigma}_{ij}=-P\delta_{ij}-\frac{\partial W}{\partial d_{k,i}}d_{k,j}+\sigma_{ij}^L,
\eeq
with $P$ a scalar function representing the pressure, $W$ being the Ossen-Frank energy density function given by
\beq\label{OF}
W:=W(d,\nabla d)\equiv \frac12\Big(k_1(\nabla\cdot d)^2+k_2|d\times(\nabla\times d)|^2+k_3|d\cdot(\nabla\times d)|^2
+(k_2+k_4)[{\rm{tr}}(\nabla d)^2-(\nabla\cdot d)^2]\Big)
\eeq
for some elasticity constants $k_1, k_2, k_3, k_4$,
and $\sigma^L=\sigma^L(u,d)=(\sigma_{ij}^L(u,d))$ representing the Leslie stress tensor given by
\beq\label{leslie}
\sigma_{ij}^L(u,d)
=\mu_1 \sum_{k,p} d_k d_p A_{kp}  d_i d_j +\mu_2 N_i d_j +\mu_3 d_i N_j +\mu_4 A_{ij}+\mu_5\sum_{k} A_{ik} d_k d_j +\mu_6
\sum_{k} A_{jk}d_k d_i,
\eeq
for six viscous coefficients $\mu_1,\cdots,\mu_6$, called Leslie's coefficients, and
\beq\label{g-formula}
g_i=\lambda_1 N_i+\lambda_2 \sum_{j} d_j A_{ji}=\lambda_1\big(N_i+\frac{\lambda_2}{\lambda_1}\sum_{j} d_j A_{ji}\big).
\eeq
Throughout this paper, we use
\begin{eqnarray*}
A_{ij}=\frac12\left(\frac{\partial u_j}{\partial x_i}+\frac{\partial u_i}{\partial x_j}\right),
&&\Omega_{ij}=\frac12\left(\frac{\partial u_j}{\partial x_i}-\frac{\partial u_i}{\partial x_j}\right),\\
\omega_i=\dot{d_i}=\partial_td_i+(u\cdot\nabla) d_i, &&  N_{i}=\omega_i-\sum_{j}\Omega_{ij} d_j,
\end{eqnarray*}
denote the rate of strain tensor, the skew-symmetric part of the strain rate, the material derivative of $d$ and the rigid rotation part of the director
changing rate by fluid vorticity, respectively.

Due to the temperature dependence of the Leslie coefficients, various coefficients have different behavior: $\mu_4$ does not involve  the alignment
properties and hence is a smooth function of temperature, while all the other $\mu_i's$ describe couplings between molecule orientation and the flow,
and might be affected by the change of the nematic order parameter $d$. In this paper, we only consider the isothermal case in which all $\mu_i's$ are
assumed to be constants. The Leslie coefficients $\mu_i$'s and $\lambda_1$, $\lambda_2$ satisfy the following relations:
\begin{eqnarray}\label{necessary}
	\lambda_1=\mu_2-\mu_3, && \lambda_2=\mu_5-\mu_6,\\
	\mu_2+\mu_3&=&\mu_6-\mu_5.\label{parodi}
\end{eqnarray}
The relation (\ref{necessary}) is a necessary condition in order to satisfy the equation of motion identically, while (\ref{parodi}) is called Parodi's relation.
Under the assumption of Parodi's relation, the hydrodynamics of an incompressible nematic liquid crystal flow involves five independent Leslie's coefficients.
To avoid the complexity arising from the general Ossen-Frank energy functional, we consider the  elastically isotropic case $k_1=k_2=k_3=1$
and $k_4=0$ so that the Ossen-Frank energy reduces to the Dirichlet energy: $\displaystyle W(d,\nabla d)=\frac12|\nabla d|^2$. In this case, we have
$$
\frac{\partial W}{\partial d_{k,i}}d_{k,j}=\left\langle\frac{\partial d}{\partial x_i}, \frac{\partial d}{\partial x_j} \right\rangle=(\nabla d\odot\nabla d)_{ij},
\ \ \nabla\cdot\left(\frac{\partial{W}}{\partial(\nabla d)}\right)=\Delta d, \ \ \frac{\partial W}{\partial d}=0.
$$
As a consequence, the Ericksen-Leslie system (\ref{ES1}) can be written as
\beq\label{ES2}
\begin{cases}
\partial_t u+u\cdot \nabla u+\nabla P=-\nabla\cdot(\nabla d\odot\nabla d)+\nabla\cdot(\sigma^L(u,d)),\\
\nabla\cdot u = 0,\\
\partial_t d+u\cdot\nabla d-\Omega d+\frac{\lambda_2}{\lambda_1} Ad
=\frac{1}{|\lambda_1|}\left(\Delta d+|\nabla d|^2d\right)+\frac{\lambda_2}{\lambda_1}\left(d^TAd\right) d.
\end{cases}
\eeq
Since the general Ericksen-Leslie system is very complicated, earlier attempts of rigorous mathematical analysis of (\ref{ES2}) were made for a simplified
system that preserve the crucial energy dissipation feature as in (\ref{ES2}), pioneered by Lin \cite{Lin1} and Lin-Liu \cite{Lin-Liu1, Lin-Liu2}.
More precisely, by adding the penalty term $\displaystyle\frac{1}{4\epsilon^2}(1-|d|^2)^2$ ($\epsilon>0$) in the energy functional $W$
to remove the nonlinearities resulting from the nonlinear constraints $|d|=1$, Lin and Liu have studied in \cite{Lin-Liu1, Lin-Liu2} the following Ginzburg-Landau approximate system:
\beq\label{ES3}
\begin{cases}
\partial_t u+u\cdot\nabla u+\nabla P=\mu \Delta u-\nabla\cdot(\nabla d\odot\nabla d),\\
\nabla\cdot u = 0, \\
\partial_t d+u\cdot\nabla d=\Delta d+\frac{1}{\epsilon^2}\left(1-|d|^2\right)d.
\end{cases}
\eeq
They have established in \cite{Lin-Liu1} the existence of global weak solutions in dimensions $2$ and $3$, and global strong solutions of (\ref{ES3}) in dimension $2$, the local existence of strong solutions in dimension $3$, and  the existence of global strong solutions for large viscosity $\mu>0$ in dimension $3$. A partial regularity for suitable
weak solutions of (\ref{ES3}) in dimension $3$, analogous to Caffarelli-Kohn-Nirenberg \cite{CKN} on the Naiver-Stokes equation, has been proved in \cite{Lin-Liu2}. As already pointed out by \cite{Lin-Liu1}, it is still a challenging open problem as $\epsilon$ tends to zero,
whether solutions $(u^\epsilon, d^\epsilon)$ of (\ref{ES3}) converge to that of the following simplified Ericksen-Leslie system:
\beq\label{ES4}
\begin{cases}
\partial_t u+u\cdot\nabla u+\nabla P=\mu \Delta u-\nabla\cdot(\nabla d\odot\nabla d),\\
\nabla\cdot u = 0, \\
\partial_t d+u\cdot\nabla d=\Delta d+|\nabla d|^2d.
\end{cases}
\eeq
Very recently, there have been some important advances on (\ref{ES4}). For dimension $2$, Lin-Lin-Wang \cite{LLW} and Lin-Wang \cite{LW} have established the existence of a global unique weak solution of (\ref{ES4}) in any smooth bounded domain, under the initial and boundary conditions, which is smooth away from possibly finitely many times (see also Hong \cite{Hong}, Xu-Zhang \cite{XZ}, Hong-Xin \cite{HX},
and Lei-Li-Zhang\cite{LZZ} for related results
in $\R^2$). It is an open problem whether there exists a global weak solution of (\ref{ES4}) in dimension $3$.
There have been some partial results towards this problem. For example, the local existence and uniqueness of  strong solutions of (\ref{ES4}) has been proved by
Ding-Wen \cite{DW}, the blow-up criterion of local strong solutions of (\ref{ES4}), similar to Beale-Kato-Majda \cite{BKM} for Naiver-Stokes equations,
has been established by Huang-Wang \cite{HW}, the global well-posedness of (\ref{ES4}) for rough initial data $(u_0,d_0)$ with small ${\rm{BMO}}\times {\rm{BMO}}^{-1}$-norm has been shown by Wang \cite{W}, and the local well-posedness of (\ref{ES4}) for initial data $(u_0,d_0)$ with small
$L^3_{\rm{uloc}}(\R^3)$-norm of $(u_0,\nabla d_0)$ has been proved by Hineman-Wang \cite{Hine-Wang} (here $L^3_{\rm{uloc}}(\R^3)$ denotes
the locally uniform $L^3$-space on $\R^3$).

For the general Ericksen-Leslie system (\ref{ES2}) in $\R^3$, there have also been some recent works. For example, Lin-Liu \cite{Lin-Liu3} and Wu-Xu-Liu
\cite{WXL} have considered
its Ginzburg-Landau approximation:
\beq\label{ES5}
\begin{cases}
\partial_t u+u\cdot \nabla u+\nabla P=-\nabla\cdot(\nabla d\odot\nabla d)+\nabla\cdot(\sigma^L(u,d)),\\
\nabla\cdot u = 0,\\
\partial_t d+u\cdot\nabla d-\Omega d+\frac{\lambda_2}{\lambda_1} Ad
=\frac{1}{|\lambda_1|}\left(\Delta d+\frac{1}{\epsilon^2}(1-|d|^2)d\right),
\end{cases}
\eeq
and have established the existence of global weak solutions, and the local existence and uniqueness of strong solutions of (\ref{ES5}) under
certain conditions on the Leslie coefficients $\mu_i$'s. In particular, there have been results developed by \cite{WXL} concerning
the role of Parodi's condition (\ref{parodi}) in the well-posedness and stability of (\ref{ES5}).  Most recently, Wang-Zhang-Zhang \cite{WZZ}
have studied the general Ericksen-Leslie system (\ref{ES2}) and established the local well-posedness,  and the global well-posedness
for small initial data under a seemingly optimal condition on the Leslie coefficients $\mu_i$'s.

In this paper, we are mainly interested in both the regularity and existence of global weak solutions of the initial value problem of the
general Ericksen-Leslie system  (\ref{ES2}) in $\R^2$.

In $\R^2$,  however, we need to modify several terms inside the system (\ref{ES2}) in order to make it into a closed system.
Since $u$ is a planar vector field in $\R^2$, both $\Omega$ and $A$ are horizontal $2\times 2$-matrices, henceforth
we assume that
\begin{eqnarray} \label{reduction1}
\Omega d:=(\Omega\hat{d}, 0)^T, \  Ad:=(A\hat{d}, 0)^T, \ N:=\left(\partial_t d+u\cdot\nabla d\right)^T-\big(\Omega \hat{d}, 0\big)^T,
\end{eqnarray}
as vectors in $\R^3$, while $\displaystyle\hat{N}:=(\partial_t \hat d+u\cdot\nabla \hat d-\Omega \hat{d})^T$ is
a vector in $\R^2$.
Here
$$\hat{d}=(d_1,d_2,0)^T \ {\rm{for}}\ d=(d_1,d_2,d_3)^T\in\R^3,$$
and $\sigma^L(u,d)$ is a $2\times 2$-matrix valued function given by
\beq\label{reduction2}
\sigma_{ij}^L(u,d):=\mu_1\sum_{k,p=1}^2 d_kd_pA_{kp} d_i d_j+\mu_2 N_i d_j +\mu_3 N_j d_i +\mu_4 A_{ij}
+\mu_5 \sum_{k=1}^2 A_{ik} d_k d_j +\mu_6 \sum_{k=1}^2 A_{jk} d_k d_i
\eeq
for $1\le i, j\le 2$, and
$$d^TAd:=\hat{d}^T A \hat{d} \equiv\sum_{i,j=1}^2 A_{ij} d_i d_j.$$

We will consider the initial value problem of (\ref{ES2}) in $\R^2$, i.e.,
\beq\label{IV}
(u,d)\Big|_{t=0}=(u_0, d_0) \ {\rm{in}}\ \R^2
\eeq
for any given $u_0\in \mathbf H$, and $d_0\in {H}^1_{e_0}(\R^2,\mathbb S^2)$. Here we denote the relevant function spaces
$$\mathbf {H}={\rm the}\ {\rm closure}\ {\rm of}\ C_0^\infty(\mathbb{R}^2,\mathbb{R}^2)\cap \{v\ |\ \nabla\cdot v=0\}\ {\rm in}\ L^2(\mathbb{R}^2,\mathbb{R}^2),$$
$$H^k_{e_0}(\R^2,\mathbb S^2)
=\Big\{d:\mathbb R^2\to \mathbb S^2\ | \ \ d-e_0\in H^k(\R^2,\mathbb R^3)\Big\} \ (k\in \mathbb N_+)$$
for some constant vector $e_0\in\mathbb S^2$,
and
$$\mathbf {J}={\rm the}\ {\rm closure}\ {\rm of}\ C_0^\infty(\mathbb{R}^2,\mathbb{R}^2)\cap \{v \ | \ \nabla\cdot v=0\}
\ {\rm in}\ H^1(\mathbb{R}^2,\mathbb{R}^2).$$

\begin{defn} For $0<T\le\infty$, $u\in  L^2([0,T],\mathbf{H})$ and $d\in L^2\left([0,T],H^1_{e_0}(\mathbb{R}^2,\mathbb S^2)\right)$
is called a weak solution of the Ericksen-Leslie system (\ref{ES2}) together with the initial condition (\ref{IV}) in $\R^2$, if
\beqno
      &&-\int_0^T\int_{\mathbb{R}^2}\langle u,\ \psi^\prime\phi\rangle-\int_0^T\int_{\mathbb{R}^2}u\otimes u:\psi\nabla\phi
+\int_0^T\int_{\mathbb{R}^2}\sigma^L(u,d):\psi\nabla\phi
      \\&=&
      -\psi(0)\int_{\mathbb{R}^2}\langle u_0,\ \phi\rangle+\int_0^T\int_{\mathbb{R}^2}\langle\nabla d\odot\nabla d,\ \psi\nabla\phi\rangle,
\eeqno
and
\beqno
      &&-\int_0^T\int_{\mathbb{R}^2}\langle d,\ \psi^\prime\tilde{\phi}\rangle+\int_0^T\int_{\mathbb{R}^2}\left\langle u\cdot\nabla d-\Omega d+\frac{\lambda_2}{\lambda_1}Ad,\ \psi\tilde{\phi}\right\rangle
      \\&=&
      -\psi(0)\int_{\mathbb{R}^2}\langle d_0,\ \tilde{\phi}\rangle+\int_0^T\int_{\mathbb{R}^2}\Big[-\frac{1}{|\lambda_1|}\langle\nabla d, \ \psi\nabla\tilde{\phi}\rangle
+\frac{1}{|\lambda_1|}|\nabla d|^2\langle d,\ \psi\tilde{\phi}\rangle+\frac{\lambda_2}{\lambda_1} (d^TAd)\langle d,\ \psi\tilde{\phi}\rangle\Big].
\eeqno
for any $\psi\in C^\infty([0,T])$ with $\psi(T)=0$, $\phi\in\mathbf J$, and $\tilde{\phi}\in H^1(\mathbb{R}^2,\R^3)$.\footnote{By using (\ref{ES2})$_3$, we see that
$N\in L^2_tH^{-1}_x+L^1_tL^1_x$. Hence $\sigma^L(u,d)\in L^1_t (H^{-1}_x\cdot H^1_x)
+L^1_t(L^1_x\cdot L^\infty_x)$ and $\displaystyle\int_0^T\int_{\R^2}\sigma^{L}(u,d):\psi\nabla\phi$ is well defined.}
\end{defn}

In this paper, we will establish the regularity of {\it suitable} weak solutions of (\ref{ES2}),  and the existence and uniqueness of global weak solutions to (\ref{ES2}) and (\ref{IV}) in $\R^2$.
As consequences, these extend the previous works by Lin-Lin-Wang \cite{LLW} to the general case.

For $x_0\in\mathbb{R}^2$, $t_0\in (0,+\infty)$, $z_0=(x_0,t_0)$ and $0<r\le\sqrt{t_0}$, denote
$$B_r(x_0)=\{x\in \mathbb{R}^2\ | \  |x-x_0|\leq r\},\ \ P_r(z_0)=B_r(x_0)\times[t_0-r^2,t_0].$$
When $x_0=(0,0)$ and $t_0=0$, we simply denote $B_r=B_r(0),\ P_r=P_r(0).$

Now we introduce the notion of suitable weak solutions of (\ref{ES2}).
\begin{defn} For $0<T<+\infty$ and a domain $O\subset\R^2$, a weak solution $u\in L^2(\R^2\times [0,T],\R^2)$,
with $\nabla\cdot u=0$, and $d\in L^2_tH^1_x(O\times [0, T],\mathbb S^2)$ of (\ref{ES2}) is called a {\rm{suitable}} weak solution of (\ref{ES2})
if, in addition, $u\in \big(L^\infty_tL^2_x\cap L^2_t H^1_x\big)(O\times [0,T],\R^2)$, 
$d\in \big(L^\infty_t H^1_x\cap L^2_t H^2_x\big)(O\times [0,T],\mathbb S^2)$, and $P\in L^2(O\times [0,T])$.
\end{defn}

\begin{thm}\label{regularity} For $0<T<+\infty$ and a domain $O\subset\R^2$, assume that $u\in \big(L^\infty_tL^2_x\cap L^2_t H^1_x\big)(O\times [0,T],\R^2)$,
$d\in L^\infty ([0,T], H^1(O,\mathbb S^2))\cap L^2 ([0,T], H^2(O,\mathbb S^2))$, and $P\in L^2(O\times [0,T])$ is a suitable weak solution of (\ref{ES2}).
Assume both (\ref{necessary}) and (\ref{parodi}) hold. If, in additions, the Leslie coefficients $\mu_i$'s satisfy
\beq\label{Leslie_condition}
\lambda_1<0, \ \mu_1-\frac{\lambda_2^2}{\lambda_1}\ge 0, \ \mu_4>0, \ \mu_5+\mu_6\ge -\frac{\lambda_2^2}{\lambda_1},
\eeq
then $(u,d)\in C^\infty(O\times (0,T],\R^2\times \mathbb S^2)$.
\end{thm}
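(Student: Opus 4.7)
I will follow the Caffarelli--Kohn--Nirenberg and Lin--Lin--Wang \cite{LLW} paradigm: first establish a local energy inequality whose dissipation, thanks to \eqref{Leslie_condition} and \eqref{parodi}, is positive semi-definite; then prove a small-energy $\epsilon$-regularity theorem in the parabolic scale-invariant setting of $\R^2$; finally verify that the integrability built into the definition of a suitable weak solution forces the smallness hypothesis at every interior point of $O\times (0,T]$, and bootstrap the resulting H\"older regularity to $C^\infty$ by standard parabolic Schauder theory.

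\textbf{Dissipation structure and local energy inequality.} A direct calculation, expanding $\sigma^L{:}A$ and invoking Parodi's relation $\mu_2+\mu_3=-\lambda_2$ together with the pointwise identity
\begin{equation*}
|\lambda_1|\,N=(\lap d+|\grad d|^2 d)+\lambda_2\bigl(Ad-(d^TAd)d\bigr),
\end{equation*}
which follows from $(\ref{ES2})_3$ and $|d|=1$, shows that the natural dissipation density $\sigma^L{:}A-g\cdot N$ equals
\begin{equation*}
\mu_4|A|^2+(\mu_1+\mu_5+\mu_6)(d^TAd)^2+\Bigl(\mu_5+\mu_6+\tfrac{\lambda_2^2}{\lambda_1}\Bigr)\bigl|Ad-(d^TAd)d\bigr|^2+\tfrac{1}{|\lambda_1|}\bigl|\lap d+|\grad d|^2 d\bigr|^2.
\end{equation*}
Under \eqref{Leslie_condition} every coefficient is non-negative, so the dissipation dominates $\mu_4|A|^2+\frac{1}{|\lambda_1|}|\lap d+|\grad d|^2 d|^2$. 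Testing $(\ref{ES2})_1$ against $u\phi^2$ and $(\ref{ES2})_3$ against $-(\lap d+|\grad d|^2 d)\phi^2$ for a smooth cutoff $\phi$ supported in a parabolic cylinder $P_r(z_0)$, integrating by parts, and exploiting this positivity produces a local energy inequality of the schematic form
\begin{equation*}
\sup_{t}\int_{B_r(x_0)}\phi^2(|u|^2+|\grad d|^2)+\iint_{P_r(z_0)}\phi^2\bigl(|\grad u|^2+|\lap d+|\grad d|^2 d|^2\bigr)\le C\,R(\phi,u,d,P),
\end{equation*}
where the right-hand side $R$ aggregates the customary integrals $\int|u|^3$, $\int|u||P|$, $\int|u||\grad d|^2$ and cutoff-error terms.

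\textbf{$\epsilon$-regularity (the main obstacle).} The crux is to prove the existence of $\epsilon_0>0$ such that whenever
\begin{equation*}
\Phi(r;z_0):=\sup_{t_0-r^2\le t\le t_0}\int_{B_r(x_0)}(|u|^2+|\grad d|^2)+\iint_{P_r(z_0)}\bigl(|\grad u|^2+|\grad^2 d|^2+|P|^2\bigr)\le\epsilon_0,
\end{equation*}
one has $(u,d)\in C^\alpha(P_{r/2}(z_0))$. I would combine the local energy inequality with the 2D Ladyzhenskaya interpolation $\|f\|_{L^4(\R^2)}^2\lesssim\|f\|_{L^2}\|\grad f\|_{L^2}$ (which upgrades $u,\grad d\in L^\infty_t L^2_x\cap L^2_t H^1_x$ to $L^4_{t,x}$) and Calder\'on--Zygmund bounds for the pressure from $-\lap P=\grad\cdot\grad\cdot(u\otimes u+\grad d\odot\grad d-\sigma^L)$, and then run a Moser/hole-filling iteration on dyadic parabolic subcylinders to extract $L^\infty$ bounds for $(u,\grad d)$ inside $P_{r/2}(z_0)$. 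The principal technical difficulty is that $\sigma^L$ contains $N=\dot d-\Omega d$ and hence $\partial_t d$, so it is \emph{not} a function of $(u,\grad d)$ alone; this is handled by substituting the identity above to replace $N$ by a smooth expression in $(d,\grad d,\lap d,A)$, after which Ladyzhenskaya's inequality gives the $L^{3/2}$ control of $\sigma^L$ needed to close the iteration.

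\textbf{Verification of smallness and bootstrap to $C^\infty$.} For a suitable weak solution, Aubin--Lions applied to $u\in L^\infty_tL^2_x\cap L^2_tH^1_x$ and $\grad d\in L^\infty_tL^2_x\cap L^2_tH^1_x$ (with $\partial_t u$, $\partial_t \grad d$ in appropriate negative-order spaces from the equations) gives $u,\grad d\in C([0,T],L^2_{\rm loc})$; consequently $\sup_{t_0-r^2\le t\le t_0}\int_{B_r(x_0)}(|u|^2+|\grad d|^2)\to 0$ as $r\downarrow 0$ at every $(x_0,t_0)\in O\times(0,T]$, while absolute continuity of the Lebesgue integral forces $\iint_{P_r(z_0)}(|\grad u|^2+|\grad^2 d|^2+|P|^2)\to 0$ since each of those integrands lies in $L^1_{t,x}$. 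Thus $\Phi(r;z_0)\to 0$ at every interior point, and the $\epsilon$-regularity theorem furnishes local H\"older continuity of $(u,d)$ throughout $O\times(0,T]$. Once $(u,d)\in C^\alpha$, the $d$-equation reads as a perturbed heat equation with H\"older coefficients while the momentum equation (after the $N$-substitution described above) reads as a perturbed Stokes system with viscosity $\mu_4>0$; iterating standard parabolic Schauder theory (Ladyzhenskaya--Solonnikov--Ural'tseva) then lifts the regularity to $C^\infty(O\times(0,T])$, completing the proof.
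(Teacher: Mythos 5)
Your outline agrees with the paper only up to the local energy inequality and the verification-of-smallness step; the heart of the proof — the $\epsilon$-regularity lemma and the subsequent bootstrap — is handled in a substantially different way in the paper, and your version has a genuine gap at exactly that point.

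The paper's route from smallness of a scale-invariant quantity to smoothness is: (a) a blow-up/compactness argument (Lemma \ref{le:epsilon}) that linearizes the equation and reduces the decay estimate for the quantity $\Phi(u,d,P,z_0,r)$ to the regularity of a limiting \emph{coupled} linear system (\ref{lq3}); (b) regularity of (\ref{lq3}) established via higher-order local energy estimates (Lemma \ref{le:epsilon2}) that rely critically on the same cancellations among the Leslie-stress couplings used to obtain the energy inequalities; (c) iterating the decay to get Morrey bounds and applying parabolic Riesz-potential estimates to obtain $(u,\nabla d)\in L^q_{\rm loc}$ for all $q<\infty$ (Lemma \ref{high_integrability}); and (d) higher-order energy estimates for the \emph{nonlinear} system (Lemma \ref{higher-order-estimate}), again leaning on the same cancellations, to obtain $W^{k,\infty}$ bounds for every $k$. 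You instead propose to run a ``Moser/hole-filling iteration on dyadic parabolic subcylinders'' on the nonlinear system to extract $L^\infty$ and then invoke parabolic Schauder theory. This does not address the essential structural difficulty the paper has to fight: after eliminating $N$ via $(\ref{ES2})_3$, the Leslie stress $\sigma^L$ depends on $\Delta d$ (and $\nabla u$), so $\nabla\cdot\sigma^L$ produces the same-order derivatives as the dissipation itself, and the $u$- and $d$-equations are genuinely coupled at top order. The system is therefore not of a form to which scalar De Giorgi--Nash--Moser or a naive Schauder bootstrap applies; the paper's higher-order energy estimates close only because of the specific sign structure of \eqref{Leslie_condition} acting at each order of differentiation (see, e.g., \eqref{J1-estimat1} in Lemma \ref{higher-order-estimate}), a mechanism that your Schauder argument ignores. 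Without spelling out how the iteration closes on the coupled system with the non-local pressure and the $\Delta d$-dependence of $\sigma^L$, the central $\epsilon$-regularity step remains unproved.

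One smaller point: your dissipation identity is stated as an equality, but as written it is only equal to the paper's expression in \eqref{I-estimate} when $|\hat d|=1$ (i.e.\ $d_3=0$); in general the correct decomposition is the one in \eqref{u-d-estimate2}, with the $|A\cdot\hat d|^2$ term rather than $|A\hat d-(\hat d^TA\hat d)\hat d|^2$. The nonnegativity conclusion you draw is nevertheless correct under \eqref{Leslie_condition}, so this is a computational slip rather than a conceptual one. By contrast, your verification that $\Phi(r;z_0)\to 0$ pointwise via Aubin--Lions/absolute continuity, and hence that the smallness hypothesis holds at every interior point, is sound and matches the spirit of the paper's use of the definition of suitable weak solution.
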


Employing the priori estimate given by the proof of Theorem \ref{regularity}, we will prove the existence of global weak solutions of (\ref{ES2}) and (\ref{IV})
that enjoy partial smoothness properties.

\begin{thm}\label{existence} For any $u_0\in \mathbf H$ and $d_0\in H^1_{e_0}(\R^2, \mathbb S^2)$, assume the conditions (\ref{necessary}),
(\ref{parodi}), and (\ref{Leslie_condition}) hold. Then there is a global weak solution $u\in L^\infty([0,+\infty), \mathbf H)\cap L^2([0,+\infty),
\mathbf J)$ and $d\in L^\infty([0,+\infty), H^1_{e_0}(\R^2,\mathbb S^2))$ of the general Ericksen-Leslie system (\ref{ES2}) and
(\ref{IV}) such that the following properties hold:\\
(i) There exist a nonnegative integer $L$, depending only on $(u_0,d_0)$, and $0<T_1<\cdots<T_L<+\infty$ such that
$$
(u,d)\in C^\infty\big(\R^2\times ((0,+\infty)\setminus\{T_i\}_{i=1}^L), \R^2\times \mathbb S^2\big).
$$
(ii) Each singular time $T_i$, $1\le i\le L$, can be characterized by
\beq\label{energy_concentration}
\liminf_{t\uparrow T_i}\max_{x\in\R^2}\int_{B_r(x)}(|u|^2+|\nabla d|^2)(y,t)\ge 8\pi, \ \forall r>0.
\eeq
Moreover, there exist $x_m^i\rightarrow x_0^i\in \R^2$, $t_m^i\uparrow T_i$, $r_m^i\downarrow 0$ and a non-trivial smooth harmonic map
$\omega_i:\R^2\to \mathbb S^2$ with finite energy such that as $m\rightarrow\infty$,
$$(u_m^i, d_m^i)\rightarrow (0,\omega_i) \ {\rm{in}}\ C^2_{\rm{loc}}(\R^2\times [-\infty, 0]),$$
where
$$u_m^i(x,t)=r_m^iu(x_m^i+r_m^i x, t_m^i+(r_m^i)^2 t),
\  d_m^i(x,t)=d(x_m^i+r_m^i x, t_m^i+(r_m^i)^2 t).$$
(iii) Set $T_0=0$. Then
$$(d_t, \nabla^2 d)\in \bigcap_{j=0}^{L-1}\bigcap_{\epsilon>0}L^2(\R^2\times [T_j, T_{j+1}-\epsilon]) 
\bigcap\big(\bigcap_{T_L<T<+\infty} L^2(\R^2\times [T_L, T])\big).$$
(iv) There exist $t_k\uparrow +\infty$ and a smooth harmonic map $d_\infty\in C^\infty(\R^2,\mathbb S^2)$ with finite energy such that
$u(\cdot, t_k)\rightarrow 0$ in $H^1(\R^2)$ and $d(\cdot, t_k)\rightharpoonup d_\infty$ in $H^1(\R^2)$, and there exist a nonnegative
integer $l$, $\{x_i\}_{i=1}^l\subset\R^2$, and nonnegative integers $\{m_i\}_{i=1}^l$ such that
\beq\label{energy_id}
|\nabla d(\cdot, t_k)|^2\,dx\rightharpoonup |\nabla d_\infty|^2\,dx+8\pi\sum_{i=1}^l m_i\delta_{x_i}.
\eeq
(v) If either the third component of $d_0$,  $(d_0)_3$, is nonnegative, or
$$\int_{\R^2}(|u_0|^2+|\nabla d_0|^2)\le 8\pi,$$
then $(u,d)\in C^\infty\left(\R^2\times (0,+\infty),\R^2\times\mathbb S^2\right)$. Moreover, there exist $t_k\uparrow +\infty$ and a smooth harmonic
map $d_\infty\in C^\infty(\R^2, \mathbb S^2)$ with finite energy such that
$$(u(\cdot, t_k), d(\cdot, t_k))\rightarrow (0, d_\infty) \ {\rm{in}}\ C^2_{\rm{loc}}(\R^2).$$
\end{thm}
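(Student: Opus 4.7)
The plan is to build a global weak solution by approximation, read Theorem \ref{regularity} as an $\varepsilon$-regularity statement in parabolic cylinders, and run a Struwe-type blow-up analysis, following the blueprint of Lin-Lin-Wang \cite{LLW} for the simplified system.

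First I would construct approximate smooth solutions $(u^\varepsilon,d^\varepsilon)$, either via the Ginzburg-Landau penalty (\ref{ES5}) as in \cite{Lin-Liu3,WXL} or by mollifying the initial data and using the local smooth solutions of \cite{WZZ}. The algebraic heart of the matter is the basic energy law: pairing (\ref{ES2})$_1$ with $u$, pairing (\ref{ES2})$_3$ with $-(\Delta d+|\nabla d|^2 d)$, and using (\ref{necessary})--(\ref{parodi}) yields
\[
\dfull{t}\int_{\R^2}\tfrac12(|u|^2+|\nabla d|^2)+\mathcal{D}(u,d)=0,
\]
where $\mathcal{D}(u,d)$ is a sum involving $\mu_4|A|^2$, $\tfrac{1}{|\lambda_1|}|\Delta d+|\nabla d|^2 d|^2$, and quadratic forms in $Ad$ and $d^TAd$ whose coefficients are exactly the quantities in (\ref{Leslie_condition}), so that $\mathcal{D}\ge 0$. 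This gives uniform bounds for $u^\varepsilon$ in $L^\infty_t\mathbf H\cap L^2_t\mathbf J$ and for $d^\varepsilon$ in $L^\infty_t H^1$. Passing to a subsequence via Aubin--Lions (bounding $\partial_t$ in negative-order spaces using the dissipation) produces a weak limit $(u,d)$ that satisfies Definition 1.1 and a global energy inequality.

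Next I would extract the regularity structure (i)--(iii) by reading Theorem \ref{regularity} quantitatively as an $\varepsilon$-regularity statement: there exists $\varepsilon_0>0$ such that whenever
\[
\sup_{t_0-r^2\le t\le t_0}\int_{B_r(x_0)}(|u|^2+|\nabla d|^2)(\cdot,t)<\varepsilon_0,
\]
the pair $(u,d)$ is smooth on $P_{r/2}(z_0)$. Combined with the global energy inequality, a standard covering argument shows that at each time $t$ only finitely many concentration points of mass $\ge\varepsilon_0$ exist, and only finitely many singular times $T_1<\cdots<T_L$ can occur because each consumes at least $8\pi$ of the conserved energy. At each $T_i$, pick $x_m^i,t_m^i,r_m^i$ realising the concentration; the rescaled $(u_m^i,d_m^i)$ solve a rescaled Ericksen-Leslie system with bounded energy on growing parabolic cylinders, and the rescaled dissipation integral forces the $L^2$-norms of $r_m^i A_m^i$ and $r_m^i N_m^i$ to vanish. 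This implies $u_m^i\to 0$, and (upgrading with $\varepsilon$-regularity to $C^2_{\mathrm{loc}}$) the limit $\omega_i$ is a smooth finite-energy harmonic map $\R^2\to\mathbb S^2$, hence satisfies $\int|\nabla\omega_i|^2\ge 8\pi$, giving (ii). Part (iii) then follows from smoothness on strips away from $\{T_i\}$ combined with the $L^2$-integrability of $\Delta d+|\nabla d|^2 d$ and $A$ coming from the global energy identity.

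For (iv), the total dissipation $\int_0^\infty\mathcal{D}<\infty$ furnishes a sequence $t_k\uparrow\infty$ along which $A(\cdot,t_k)$ and $(\Delta d+|\nabla d|^2 d)(\cdot,t_k)$ tend to $0$ in $L^2(\R^2)$; this yields $u(\cdot,t_k)\to 0$ in $H^1$ via the divergence-free constraint and the $\mathbf H$-bound, while $d(\cdot,t_k)$ converges weakly in $H^1_{\mathrm{loc}}$ to a finite-energy smooth harmonic map $d_\infty$ with defect concentrated at finitely many points, and the neck analysis and energy quantization of \cite{LLW,LW} then give (\ref{energy_id}). For (v), if the initial energy is at most $8\pi$ then (\ref{energy_concentration}) is impossible; if $(d_0)_3\ge 0$ a maximum principle for the scalar $d_3$ (valid because in the 2D reduction (\ref{reduction1})--(\ref{reduction2}) the third rows of $\Omega d$ and $Ad$ vanish) persists along the flow and hence on every would-be bubble $\omega_i$, but non-constant smooth harmonic maps $\R^2\to\mathbb S^2$ are surjective, ruling out any nontrivial $\omega_i$. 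The main obstacle is the $\varepsilon$-regularity step: because $\sigma^L(u,d)$ couples $\nabla u$ with $\partial_t d$ through $N$, standard Caccioppoli iteration fails unless one exploits the precise cancellation encoded in $\mathcal{D}$, which is the algebraic identity underlying Theorem \ref{regularity}. A secondary difficulty is the no-neck property for (\ref{energy_id}), where $u$ must be controlled on small annuli without the Navier-Stokes tool-kit; this is handled by selecting $t_k$ with $\|\mathcal{D}(\cdot,t_k)\|_{L^1(\R^2)}\to 0$, which dominates annular contributions.
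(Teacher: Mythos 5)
Your overall scheme is the same as the paper's: mollify the data, use the local well-posedness of \cite{WZZ}, apply the global energy inequality and the $\epsilon$-regularity built from Theorem \ref{regularity}, run a Struwe-type bubbling analysis at the (finitely many) singular times, and then use energy quantization (\cite{qing}, \cite{lin-wang}) at $t=\infty$. Two points in your sketch are worth flagging, one a wrong turn and one a genuine gap.

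First, taking the Ginzburg--Landau penalisation \eqref{ES5} as the approximating scheme is precisely what one cannot do here: whether solutions of the penalised system converge, as $\epsilon\to0$, to solutions of the sharp-constraint system is highlighted in the introduction as an open problem, and the paper deliberately sidesteps it by mollifying the initial data (Lemma \ref{density} and the density of $C_0^\infty$ with $\nabla\cdot u_0^k=0$ in $\mathbf{H}$) and invoking \cite{WZZ} for smooth short-time solutions. Your alternative route (mollify $+$ \cite{WZZ}) is the correct one.

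Second, the step ``the rescaled dissipation forces $\|A^i_m\|_{L^2}$ and $\|N^i_m\|_{L^2}$ to vanish, hence $u^i_m\to 0$'' does not close: vanishing of $\nabla u_m^i$ in $L^2_{\rm loc}$ only tells you the $C^2_{\rm loc}$-limit $u_\infty$ is a constant vector, not that it is zero. You need an extra ingredient exploiting the scaling invariance of a supercritical-or-critical norm: the paper uses that $u\in L^4(\R^2\times[0,T_1])$ (from Ladyzhenskaya and the energy bound), so that
$\int_{P_R}|u_m^i|^4=\int_{t_m^i-R^2(r_m^i)^2}^{t_m^i}\int_{B_{Rr_m^i}(x_m^i)}|u|^4\to 0$ by absolute continuity of the integral, forcing $u_\infty\equiv 0$. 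Equivalently, one can note that $\|u_m^i(\cdot,t)\|_{L^2(\R^2)}=\|u(\cdot,\,\cdot\,)\|_{L^2(\R^2)}\le E_0^{1/2}$ is scale-invariant, which rules out a non-zero constant limit. Either way, the inference needs to be made explicit.

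Two smaller remarks: (a) for $(d_0)_3\ge 0$ in part (v) the maximum principle has to be applied to the smooth approximants $(u^k,d^k)$ (whose gradients are bounded on their time interval of existence) and then passed to the limit, rather than invoked directly on the possibly singular weak solution; you get the right conclusion but the order of operations matters. (b) Pairing \eqref{ES2}$_3$ with $-(\Delta d+|\nabla d|^2d)$ rather than $\Delta d$ is harmless since $\langle\partial_t d, d\rangle=0$, but be aware the paper's computation of the cancellation of the Leslie stress terms is done with $\Delta d$ as multiplier.

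Aside from these, the proposal is essentially the paper's proof.
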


An important first step to prove Theorem \ref{regularity} is to establish the decay lemma \ref{le:epsilon} under the small energy condition, which is proved by a blow-up argument.
Here the local energy inequality (\ref{local_energy_ineq1}) for suitable weak solutions to (\ref{ES2}) plays a very important role, which depends
on the conditions (\ref{necessary}), (\ref{parodi}), and (\ref{Leslie_condition}) heavily.
In contrast with earlier arguments developed by \cite{LLW} on the simplified nematic liquid crystal equation (\ref{ES4}),
where the limiting equation resulting from the blow up process is the linear Stokes equation and the linear heat equation,
the new linear system (\ref{lq3}) arising from the blow-up process of the general Ericksen-Leslie system (\ref{ES2}) is a coupling system.
It is an interesting question to establish its smoothness. The proof of regularity of (\ref{lq3}) is based on higher order local energy inequalities.
The cancelation properties among the coupling terms play critical roles in the argument of various local or global energy inequalities
 for both the linear system (\ref{lq3}) and the nonlinear system
(\ref{ES2}). The second step is to establish a higher integrability estimate of suitable weak solutions to (\ref{ES2}) under the small energy condition, which
is done by employing the techniques of Riesz potential estimates between parabolic Morrey spaces developed by \cite{HW} and \cite{Hine-Wang}. The third
step is to establish an arbitrary higher order energy estimate of (\ref{ES2}) under the small energy condition.  With the regularity theorem \ref{regularity}, we show
the existence theorem \ref{existence} by adapting the scheme developed by \cite{LLW}.

Motivated by the uniqueness theorem
proved by \cite{LLW} and \cite{XZ} on (\ref{ES4}), we believe that the weak solution obtained in Theorem \ref{existence}
is also unique in its own class and plan to address it in a forthcoming article.

The paper is organized as follows. In section two, we derive both local and global energy inequality for suitable weak solutions of (\ref{ES2}).
In section three, we prove an $\epsilon$-regularity theorem for (\ref{ES2}) first and then prove Theorem \ref{regularity}. In section four, we prove
the existence theorem \ref{existence}.

\setcounter{section}{1} \setcounter{equation}{1}
\section{Global and local energy inequalities of Ericksen-Leslie' system in $\R^2$}

In this section, we will establish both global and local energy inequality for suitable weak solutions of (\ref{ES2}) in $\R^2$ under the conditions
(\ref{necessary}), (\ref{parodi}), and (\ref{Leslie_condition}). We begin with the global energy inequality.

\begin{lemma}\label{global_energy_ineq} For $0<T\le +\infty$, assume the conditions (\ref{necessary}), (\ref{parodi}), and (\ref{Leslie_condition}) hold.
If $u\in L^\infty_tL^2_x\cap L^2_tH^1_x(\R^2\times [0,T], \R^2)$, $d\in L^\infty([0,T], H^1_{e_0}(\R^2,\mathbb S^2))
\cap L^2([0,T], H^2_{e_0}(\R^2, \mathbb S^2))$, and $P\in L^2(\R^2\times [0,T])$ is a suitable weak solution of
 the Ericksen-Leslie system (\ref{ES2}). Then for any $0\le t_1<t_2\le T$, it holds
\beq\label{global_energy_ineq1}
\int_{\R^2}(|u|^2+|\nabla d|^2)(t_2)+\int_{t_1}^{t_2}\int_{\R^2}\Big[\mu_4|\nabla u|^2+\frac2{|\lambda_1|}
|\Delta d+|\nabla d|^2 d|^2\Big]
\le \int_{\R^2}(|u|^2+|\nabla d|^2)(t_1).
\eeq
\end{lemma}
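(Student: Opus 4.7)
The strategy is the classical one for Ericksen-Leslie systems: combine the $L^2$-estimate for $u$ with the $H^1$-estimate for $d$, identify the Ericksen-stress contribution as cancelling the transport contribution from the director equation, and then reduce $\int\sigma^L(u,d):\nabla u$ together with an auxiliary substitution from the $d$-equation to a manifestly nonnegative quadratic form under Parodi's relation and the hypothesis (\ref{Leslie_condition}).

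Concretely, I first test the momentum equation by $u$ and use $\nabla\cdot u=0$ to kill the convective and pressure terms, obtaining
\begin{equation*}
\tfrac{1}{2}\tfrac{d}{dt}\|u\|_{L^2(\R^2)}^2=\int_{\R^2}(\nabla d\odot\nabla d):\nabla u-\int_{\R^2}\sigma^L(u,d):\nabla u.
\end{equation*}
Next, since $|d|\equiv 1$ forces $h:=\Delta d+|\nabla d|^2d$ to be orthogonal to $d$, I test the director equation by $h$, and after using $(d^TAd)\,d\cdot h=0$, obtain
\begin{equation*}
\tfrac{1}{2}\tfrac{d}{dt}\|\nabla d\|_{L^2(\R^2)}^2=\int_{\R^2}h\cdot(u\cdot\nabla d)-\int_{\R^2}h\cdot\Omega d+\tfrac{\lambda_2}{\lambda_1}\int_{\R^2}h\cdot Ad-\tfrac{1}{|\lambda_1|}\int_{\R^2}|h|^2.
\end{equation*}
Adding the two identities and integrating by parts, I use $\nabla\cdot u=0$ and $|d|\equiv 1$ to verify the identity $\int(\nabla d\odot\nabla d):\nabla u+\int h\cdot(u\cdot\nabla d)=0$, so the convective Ericksen contribution drops out.

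The crux is the explicit evaluation of $\int\sigma^L:\nabla u$. Splitting $\nabla u=A+\Omega$ and applying both (\ref{necessary}) and Parodi's relation (in the form $\mu_2+\mu_3=\mu_6-\mu_5=-\lambda_2$), a direct computation yields
\begin{equation*}
\int_{\R^2}\sigma^L:\nabla u=\mu_1\!\int(d^TAd)^2+\mu_4\!\int|A|^2+(\mu_5+\mu_6)\!\int|Ad|^2+\lambda_1\!\int N\cdot\Omega d+\lambda_2\!\int Ad\cdot\Omega d-\lambda_2\!\int N\cdot Ad.
\end{equation*}
I then rewrite the $d$-equation as $N=\tfrac{1}{|\lambda_1|}h+\tfrac{\lambda_2}{|\lambda_1|}\bigl(Ad-(d^TAd)d\bigr)$ and substitute this into the last three terms above. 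Thanks to $\lambda_1<0$, the mixed terms $\int h\cdot\Omega d$, $\int Ad\cdot\Omega d$, and $\int h\cdot Ad$ cancel exactly against their counterparts already present, leaving
\begin{equation*}
\tfrac{1}{2}\tfrac{d}{dt}\bigl(\|u\|_{L^2}^2+\|\nabla d\|_{L^2}^2\bigr)+\bigl(\mu_1-\tfrac{\lambda_2^2}{\lambda_1}\bigr)\!\int(d^TAd)^2+\mu_4\!\int|A|^2+\bigl(\mu_5+\mu_6+\tfrac{\lambda_2^2}{\lambda_1}\bigr)\!\int|Ad|^2+\tfrac{1}{|\lambda_1|}\!\int|h|^2=0.
\end{equation*}

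Under (\ref{Leslie_condition}), each of $\mu_1-\lambda_2^2/\lambda_1$, $\mu_4>0$, and $\mu_5+\mu_6+\lambda_2^2/\lambda_1$ is nonnegative; dropping the two nonnegative tensorial terms and using $\int|A|^2=\tfrac{1}{2}\int|\nabla u|^2$ (which follows from $\nabla\cdot u=0$ via integration by parts on $\int\partial_iu_j\,\partial_ju_i$) yields the pointwise-in-time dissipation bound, whose integration from $t_1$ to $t_2$ is exactly (\ref{global_energy_ineq1}). The main obstacle will be the algebraic bookkeeping: one has to split $\sigma^L$ carefully into symmetric and antisymmetric parts against $A$ and $\Omega$, apply Parodi's identity precisely at the point where it is needed, and track the sign flip from $\lambda_1<0$ when eliminating $N$. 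All integrations by parts and the chain-rule formula for $\tfrac{d}{dt}\|\nabla d\|_{L^2}^2$ are justified by the integrability built into the definition of a suitable weak solution.
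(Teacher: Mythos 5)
Your proposal is correct and follows essentially the same route as the paper: test the momentum equation by $u$, test the director equation and pair with the second-order term, evaluate $\int\sigma^L(u,d):\nabla u$ using (\ref{necessary}) and Parodi's relation, substitute $N$ back from the $d$-equation to trigger the crucial cancellations of the mixed $h\cdot\Omega d$ and $h\cdot Ad$ terms, and invoke (\ref{Leslie_condition}) to discard the remaining nonnegative quadratic forms together with $\int|A|^2=\tfrac12\int|\nabla u|^2$. The only cosmetic difference is that you pair the director equation with $h=\Delta d+|\nabla d|^2 d$ directly (exploiting $h\perp d$ from the outset) whereas the paper multiplies by $\Delta d$ and carries an extra $(\hat d^TA\hat d)|\nabla d|^2$ term that is cancelled a few lines later; the algebraic content and the role of each hypothesis are identical.
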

\begin{proof} Since $|d|=1$, we have $|\nabla d|^2+\langle d,\Delta d\rangle=0$. This, combined with the fact
$d\in  L^2([0,T], H^2_{e_0}(\R^2, \mathbb S^2))$, implies that $\nabla d\in L^4(\R^2\times [0,T])$. Since $u\in L^\infty_tL^2_x\cap L^2_tH^1_x(\R^2\times [0,T], \R^2)$,
it follows from Ladyzhenskaya's inequality that $u\in L^4(\R^2\times [0,T])$.

For $\eta\in C_0^\infty(\R^2)$ or $\eta\equiv 1$ in $\R^2$, multiplying (\ref{ES2})$_1$ by $u\eta^2$, integrating the resulting equation over $\R^2$, and using $\nabla\cdot u=0$,  we obtain
\begin{eqnarray}\label{sigma_est1}
\frac{d}{dt}\int_{\R^2}|u|^2\eta^2&=& 2\int_{\R^2} \eta^2\left[\nabla d\odot\nabla d:\nabla u
-\sigma^L(u,d):\nabla u\right]\nonumber\\
&&+\int_{\R^2}\left[(|u|^2+2P) u\cdot\nabla(\eta^2)+2\left(\nabla d\odot\nabla d-\sigma^L(u,d)\right):u\otimes\nabla (\eta^2)\right].
\end{eqnarray}
Using (\ref{necessary}), (\ref{parodi}), the symmetry of $A$, and the skew-symmetry of $\Omega$, we find
\begin{eqnarray}\label{sigma_est2}
&&\int_{\R^2}\eta^2\sigma^L(u,d):\nabla u\nonumber\\
&=&\int_{\R^2}\eta^2\Big[\mu_1 \hat{d}_k\hat{d}_pA_{kp}\hat{d}_i\hat{d}_j +\mu_2\hat{N}_i \hat{d}_j+\mu_3 \hat{N}_j \hat{d}_i+\mu_4 A_{ij}
+\mu_5 A_{ik}\hat{d}_k \hat{d}_j
+\mu_6 A_{jk}\hat{d}_k \hat{d}_i\Big](A_{ij}+\Omega_{ij})\nonumber\\
&=&\int_{\R^2}\eta^2\Big[\mu_1|A:{\hat d}\otimes {\hat d}|^2+\mu_4 |A|^2+(\mu_2+\mu_3)\hat{d}\cdot(A\cdot\hat{N})
+(\mu_2-\mu_3)\hat{d}\cdot(\Omega\cdot\hat{N})\nonumber\\
&&\qquad+ (\mu_5+\mu_6)|A\cdot \hat d|^2+(\mu_5-\mu_6)(A\cdot\hat{d})(\Omega\cdot\hat{d})\Big]\nonumber\\
&=&\int_{\R^2}\eta^2\Big[\mu_1|A:{\hat d}\otimes {\hat d}|^2+\mu_4 |A|^2+ (\mu_5+\mu_6)|A\cdot \hat d|^2\nonumber\\
&&\qquad+\lambda_1\hat{N}\cdot(\Omega\cdot\hat{d})-\lambda_2 \hat{N}\cdot(A\cdot{\hat d})+\lambda_2 (A\cdot\hat{d})(\Omega\cdot\hat{d})\Big].
\end{eqnarray}
Putting (\ref{sigma_est2}) into (\ref{sigma_est1}), we have
\begin{eqnarray}
\frac{d}{dt}\int_{\R^2}|u|^2\eta^2&=& 2\int_{\R^2} \eta^2\Big[\nabla d\odot\nabla d:\nabla u
-\mu_1|A:{\hat d}\otimes {\hat d}|^2-\mu_4 |A|^2- (\mu_5+\mu_6)|A\cdot \hat d|^2\nonumber\\
&&\qquad-\lambda_1\hat{N}\cdot(\Omega\cdot\hat{d})+\lambda_2 \hat{N}\cdot(A\cdot{\hat d})-\lambda_2 (A\cdot\hat{d})(\Omega\cdot\hat{d})\Big]\nonumber\\
&&+\int_{\R^2}\left[(|u|^2+2P) u\cdot\nabla(\eta^2)+2\left(\nabla d\odot\nabla d-\sigma^L(u,d)\right):u\otimes\nabla (\eta^2)\right].\label{sigma_est3}
\end{eqnarray}
Since $(\Delta d+|\nabla d|^2 d)\in L^2(\R^2\times [0,T])$, it follows from the equation $(\ref {ES2})_3$ that $(\partial_t d+u\cdot\nabla d)\in
L^2(\R^2\times [0,T])$. Multiplying $(\ref{ES2})_3$ by $\eta^2\Delta d$ and integrating the resulting equation
over $\R^2$ yields that
\beq
    \label{d local}
    &&\frac{d}{dt}\int_{B_1}\frac12|\nabla d|^2\eta^2+\int_{B_1}\frac{1}{|\lambda_1|}|\Delta d+|\nabla d|^2d|^2\eta^2\nonumber\\
&=&-\int_{\R^2}\langle\partial_t d, \nabla d\rangle\cdot\nabla(\eta^2)
+\int_{\R^2}\eta^2\left[\langle u\cdot\nabla d, \Delta d\rangle+\langle \frac{\lambda_2}{\lambda_1}A\hat d
-\Omega\hat d, \Delta \hat d\rangle +\frac{\lambda_2}{\lambda_1}(\hat{d}^TA\hat{d})|\nabla d|^2\right].
\eeq
Using integration by parts and $\nabla\cdot u=0$, we see
\beq
\int_{\R^2}\eta^2\langle u\cdot\nabla d, \Delta d\rangle
=-\int_{\R^2}\eta^2\nabla d\odot\nabla d:\nabla u+\int_{\R^2}\left[\frac12|\nabla d|^2 u\cdot\nabla(\eta^2)
-\nabla d\odot\nabla d: u\otimes \nabla(\eta^2)\right].\nonumber
\eeq
Substituting this into (\ref{d local}), we obtain
\beq\label{d_local1}
 &&\frac{d}{dt}\int_{B_1}\frac12|\nabla d|^2\eta^2+\int_{B_1}\frac{1}{|\lambda_1|}|\Delta d+|\nabla d|^2d|^2\eta^2\nonumber\\
&=&\int_{\R^2}\eta^2\left[\langle \frac{\lambda_2}{\lambda_1}A\hat d
-\Omega\hat d, \Delta \hat d\rangle +\frac{\lambda_2}{\lambda_1}(\hat{d}^TA\hat{d})|\nabla d|^2
-\eta^2\nabla d\odot\nabla d:\nabla u\right]\nonumber\\
&+&\int_{\R^2}\left[\frac12|\nabla d|^2 u\cdot\nabla(\eta^2)
-\nabla d\odot\nabla d: u\otimes \nabla(\eta^2)-\langle\partial_t d, \nabla d\rangle\cdot\nabla(\eta^2)\right].
\eeq
Adding (\ref{sigma_est3}) together with (\ref{d_local1}), we obtain
\begin{eqnarray}\label{u-d-estimate1}
&&\frac{d}{dt}\int_{\R^2}(|u|^2+|\nabla d|^2)\eta^2
+2\int_{\R^2}\left[\mu_4|A|^2+\frac{1}{|\lambda_1|}|\Delta d+|\nabla d|^2d|^2\right]\eta^2\nonumber\\
&=&-2\int_{\R^2} \eta^2\Big[\mu_1|A:{\hat d}\otimes {\hat d}|^2 +(\mu_5+\mu_6)|A\cdot \hat d|^2
+\lambda_1\hat{N}\cdot(\Omega\cdot\hat{d})-\lambda_2 \hat{N}\cdot(A\cdot{\hat d})\nonumber\\
&&\qquad\ \ \ \ \ \ \ +\lambda_2 (A\cdot\hat{d})(\Omega\cdot\hat{d})
-\langle \frac{\lambda_2}{\lambda_1}A\hat d
-\Omega\hat d, \Delta \hat d\rangle -\frac{\lambda_2}{\lambda_1}(\hat{d}^TA\hat{d})|\nabla d|^2\Big]
\nonumber\\
&&+\int_{\R^2}\left[(|u|^2+2P) u\cdot\nabla(\eta^2)+2\left(\nabla d\odot\nabla d-\sigma^L(u,d)\right):u\otimes\nabla (\eta^2)\right]\nonumber\\
&&+\int_{\R^2}\left[|\nabla d|^2 u\cdot\nabla(\eta^2)
-2\nabla d\odot\nabla d: u\otimes \nabla(\eta^2)-2\langle\partial_t d, \nabla d\rangle\cdot\nabla(\eta^2)\right].
\end{eqnarray}
Denote the first term in the right hand side of (\ref{u-d-estimate1}) as {\bf I}.
In order to estimate {\bf I}, we need to use (\ref{ES2})$_3$ to make crucial cancelations among terms of
{\bf I} as follows.
\beq\label{cancel1}
\lambda_1\hat{N}\cdot(\Omega\cdot\hat{d})&=&\lambda_1\hat{N}_i\Omega_{ij}\hat{d_j}\nonumber\\
&=&\left[-\lambda_2A_{ik}\hat{d}_k-\Delta \hat d_i+\lambda_2(\hat{d}^TA\hat d)\hat d_i)\right]\Omega_{ij}\hat d_j
\nonumber\\
&=&\left[-\lambda_2A_{ik}\hat{d}_k-\Delta \hat d_i\right]\Omega_{ij}\hat d_j
=-\lambda_2 (A\cdot\hat d)(\Omega\cdot\hat d)-\langle \Omega \hat d, \Delta\hat d\rangle ,
\eeq
while
\beq\label{cancel2}
-\lambda_2 \hat{N}\cdot(A\cdot{\hat d})
&=&-\lambda_2 \hat{N}_i A_{ij} \hat{d}_j\nonumber\\
&=& -\lambda_2\left[-\frac{\lambda_2}{\lambda_1}A_{ij}\hat d_j-\frac{1}{\lambda_1}(\Delta \hat d_i+|\nabla d|^2\hat d_i)+\frac{\lambda_2}{\lambda_1}(\hat d^T A \hat d)\hat d_i\right]A_{ij}\hat d_j\nonumber\\
&=&\frac{\lambda_2^2}{\lambda_1}|A\cdot\hat d|^2-\frac{\lambda_2^2}{\lambda_1}|\hat d^T Ad|^2
+\frac{\lambda_2}{\lambda_1}\langle A\hat d,\Delta \hat d\rangle +\frac{\lambda_2}{\lambda_1}(\hat d^T A \hat d)|\nabla d|^2.
\eeq
Since $A:\hat d\otimes \hat d=\hat d^T A\hat d$, we have, by substituting (\ref{cancel1}) and (\ref{cancel2}) into
{\bf I},
that
\beq\label{I-estimate}
{\mathbf{I}}=-2\int_{\R^2}\Big[\left(\mu_1-\frac{\lambda_2^2}{\lambda_1}\right)|A:\hat d\otimes\hat d|^2
+\left(\mu_5+\mu_6+\frac{\lambda^2_2}{\lambda_1}\right)|A\cdot\hat d|^2\Big].
\eeq
Substituting (\ref{I-estimate}) into (\ref{u-d-estimate1}), we finally obtain
\begin{eqnarray}\label{u-d-estimate2}
&&\frac{d}{dt}\int_{\R^2}(|u|^2+|\nabla d|^2)\eta^2
+\int_{\R^2}\left[\mu_4|\nabla u|^2+\frac{2}{|\lambda_1|}|\Delta d+|\nabla d|^2d|^2\right]\eta^2\nonumber\\
&=&-2\int_{\R^2}\eta^2\Big[\left(\mu_1-\frac{\lambda_2^2}{\lambda_1}\right)|A:\hat d\otimes\hat d|^2
+\left(\mu_5+\mu_6+\frac{\lambda^2_2}{\lambda_1}\right)|A\cdot\hat d|^2\Big]
\nonumber\\
&&+\int_{\R^2}\left[(|u|^2+2P) u\cdot\nabla(\eta^2)+2\left(\nabla d\odot\nabla d-\sigma^L(u,d)\right):u\otimes\nabla (\eta^2)\right]\nonumber\\
&&+\int_{\R^2}\left[|\nabla d|^2 u\cdot\nabla(\eta^2)
-2\nabla d\odot\nabla d: u\otimes \nabla(\eta^2)-2\langle\partial_t d, \nabla d\rangle\cdot\nabla(\eta^2)+\mu_4
\langle u\cdot\nabla u,\nabla(\eta^2)\rangle \right].
\end{eqnarray}
Here we have used the fact $\nabla\cdot u=0$ and the following identity:
\beq\label{u-A-identity}
\int_{\R^2} |A|^2\eta^2=\frac12\int_{\R^2}|\nabla u|^2\eta^2-\frac12\int_{\R^2}\langle (u\cdot\nabla) u, \nabla(\eta^2)
\rangle.
\eeq
If $\eta\equiv 1$, then (\ref{u-d-estimate2}) and the condition (\ref{Leslie_condition}) imply
\beq\label{global_energy_ineq2}
&&\frac{d}{dt}\int_{\R^2}(|u|^2+|\nabla d|^2)
+\int_{\R^2}\left[\mu_4|\nabla u|^2+\frac{2}{|\lambda_1|}|\Delta d+|\nabla d|^2d|^2\right]\nonumber\\
&=&-2\int_{\R^2}\Big[\left(\mu_1-\frac{\lambda_2^2}{\lambda_1}\right)|A:\hat d\otimes\hat d|^2
+\left(\mu_5+\mu_6+\frac{\lambda^2_2}{\lambda_1}\right)|A\cdot\hat d|^2\Big]
\le 0.
\eeq
Integrating (\ref{global_energy_ineq2}) over $0\le t_1\le t_2\le T$ yields
(\ref{global_energy_ineq1}).
This completes the proof of lemma \ref{global_energy_ineq}.
\end{proof}

We also need the following local energy inequality in the proofs of our main theorems.
\begin{lemma}\label{local_energy_ineq}
 For $0<T\le +\infty$, assume the conditions (\ref{necessary}), (\ref{parodi}), and (\ref{Leslie_condition}) hold.
If $u\in L^\infty_tL^2_x\cap L^2_tH^1_x(\R^2\times [0,T], \R^2)$, $d\in L^\infty([0,T], H^1_{e_0}(\R^2,\mathbb S^2))
\cap L^2([0,T], H^2_{e_0}(\R^2, \mathbb S^2))$, and $P\in L^2(\R^2\times [0,T])$ is a suitable weak solution
of the Ericksen-Leslie system (\ref{ES2}). Then for any $0\le t_1<t_2\le T$ and $\eta\in C_0^\infty(\R^2)$, it holds
\beq\label{local_energy_ineq1}
&&\int_{\R^2}\eta^2(|u|^2+|\nabla d|^2)(t_2)+\int_{t_1}^{t_2}\int_{\R^2}\eta^2\Big[\mu_4|\nabla u|^2+\frac2{|\lambda_1|}
|\Delta d+|\nabla d|^2 d|^2\Big]\nonumber\\
&\le& \int_{\R^2}\eta^2(|u|^2+|\nabla d|^2)(t_1)
+C\int_{t_1}^{t_2}\int_{\R^2}\Big[(|u|^2+|\nabla u|+|u||\nabla d|+|\nabla d|^2+|\nabla^2 d|+|P|)|u||\nabla(\eta^2)|\nonumber\\
&&\qquad+(|\nabla u|+|u||\nabla d|+|\nabla d|^2+|\nabla^2 d|)|\nabla d||\nabla(\eta^2)|\Big].
\eeq
\end{lemma}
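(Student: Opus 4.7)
My plan is to read off inequality (\ref{local_energy_ineq1}) directly from the cutoff calculation already performed in the proof of Lemma \ref{global_energy_ineq}. Indeed, the identity (\ref{u-d-estimate2}) was derived there for an arbitrary $\eta\in C_0^\infty(\R^2)$ and has the schematic form
\[
\frac{d}{dt}\int_{\R^2}\eta^2(|u|^2+|\nabla d|^2)+\int_{\R^2}\eta^2\Bigl[\mu_4|\nabla u|^2+\tfrac{2}{|\lambda_1|}|\Delta d+|\nabla d|^2 d|^2\Bigr]=\mathbf{I}+\mathbf{II},
\]
where $\mathbf{I}$ is the bulk contribution computed in (\ref{I-estimate}), pointwise non-positive under (\ref{Leslie_condition}), and $\mathbf{II}$ is the sum of all boundary terms, each of which carries a factor of $\nabla(\eta^2)$. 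Therefore (\ref{local_energy_ineq1}) follows once I integrate this identity on $[t_1,t_2]$, discard the non-positive $\mathbf{I}$, and bound every integrand inside $\mathbf{II}$ pointwise by the integrand on the right-hand side of (\ref{local_energy_ineq1}).

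The only terms appearing in $\mathbf{II}$ that are not already of the required form involve $\sigma^L(u,d)$ and $\partial_t d$. From the definition (\ref{reduction2}) combined with (\ref{reduction1}) I obtain the pointwise bound $|\sigma^L(u,d)|\lesssim |\nabla u|+|N|$, while solving (\ref{ES2})$_3$ for $\partial_t d$ and using $|d|=1$ yields
\[
|\partial_t d|+|N|\lesssim |\nabla u|+|u||\nabla d|+|\nabla^2 d|+|\nabla d|^2,
\]
so in particular $|\sigma^L(u,d)|\lesssim |\nabla u|+|u||\nabla d|+|\nabla^2 d|+|\nabla d|^2$. Substituting these pointwise estimates into each of the boundary integrals in $\mathbf{II}$ (those involving $(|u|^2+2P)u$, $(\nabla d\odot\nabla d):u\otimes\nabla(\eta^2)$, $\sigma^L(u,d):u\otimes\nabla(\eta^2)$, $|\nabla d|^2u$, $\langle u\cdot\nabla u,\nabla(\eta^2)\rangle$, and $\langle\partial_t d,\nabla d\rangle\cdot\nabla(\eta^2)$), together with the trivial pointwise bound $|\nabla d\odot\nabla d|\le |\nabla d|^2$, produces exactly the integrand on the right-hand side of (\ref{local_energy_ineq1}).

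The cancellations (\ref{cancel1}) and (\ref{cancel2}) which reduce $\mathbf{I}$ to its final non-positive form (\ref{I-estimate}) are purely algebraic and pointwise, so they carry through unchanged in the presence of the cutoff $\eta^2$; in particular, no new cancellation has to be discovered for the local version. The only step that requires anything beyond bookkeeping is therefore the use of (\ref{ES2})$_3$ to absorb $|\partial_t d|$ and $|N|$ into spatial derivatives of $u$ and $d$, and I expect this to be the main (quite modest) obstacle: one needs to verify that the resulting products of $|u|,\ |\nabla u|,\ |\nabla d|,\ |\nabla^2 d|,\ |P|$ indeed fall into the two families displayed on the right-hand side of (\ref{local_energy_ineq1}). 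Beyond this substitution everything is Cauchy-Schwarz.
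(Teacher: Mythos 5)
Your proposal is correct and follows essentially the same route as the paper: start from identity (\ref{u-d-estimate2}) derived with a general cutoff $\eta$, drop the sign-definite bulk term, and control the boundary terms pointwise after using (\ref{ES2})$_3$ to bound $|\partial_t d|$, $|N|$, and hence $|\sigma^L(u,d)|$ by $|\nabla u|+|u||\nabla d|+|\nabla^2 d|+|\nabla d|^2$. The only (purely cosmetic) overstatement is the closing remark about Cauchy--Schwarz; the paper's proof, like yours in substance, requires nothing beyond these pointwise bounds and integration in time.
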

\begin{proof} It suffices to estimate the last two terms in the right hand of (\ref{u-d-estimate2}).
Denote these two terms by {\bf II} and {\bf III}.  To do so, first observe
that by (\ref{ES2})$_3$ it holds that
$$|\partial_t d|\le C(|u||\nabla d|+|\nabla u|+|\nabla^2 d|+|\nabla d|^2),$$
and hence
$$|\sigma^L(u,d)|\le C(|A|+|N|)\le C(|\nabla u|+|\partial_t d|+|u||\nabla d|)
\le C(|\nabla u|+|u||\nabla d|+|\nabla^2 d|+|\nabla d|^2).$$
With these estimates, we can show that
\beq\nonumber
|{\bf II}|&\le&
C\int_{\R^2}(|u|^2+|P|+|\nabla d|^2+|\sigma^L(u,d)|)|u||\nabla(\eta^2)|\\
&\le& C\int_{\R^2}(|u|^2+|P|+|\nabla d|^2+|u||\nabla d|+|\nabla u|+|\nabla^2 d|)|u||\nabla(\eta^2)|,
\nonumber
\eeq
and
\beq\nonumber
|{\bf III}|&\le&
C\int_{\R^2}(|\nabla d|^2|u|+|\partial_t d||\nabla d|+|u||\nabla u|)|\nabla(\eta^2)|\\
&\le& C\int_{\R^2}\left(|\nabla d|^2|u|+|u||\nabla u|+(|\nabla u|+|u||\nabla d|+|\nabla d|^2+|\nabla^2 d|)|\nabla d|\right)
|\nabla(\eta^2)|.
\nonumber
\eeq
Putting these estimates of {\bf II} and {\bf III} into (\ref{u-d-estimate2}) yields (\ref{local_energy_ineq1}).
This completes the proof.
\end{proof}

\setcounter{section}{2} \setcounter{equation}{2}
\section{$\epsilon$-regularity of the Ericksen-Leslie system (\ref{ES2}) in $\R^2$}

In this section, we will establish the regularity of suitable weak solutions of the Ericksen-Leslie system (\ref{ES2}) in $\R^2$, under a smallness condition.
The crucial step is the following decay lemma under the smallness condition.

\begin{lemma}\label{le:epsilon} Assume that the conditions (\ref{necessary}),
(\ref{parodi}), and (\ref{Leslie_condition}) hold. There exist $\epsilon_0>0$ and $\theta_0\in (0, \frac12)$ such that  for $0<T<+\infty$ and a bounded domain $O\subset\R^2$,
if $u\in L^\infty_t L^2_x\cap L^2_t H^1_x(O\times [0, T], \R^2)$,
$d\in L^\infty([0,T],  H^1(O,\mathbb S^2))\cap L^2([0,T],  H^2(O, \mathbb S^2))$, and $P\in L^2(O\times [0,T])$
is a suitable weak solution of the Ericksen-Leslie system (\ref{ES2}) in $O\times [0,T]$,  which satisfies, for some $z_0=(x_0,t_0)\in O\times (0,T]$ and
$0<r_0\le \min\{d(x_0,\partial O), \sqrt{t_0}\}$,
\beq
    \label{small_condition}
    \nonumber
    \Phi(u,d,P,z_0, r)\leq \epsilon_0,
\eeq
then
\beq\label{decay_ineq}
    \Phi(u,d,P, z_0, \theta_0r)\leq
    \frac12 \Phi(u,d,P, z_0, r).
\eeq
Here we denote
\beqno
    &&\Phi(u,d,P,z_0, r)\\
&&:=\left(\int_{P_r(z_0)}|u|^4\right)^{\frac14}+\left(\int_{P_r(z_0)}|\nabla u|^2\right)^{\frac12}+\left(\int_{P_r(z_0)}|\nabla d|^4\right)^{\frac14}+\left(\int_{P_r(z_0)}|\Delta d|^2\right)^{\frac12}+\left(\int_{P_r(z_0)}|P|^2\right)^{\frac12}.
\eeqno
\end{lemma}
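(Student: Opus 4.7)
The plan is to argue by contradiction, in the spirit of the proofs of $\epsilon$-regularity for harmonic map heat flow and Navier--Stokes. Fix $\theta_0 \in (0,\tfrac12)$ to be chosen later. If the conclusion fails, there exist $\epsilon_k \downarrow 0$, suitable weak solutions $(u_k,d_k,P_k)$ of (\ref{ES2}) on some $O_k \times [0,T_k]$, and parabolic points $z_k = (x_k,t_k)$ with admissible radii $r_k$ such that
\[
\Phi(u_k,d_k,P_k,z_k,r_k) = \epsilon_k, \qquad \Phi(u_k,d_k,P_k,z_k,\theta_0 r_k) > \tfrac12 \epsilon_k.
\]
Using the parabolic scaling $(x,t)\mapsto (x_k+r_k y, t_k+r_k^2 s)$, which is an exact symmetry of (\ref{ES2}), together with the amplitude normalization
\[
v_k(y,s) = \frac{r_k\, u_k}{\epsilon_k}, \quad \delta_k(y,s) = \frac{d_k - e_k}{\epsilon_k}, \quad Q_k(y,s) = \frac{r_k^2\, P_k}{\epsilon_k},
\]
where $e_k$ is the average of $d_k$ over $P_{r_k}(z_k)$, we reduce to solutions on $P_1$ with $\Phi(v_k,\delta_k,Q_k,0,1)=1$ and $\Phi(v_k,\delta_k,Q_k,0,\theta_0) > \tfrac12$. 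Since $|d_k|=1$, a subsequence of $e_k$ converges to some $e_\infty \in \mathbb{S}^2$.

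\textbf{Limit system and its regularity.} Direct substitution into (\ref{ES2}) and multiplication by $r_k^2/\epsilon_k$ shows that the nonlinearities $u\cdot\nabla u$, $\nabla\cdot(\nabla d \odot \nabla d)$, $|\nabla d|^2 d$, $(d^T A d) d$, and the quadratic-in-$d$ pieces of $\sigma^L$ carry explicit factors of $\epsilon_k$, while the terms linear in $(v_k,\nabla v_k,\nabla \delta_k, \Delta \delta_k, Q_k)$ survive. Together with the uniform bounds furnished by $\Phi=1$ and the local energy inequality of Lemma~\ref{local_energy_ineq}, standard weak-compactness arguments (including a Bogovskii-type recovery of $Q_k$ from the pressure equation) extract a weak limit $(v_\infty,\delta_\infty,Q_\infty)$ solving the linear constant-coefficient system of the form (\ref{lq3}):
\begin{align*}
\partial_s v_\infty + \nabla Q_\infty &= \nabla\!\cdot\! \sigma^L_{e_\infty}(v_\infty,\delta_\infty), \\
\nabla\!\cdot\! v_\infty &= 0, \\
\partial_s \delta_\infty - \Omega(v_\infty) e_\infty + \tfrac{\lambda_2}{\lambda_1} A(v_\infty) e_\infty &= \tfrac{1}{|\lambda_1|}\Delta \delta_\infty + \tfrac{\lambda_2}{\lambda_1}\bigl(e_\infty^T A(v_\infty) e_\infty\bigr) e_\infty.
\end{align*}
The conditions (\ref{necessary}), (\ref{parodi}), (\ref{Leslie_condition}) transfer to the linear problem and, exactly as in the proof of Lemma~\ref{global_energy_ineq}, yield a dissipative local energy identity. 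Differentiating the system and reapplying the local energy inequality to $\partial_s^k \nabla^\ell(v_\infty,\delta_\infty)$ produces interior smoothness; by parabolic Campanato/Morrey estimates the limit then obeys the decay
\[
\Phi(v_\infty,\delta_\infty,Q_\infty,0,\theta) \le C_0 \theta^{\alpha} \Phi(v_\infty,\delta_\infty,Q_\infty,0,\tfrac12) \le C_0 \theta^\alpha
\]
for some universal $\alpha > 0$ and $C_0>0$.

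\textbf{Strong convergence and contradiction.} Choosing $\theta_0$ so small that $2 C_0 \theta_0^\alpha < \tfrac14$, it remains to upgrade weak convergence to strong convergence strong enough to pass to the limit in each summand of $\Phi(v_k,\delta_k,Q_k,0,\theta_0)$. Boundedness of $(v_k)$ in $L^\infty_s L^2_y\cap L^2_s H^1_y$ combined with the rescaled equation for $\partial_s v_k$ (which lies in a negative Sobolev space plus $L^1$) gives $v_k \to v_\infty$ in $L^4(P_{3/4})$ via Aubin--Lions; the analogous statement for $\delta_k$ gives $\nabla\delta_k \to \nabla\delta_\infty$ in $L^4$ after one additional derivative. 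The genuinely delicate part — and the main obstacle — is obtaining strong $L^2(P_{\theta_0})$ convergence of $\nabla v_k$, $\Delta\delta_k$, and $Q_k$: these are controlled only at the critical scaling level by $\Phi$, and the quadratic Leslie stress $\sigma^L(u,d)$ together with the pressure must be handled simultaneously. This is achieved by testing the difference of the rescaled equation and the limit equation against $(v_k - v_\infty, \delta_k - \delta_\infty)$ cut off by a cylinder $P_{3/4}$, using the dissipation structure encoded in the key cancellations (\ref{cancel1})--(\ref{I-estimate}), and recovering $Q_k - Q_\infty$ through a localized Stokes estimate applied to the pressure equation. Once this is done, $\Phi(v_k,\delta_k,Q_k,0,\theta_0) \to \Phi(v_\infty,\delta_\infty,Q_\infty,0,\theta_0) \le C_0\theta_0^\alpha < \tfrac14$, contradicting $\Phi(v_k,\delta_k,Q_k,0,\theta_0)>\tfrac12$ and completing the proof.
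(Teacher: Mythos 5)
Your blow-up/compactness framework (contradiction hypothesis, parabolic rescaling, amplitude normalization by $\epsilon_k$, identification of the constant-coefficient limit system, smoothness and $C\theta$-decay of the limit via the higher-order local energy argument of Lemma~\ref{le:epsilon2}) matches the paper's proof in all but the final step. Where you diverge is in how the contradiction is closed, and that is where I see a gap.

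The paper does \emph{not} attempt to upgrade $\nabla\widetilde u_i$, $\nabla^2\widetilde d_i$, $\widetilde P_i$ to strong $L^2$ convergence. Instead it rescales the local energy inequality \eqref{local_energy_ineq1} to obtain a Caccioppoli-type estimate \eqref{local_energy_ineq2} for the sequence $(\widetilde u_i,\widetilde d_i,\widetilde P_i)$ directly, with cut-off supported in $B_{2\theta}$. The crucial structural feature is that every term in the error $\mathbf{E}_i$ is a product of a highest-order factor (which only converges \emph{weakly} in $L^2$) with one of the lowest-order factors $\widetilde u_i$ or $\nabla\widetilde d_i$ (which converge \emph{strongly} in $L^2$ by Aubin--Lions) against the bounded function $|\nabla(\eta^2)|$. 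Hence $\mathbf{E}_i\to\mathbf{E}$, and $|\mathbf{E}|\lesssim\Phi(\widetilde u,\widetilde d,\widetilde P,(0,0),2\theta)^2\lesssim\theta^2$ by the smooth decay of the limit. Combined with a good time slice chosen by Fubini, a Ladyzhenskaya argument, and a harmonic/Calder\'on--Zygmund splitting of the pressure, this gives $\Phi(\widetilde u_i,\widetilde d_i,\widetilde P_i,(0,0),\theta^2)\le C[\theta+\epsilon_i+\theta^{-1}o(1)]$, which is $<\tfrac14$ after choosing $\theta$ small and then $i$ large. Note the contradiction is obtained at the scale $\theta^2$, not $\theta$, and nowhere is strong convergence of the highest-order quantities used.

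Your plan instead requires showing $\nabla v_k\to\nabla v_\infty$, $\Delta\delta_k\to\Delta\delta_\infty$, $Q_k\to Q_\infty$ strongly in $L^2(P_{\theta_0})$ so that $\Phi(v_k,\delta_k,Q_k,0,\theta_0)\to\Phi(v_\infty,\delta_\infty,Q_\infty,0,\theta_0)$, and you propose to get this from an energy estimate for the difference $(v_k-v_\infty,\delta_k-\delta_\infty)$ using the cancellations \eqref{cancel1}--\eqref{I-estimate}. Those cancellations control the leading dissipative part $\widetilde\sigma^L(v_k-v_\infty,e_\infty):\nabla(v_k-v_\infty)$, but the difference of the full rescaled stress, $\epsilon_k^{-1}\sigma^L(u_k,d_k)-\widetilde\sigma^L(v_\infty,e_\infty)$, also carries a correction of the form $\bigl(\hat d_k\otimes\hat d_k-\hat e_\infty\otimes\hat e_\infty\bigr)\,\#\,\widetilde A_k$ (and analogous $\widetilde N_k$-corrections), which when paired with $\nabla(v_k-v_\infty)\eta^2$ produces a weak$\times$weak product: $\widetilde A_k$ and $\nabla(v_k-v_\infty)$ both converge only weakly in $L^2$, while the pointwise factor $d_k-e_\infty$ is small only a.e.\ (and in $L^2_tL^\infty_x$, not $L^\infty$), so the resulting triple product is controlled only by $\|d_k-e_\infty\|_{L^2_tL^\infty_x}$ against a quantity requiring $\widetilde A_k$ in a better-than-$L^2_tL^2_x$ space that is not available. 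Without a uniform-integrability or concentration-compactness argument, these remainders are not shown to vanish, and the claimed strong $L^2$ convergence at the critical level is not established. The paper's route is structured precisely to avoid this weak$\times$weak issue (all error products there are weak$\times$strong), so if you want to pursue your variant you would need a separate argument to handle these remainders, or you should switch to the Caccioppoli/iteration strategy the paper uses. A minor additional omission: the limit director field lies in the tangent space, $\delta_\infty\in T_{e_\infty}\mathbb S^2$ a.e., a fact the regularity argument for the linear system actually uses (to kill $\langle e_\infty,\Delta(\delta_\infty)_{x_i}\rangle$); your sketch does not record it.
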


\begin{proof} First observe that since (\ref{ES2}) is invariant under translations and dilations, we have that
$$u_r(x,t)=r u(x_0+rx, t_0+r^2 t), \ d_r(x,t)=d(x_0+r x, t_0+r^2 t), \ P_r(x,t)=r^2P(x_0+r x, t_0+r^2 t),$$
 is a suitable weak solution of (\ref{ES2}) in $P_1(0)$. Thus  it suffices to prove the lemma
for $z_0=(0,0)$ and $r=1$.

We argue it by contradiction. Suppose that the lemma were false. Then there would exist $\epsilon_i\downarrow 0$ and a sequence of suitable weak solutions
$(u_i,d_i,P_i)\in (L^\infty_tL^2_x\cap L^2_tH^1_x)(P_1,\R^2)\times (L^\infty_t H^1_x\cap L^2_t H^2_x) (P_1,\mathbb S^2)\times L^2(P_1)$
of (\ref{ES2})  such that
\beq
    \Phi\left(u_i,d_i,P_i, (0,0), 1\right)=\epsilon_i\downarrow 0,
\eeq
but, for any $\theta\in(0,\frac12)$, it holds
\beq
    \Phi\left(u_i,d_i,P_i,(0,0), \theta\right)>\frac12 \epsilon_i.
\eeq
Now we define a blow-up sequence:
\beq
    \widetilde{u}_i=\frac{u_i}{\epsilon_i},\quad \widetilde{d}_i=\frac{d_i-(d_i)_{1}}{\epsilon_i},\quad \widetilde{P}_i=\frac{P_i}{\epsilon_i},
\eeq
where $\displaystyle (d_i)_1=\frac{1}{|P_1|}\int_{P_1} d_i$ is the average of $d_i$ over $P_1$.

It is easy to see that ($\widetilde{u}_i, \widetilde{d}_i, \widetilde{P}_i$) satisfies
\beq \label{lq2}
\begin{cases}
        \partial_t \widetilde{u}_i+\epsilon_i\widetilde{u}_i\cdot\nabla\widetilde{u}_i+\nabla \widetilde{P}_i=-\epsilon_i\nabla\cdot(\nabla \widetilde{d}_i\odot\nabla \widetilde{d}_i)+\epsilon_i^{-1}\nabla\cdot(\sigma^{L}(u_i, d_i)),\\
         \nabla\cdot \widetilde{u}_i=0,\\
       \partial_t \widetilde{d}_i+\epsilon_i{\widetilde{u}_i}\cdot\nabla{\widetilde{d}_i}-\epsilon_i^{-1}\Omega^i \hat{d_i}
+\frac{\lambda_2}{\lambda_1}\epsilon_i^{-1}A^i \hat{d_i}=\frac{1}{|\lambda_1|}\left(\Delta \widetilde{d}_i+\epsilon_i|\nabla \widetilde{d}_i|^2 d_i\right)
+\frac{\lambda_2}{\lambda_1}\epsilon_i^{-1}(\hat{d}_i^T A^i \hat{d}_i) d_i.
\end{cases}
\eeq
Moreover, we have
\beq \label{normalized}
\begin{cases}
   \Phi\left(\widetilde{u}_i,\widetilde{d}_i,\widetilde{P}_i, (0,0), 1\right)=1, \\
    \Phi\left(\widetilde{u}_i,\widetilde{d}_i,\widetilde{P}_i, (0,0), \theta\right)>\frac12.
\end{cases}
\eeq
It follows from the equation (\ref{lq2})$_3$ that
\beq \label{d-t-estimate}
\left\|\partial_t \widetilde{d}_i\right\|_{L^2(P_\frac34)}\le C\left(\|\widetilde{u}_i\cdot\nabla\widetilde{d}_i\|_{L^2(P_1)}
+\|\nabla \widetilde{u}_i\|_{L^2(P_1)}+\|\nabla\widetilde{d}_i\|_{L^4(P_1)}^2\right)\le C.
\eeq
After taking possible subsequences, we may assume that there exists
$$(\widetilde u, \widetilde d, \widetilde P)\in \left(L^\infty_tL^2_x\cap L^2_t H^1_x(P_1,\R^2)\right)
\times \left(L^\infty_t H^1_x\cap L^2_t H^2_x(P_1, \R^3)\right)\times L^2(P_1)$$
and a point $d_0\in \mathbb S^2$
such that
$$\begin{cases}\widetilde{u}_i\rightharpoonup \widetilde{u}\ \ {\rm in}\ \ L_t^4L_x^4(P_1)\cap L_t^2H_x^1(P_1),
\\
\widetilde{P}_i\rightharpoonup \widetilde{P} \ \ {\rm in}\ \ L_t^2L_x^2(P_1),\\
\widetilde{d}_i\rightharpoonup\widetilde{d}\ \ {\rm in}\ \ L_t^4W_x^{1,4}(P_1)\cap L_t^2H_x^2(P_1),\\
\widetilde{d}_i\rightharpoonup \widetilde{d} \ \ {\rm{in}}\ \ H^1(P_\frac34),\\
d_i\rightarrow d_0 \ {\rm{\ a.e.}}\ \ P_\frac34.
\end{cases}
$$
It is easy to check that
$$\epsilon_i^{-1}\Omega^i=\frac12\left(\nabla \widetilde{u}_i-(\nabla \widetilde{u}_i)^T\right)
\rightharpoonup \widetilde{\Omega}:=\frac12\left(\nabla \widetilde{u}-(\nabla \widetilde{u})^T\right) \
\ {\rm{in}} \ \ L^2(P_\frac34),$$
$$\epsilon_i^{-1} A^i=\frac12\left(\nabla \widetilde{u}_i+(\nabla \widetilde{u}_i)^T\right)
\rightharpoonup\widetilde{A}:=\frac12\left(\nabla \widetilde{u}+(\nabla \widetilde{u})^T\right) \
\ {\rm{in}} \ \ L^2(P_\frac34),$$
$$\epsilon_i^{-1}(\hat d_i^T A^i \hat d_i)\rightharpoonup \hat{d_0}^T \widetilde{A} \hat{d_0},
\ \ \epsilon_i^{-1}A^i\hat d_i\rightharpoonup \widetilde A \hat d_0\ \  {\rm{in}} \ \ L^2(P_\frac34),
$$
$$\epsilon_i^{-1} N^i=\epsilon_i^{-1}\left(\partial_t d_i+u_i\cdot\nabla d_i-\Omega^i \hat d_i\right)
\rightharpoonup \widetilde{\bf N}:=\partial_t \widetilde{d}-\widetilde{\Omega}\hat d_0
\ \ {\rm{in}}\ \ L^2(P_\frac34),
$$
while
\begin{eqnarray*}
&&\epsilon_i^{-1}\sigma^L(u_i, d_i):=\mu_1 (\hat{d}_i\otimes \hat {d}_i:(\epsilon_i^{-1}A^i)) \hat d_i\otimes \hat d_i
+\mu_2(\epsilon_i^{-1}N^i)\otimes \hat d_i +\mu_3 \hat d_i\otimes (\epsilon_i^{-1}N^i)+\mu_4 (\epsilon_i^{-1}A^i)\\
&&\qquad\qquad\qquad\ \ \ +\mu_5((\epsilon_i^{-1}A^i)\cdot\hat d_i)\otimes \hat d_i +\mu_6 \hat d_i\otimes ((\epsilon_i^{-1}A^i)\cdot\hat d_i)\\
&&\rightharpoonup{\widetilde{\sigma}}^L(\widetilde u, d_0)
:=\mu_1 (\hat d_0\otimes \hat d_0:\widetilde A) \hat d_0\otimes \hat d_0
+\mu_2\widetilde {\bf N}\otimes \hat d_0 +\mu_3 \hat d_0\otimes \widetilde {\bf N}+\mu_4 \widetilde A\\
&&\qquad\qquad\qquad\ \ \ +\mu_5(\widetilde A\cdot\hat d_0)\otimes \hat d_0 +\mu_6 \hat d_0\otimes (\widetilde A\cdot\hat d_0)
 \ \ \ \  {\rm{in}}\ \ \ L^2(P_\frac34).
\end{eqnarray*}
Since $|d_i|=1$, an elementary argument from the differential geometry implies that
\beq \label{tangent}
\widetilde d(x,t)\in T_{d_0}\mathbb S^2 \ \ {\rm{a.e.}}\ \ (x,t)\in P_\frac34.
\eeq
Therefore $(\widetilde{u},\widetilde{d},\widetilde{P})$ satisfies (\ref{tangent}) and the following linear system in $P_\frac34$:
\beq \label{lq3}
\begin{cases}
         \partial_t\widetilde{u}+\nabla \widetilde{P}=\nabla\cdot \left(\widetilde{\sigma}^L(\widetilde u, d_0)\right),\\
         \nabla\cdot \widetilde{u}=0,\\
        \partial_t\widetilde{d}-\widetilde\Omega \hat d_0+\frac{\lambda_2}{\lambda_1}\widetilde A\hat d_0
=\frac{1}{|\lambda_1|}\Delta{\widetilde{d}}+\frac{\lambda_2}{\lambda_1}\left(\hat{d_0}^T \widetilde A \hat{d_0}\right) d_0.
\end{cases}\eeq
By the lower semicontinuity, we have
\beq\label{renormal1}
    \Phi\left(\widetilde{u},\widetilde{d},\widetilde{P}, (0,0), 1\right)\leq 1.
\eeq

By the regularity lemma \ref{le:epsilon2} below, we know that ($\widetilde {u}, \widetilde {d}, \widetilde {P}$)
is smooth in $P_\frac12$ and there exists $0<\theta_0<\frac12$ such that
\beq\label{decay_limit}
\Phi\left(\widetilde{u},\widetilde{d},\widetilde{P}, (0,0), \theta_0\right)\leq C\theta_0<\frac14.
\eeq
In order to reach the desired contradiction, we need to apply the local energy inequality (\ref{local_energy_ineq1}) for ($\widetilde u_i, \widetilde d_i, \widetilde P_i$).
First, observe that the equation (\ref{lq2})$_1$  can be written as
\beq
\partial_t \widetilde{u}_i+\nabla \widetilde{P}_i-\frac{\mu_4}2\Delta \widetilde{u}_i
=g_i:=\Big[-\epsilon_i\nabla\cdot\left(\widetilde{u}_i\otimes\widetilde{u}_i+\nabla \widetilde{d}_i\odot\nabla\widetilde{d}_i\right)+\epsilon_i^{-1}\nabla\cdot\left(\sigma^L(u_i,d_i)-\mu_4 A^i\right)\Big].
\eeq
It follows from (\ref{normalized}) that $g_i\in L^2([-1,0], H^{-1}(B_1))$ and
\beq\nonumber
\Big\|g_i\Big\|_{L^2([-1,0], H^{-1}(B_1))}
&\lesssim& \Big[\|\widetilde u_i\|_{L^4(P_1)}^2+\|\nabla\widetilde{d}_i\|_{L^4(P_1)}^2
+\|\nabla\widetilde{u}_i\|_{L^2(P_1)}+\|\nabla^2 \widetilde{d}_i\|_{L^2(P_1)}\Big]\\
&\lesssim& \Phi\left(\widetilde{u}_i, \widetilde{d}_i, \widetilde{P}_i, (0,0), 1\right)\le C.
\nonumber
\eeq
Hence by the standard estimate on Stokes' system (cf. \cite{Temam}) we have that
$\displaystyle\partial_t \widetilde {u}_i\in L^2([-(\frac34)^2, 0], H^{-1}(B_\frac34))$ and
\beq\label{u-t-estimate}
\Big\|\partial_t\widetilde{u}_i\Big\|_{L^2\left([-(\frac34)^2, 0], H^{-1}(B_\frac34)\right)}
\lesssim\Big[\Phi\left(\widetilde{u}_i, \widetilde{d}_i, \widetilde{P}_i, (0,0), 1\right)
+\Big\|g_i\Big\|_{L^2([-1,0], H^{-1}(B_1))}\Big]\le C.
\eeq
It follows from (\ref{normalized}), (\ref{d-t-estimate}), and (\ref{u-t-estimate}) that we can
apply the Aubin-Lions compactness lemma (cf. \cite{Temam}) to conclude that, after taking possible
subsequences,
\beq \label{l2-strong-conv}
\widetilde{u}_i\rightarrow \widetilde{u},\  \nabla\widetilde{d}_i\rightarrow\nabla\widetilde{d}
\ \ {\rm in}\ \ L^2(P_{\frac34}).
\eeq
By Fubini's theorem, for any $\theta\in (0,\frac12)$ there exists $\tau_0\in (\theta^2,  4\theta^2)$
such that
$$
\int_{B_{2\theta}}\left(|\widetilde u_i-\widetilde u|^2+|\nabla \widetilde d_i-\nabla\widetilde d|^2\right)(x,-\tau_0)\,dx
\leq C\theta^{-2}\int_{P_{2\theta}}\left(|\widetilde u_i-\widetilde u|^2+|\nabla \widetilde d_i-\nabla\widetilde d|^2\right)
\leq C\theta^{-2} o(1).
$$
Here $o(1)$ denotes the constant such that $\displaystyle\lim_{i\rightarrow +\infty}o(1)=0$.
Since
$$\displaystyle \int_{B_{2\theta}}\left(|\widetilde u|^2+|\nabla\widetilde d|^2\right)(x,-\tau_0)\,dx
\leq C\theta^2,$$
we have
\beq\label{slice-d-estimate}
\int_{B_{2\theta}}\left(|\widetilde u_i|^2+|\nabla\widetilde d_i|^2\right)(x,-\tau_0)\,dx
\le C\left[\theta^2+\theta^{-2}o(1)\right].
\eeq
Since $(u_i, d_i, P_i)$ satisfies the local energy inequality (\ref{local_energy_ineq1}), we see that by rescalings $(\widetilde u_i, \widetilde d_i, \widetilde P_i)$
satisfies the following local energy inequality: for any $-\tau_0\le t\le 0$ and any $\eta\in C_0^\infty(B_1)$,
\beq\label{local_energy_ineq2}
&&\int_{\R^2}\eta^2(|\widetilde u_i|^2+|\nabla \widetilde d_i|^2)(t)+\int_{-\tau_0}^{t}\int_{\R^2}\eta^2\Big[\mu_4|\nabla \widetilde u_i|^2
+\frac2{|\lambda_1|}
|\Delta \widetilde d_i+\epsilon_i|\nabla \widetilde d|^2 d_i|^2\Big]\nonumber\\
&\le& \int_{\R^2}\eta^2(|\widetilde u_i|^2+|\nabla \widetilde d_i|^2)(-\tau_0)\nonumber\\
&&+C\int_{-\tau_0}^{t}\int_{\R^2}\Big[(\epsilon_i|\widetilde u_i|^2+|\nabla \widetilde u_i|+\epsilon_i|\widetilde u_i||\nabla \widetilde d_i|
+\epsilon_i|\nabla \widetilde d_i|^2+|\nabla^2 \widetilde d_i|+|\widetilde P_i|)|\widetilde u_i||\nabla(\eta^2)|\nonumber\\
&&\qquad\qquad\ \ \ \ \ +(|\nabla \widetilde u_i|+\epsilon_i|\widetilde u_i||\nabla\widetilde d_i|+\epsilon_i|\nabla \widetilde d_i|^2+
|\nabla^2\widetilde d_i|)|\nabla \widetilde d_i||\nabla(\eta^2)|\Big].
\eeq
By the weak and strong convergence properties for $(\widetilde u_i, \widetilde d_i, \widetilde P_i)$ listed as above,
 we have that, as $i\rightarrow +\infty$,
\beq
&&{\bf E}_i:=\int_{-\tau_0}^{t}\int_{\R^2}\Big[(\epsilon_i|\widetilde u_i|^2+|\nabla \widetilde u_i|+\epsilon_i|\widetilde u_i||\nabla \widetilde d_i|
+\epsilon_i|\nabla \widetilde d_i|^2+|\nabla^2 \widetilde d_i|+|\widetilde P_i|)|\widetilde u_i||\nabla(\eta^2)|\nonumber\\
&&\qquad\qquad+(|\nabla \widetilde u_i|+\epsilon_i|\widetilde u_i||\nabla\widetilde d_i|+\epsilon_i|\nabla \widetilde d_i|^2+
|\nabla^2\widetilde d_i|)|\nabla \widetilde d_i||\nabla(\eta^2)|\Big]\nonumber\\
&&\rightarrow
{\bf E}:=\int_{-\tau_0}^{t}\int_{\R^2}\Big[(|\nabla \widetilde u|+|\nabla^2 \widetilde d|+|\widetilde P|)|\widetilde u||\nabla(\eta^2)|
+(|\nabla \widetilde u|+|\nabla^2\widetilde d|)|\nabla \widetilde d||\nabla(\eta^2)|\Big].
\eeq
Now we choose $\eta\in C^\infty_0(B_1)$ such that
$$0\le\eta\le 1, \ \eta\equiv 1 \ {\rm{in}}\ B_{\sqrt{\tau_0}},\ \eta\equiv 0\ {\rm{outside}}\
B_{2\sqrt{\tau_0}},\ {\rm{and}}\ |\nabla\eta|\le C\tau_0^{-\frac12}\le C\theta^{-1}.$$
Then we have that for $\theta\in (0,\frac14)$,
\begin{eqnarray*}
|{\bf E}|&\lesssim& \theta^{-1}\int_{P_{2\theta}}\Big[(|\nabla \widetilde u|+|\nabla^2 \widetilde d|+|\widetilde P|)|\widetilde u|
+(|\nabla \widetilde u|+|\nabla^2\widetilde d|)|\nabla \widetilde d|\Big]\\
&\lesssim& \Big[\left(\|\nabla^2 \widetilde d\|_{L^2(P_{2\theta})}+\|\nabla\widetilde u\|_{L^2(P_{2\theta})}+\|\widetilde P\|_{L^2(P_{2\theta})}\right)
\left(\|\nabla\widetilde d\|_{L^4(P_{2\theta})}+\|\widetilde u\|_{L^4(P_{2\theta})}\right)\Big]\\
&\lesssim& \Big[\Phi\big(\widetilde u,\widetilde d, \widetilde P, (0,0), 2\theta\big)\Big]^2
\le C\theta^2
\end{eqnarray*}
so that
\beq\label{error_estimate}
|{\bf E}_i|\le C(\theta^2+o(1)).
\eeq
Substituting (\ref{slice-d-estimate}) and (\ref{error_estimate}) into (\ref{local_energy_ineq2}) yields
\beq\label{local_energy_ineq3}
\sup_{-\theta^2\le t\le 0} \int_{B_\theta}(|\widetilde u_i|^2+|\nabla \widetilde d_i|^2)(t)
+\int_{P_\theta}\Big[\mu_4|\nabla \widetilde u_i|^2
+\frac2{|\lambda_1|}
|\Delta \widetilde d_i+\epsilon_i|\nabla \widetilde d|^2 d_i|^2\Big]\le C\left(\theta^2+\theta^{-2} o(1)\right).
\eeq
Since
$$\int_{P_\theta} |\Delta \widetilde d_i|^2\le \int_{P_\theta}
|\Delta \widetilde d_i+\epsilon_i|\nabla \widetilde d|^2 d_i|^2+\epsilon_i^2\int_{P_{\theta}}|\nabla\widetilde d_i|^4
\le  \int_{P_\theta}
|\Delta \widetilde d_i+\epsilon_i|\nabla \widetilde d|^2 d_i|^2+C\epsilon_i^2,
$$
and by the $H^{2}$-estimate
$$\int_{P_{\frac{\theta}2}} |\nabla^2 \widetilde d_i|^2\lesssim \int_{P_{\theta}} |\Delta \widetilde d_i|^2+\theta^{-2}\int_{P_{\theta}}|\nabla\widetilde d_i|^2
\lesssim\int_{P_{\theta}} |\Delta \widetilde d_i|^2+\theta^2+\theta^{-2} o(1) ,$$
we obtain
\beq\label{local_energy_ineq4}
\sup_{-\frac14\theta^2\le t\le 0} \int_{B_{\frac\theta{2}}}(|\widetilde u_i|^2+|\nabla \widetilde d_i|^2)(t)
+\int_{P_{\frac\theta{2}}}\left(|\nabla \widetilde u_i|^2+|\nabla^2\widetilde d_i|^2\right)
\le C\left(\theta^2+\epsilon_i^2+\theta^{-2} o(1)\right).
\eeq

Recall Ladyzhenskaya's inequality in $\R^2$ (cf. \cite{LSU}):
\beq\label{lady}
\int_{B_r}|f|^4\lesssim \int_{B_{2r}}|f|^2\int_{B_{2r}}\left(r^{-2}|f|^2+|\nabla f|^2\right),
\ \forall f\in H^1(B_{2r}).
\eeq
Applying (\ref{lady}) to $\widetilde u_i$ and $\nabla\widetilde d_i$ and integrating over $t$-variable and using (\ref{local_energy_ineq4}),  we obtain
\beq \label{u-d-4estimate}
\int_{P_{\frac{\theta}4}}\left(|\widetilde u_i|^4+|\nabla\widetilde d_i|^4\right)
&\lesssim& \left(\sup_{t\in [-\frac14\theta^2, 0]}\int_{B_{\frac\theta{2}}}|\widetilde u_i|^2
+|\nabla\widetilde d_i|^2\right)\int_{P_{\frac{\theta}2}}\left[\theta^{-2}(|\widetilde u_i|^2
+|\nabla\widetilde d_i|^2)+|\nabla \widetilde u_i|^2
+|\nabla^2 \widetilde d_i|^2\right]\nonumber\\
&\le& C\left(\theta^4+\epsilon_i^4+\theta^{-4}o(1)\right).
\eeq
To estimate $\widetilde P_i$, let's take divergence of the equation (\ref{lq2})$_1$ to get
\beq\label{P-equation}
\Delta\widetilde P_i=-\epsilon_i(\nabla\cdot)^2\left(\widetilde u_i\otimes\widetilde u_i+\nabla\widetilde d_i\odot\nabla\widetilde d_i\right)
+\epsilon_i^{-1}(\nabla\cdot)^2\left(\sigma^L(u_i,d_i)\right)\ \ \ {\rm{in}}\ \ \ B_\theta.
\eeq
Let $\phi\in C_0^\infty(\R^2)$ such that
$$0\le\phi\le 1,\  \phi\equiv 1 \ {\rm{in}}\ B_{\frac{\theta}8},\  \phi\equiv 0\ {\rm{outside}} \ B_{\frac{\theta}4}, \ {\rm{and}} \ |\nabla\phi|\lesssim \theta^{-1}.$$
Define $\widetilde Q_i$ by
$$\widetilde Q_i(x,t)
=-\int_{\R^2}\nabla^2_y G(x-y): \phi^2(y)\left[\epsilon_i (\widetilde u_i\otimes\widetilde u_i+\nabla\widetilde d_i\odot\nabla\widetilde d_i)-\epsilon_i^{-1}
\sigma^L(u_i, d_i)\right](y,t)\,dy,$$
where $G$ is the fundamental solution of the Laplace equation on $\R^2$. Then we have
$$
\Delta \widetilde Q_i= -(\nabla\cdot)^2\Big[\phi^2\left(\epsilon_i(\widetilde u_i\otimes\widetilde u_i+\nabla\widetilde d_i\odot\nabla\widetilde d_i)
-\epsilon_i^{-1}\sigma^L(u_i,d_i)\right)\Big] \ \ {\rm{in}}\ \ \R^2.
$$
By Calderon-Zygmund's $L^2$-theory we have
\beq\label{Qi-estimate1}
\|\widetilde Q_i\|_{L^2(\R^2)}^2&\lesssim& \Big[\epsilon_i(\|\widetilde u_i\|_{L^4(B_{\frac{\theta}4})}^4+\|\nabla\widetilde d_i\|_{L^4(B_{\frac{\theta}4})}^4)
+\|\epsilon_i^{-1}\sigma^L(u_i,d_i)\|_{L^2(B_{\frac{\theta}4})}^2\Big]\nonumber\\
&\lesssim&
\Big[\epsilon_i(\|\widetilde u_i\|_{L^4(B_{\frac{\theta}4})}^4+\|\nabla\widetilde d_i\|_{L^4(B_{\frac{\theta}4})}^4)
+\|\nabla\widetilde u_i\|_{L^2(B_{\frac{\theta}4})}^2+\|\epsilon_i^{-1}N^i\|_{L^2(B_{\frac{\theta}4})}^2\Big]\nonumber\\
&\lesssim&
\Big[\|\widetilde u_i\|_{L^4(B_{\frac{\theta}4})}^4+\|\nabla\widetilde d_i\|_{L^4(B_{\frac{\theta}4})}^4
+\|\nabla\widetilde u_i\|_{L^2(B_{\frac{\theta}4})}^2+\|\nabla^2\widetilde d_i\|_{L^2(B_{\frac{\theta}4})}^2\Big].
\eeq
Integrating (\ref{Qi-estimate1}) over $t\in [-\theta^2, 0]$, and using (\ref{local_energy_ineq4}) and (\ref{u-d-4estimate}), we obtain
\beq\label{Qi-estimate2}
\int_{-\theta^2}^0\int_{\R^2}|\widetilde Q_i|^2\lesssim \int_{P_{\frac{\theta}4}}\left(|\widetilde u_i|^4+|\nabla \widetilde d_i|^4+|\nabla\widetilde u_i|^2
+|\nabla^2 \widetilde d_i|^2\right)\le C\left(\theta^4+\epsilon_i^4+\theta^{-4}o(1)\right).
\eeq
Set $\widetilde R_i=\widetilde P_i-\widetilde Q_i$ in $P_\theta$. Then we have
$$\Delta \widetilde R_i(t)=0 \ {\rm{in}}\ B_{\frac{\theta}4}, \ \forall t\in [-\theta^2, 0],$$
so that by the standard estimate of harmonic functions and (\ref{Qi-estimate2}) we have
\beq\label{Ri-estimate}
\int_{P_{\theta^2}}|\widetilde R_i|^2\lesssim\theta^2\int_{P_{\frac{\theta}4}}|\widetilde R_i|^2
\le C\theta^2 \int_{P_{\frac{\theta}4}}(|\widetilde P_i|^2+|\widetilde Q_i|^2)\le C\Big[\theta^2+\left(\theta^4+\epsilon_i^4+\theta^{-4}o(1)\right)\Big].
\eeq
Putting (\ref{Qi-estimate2}) together with (\ref{Ri-estimate}) yields
\beq\label{Pi-estimate}
\int_{P_{\theta^2}}|\widetilde P_i|^2\le C\Big[\theta^2+\left(\theta^4+\epsilon_i^4+\theta^{-4}o(1)\right)\Big].
\eeq
Combining all these estimates (\ref{local_energy_ineq4}), (\ref{u-d-4estimate}), and (\ref{Pi-estimate}), we obtain
\beq
\Phi\left(\widetilde u_i, \widetilde d_i, \widetilde P_i, (0,0), \theta^2\right)
\le C\Big[\theta+\epsilon_i+\theta^{-1}o(1)\Big] \le \frac14,
\eeq
provided that we first choose sufficiently small $\theta$ and  then choose sufficiently large $i$.
This gives the desired contradiction. The proof is complete.
\end{proof}
\medskip

The following lemma plays an important role in the blow-up process, which may have its own interest.

\begin{lemma} \label{le:epsilon2}  Assume  (\ref{necessary}), (\ref{parodi}), and (\ref{Leslie_condition}) hold.
For any point $d_0\in\mathbb S^2$,
if $\widetilde{u}\in \left(L^\infty_t L^2_x\cap L^2_tH^1_x\right)(P_\frac34,\R^2)$,
$\widetilde{d}\in \left(L^\infty_tH^{1}_x\cap L^2_tH^2_x\right)(P_\frac34,T_{d_0}\mathbb S^2)$,
and $\widetilde{P}\in L^2(P_\frac34)$ solves the linear system (\ref{lq3}) and satisfies the condition (\ref{renormal1}),
then $(\widetilde{u},\widetilde{d},\widetilde{P})\in C^\infty(P_{\frac12})$
and satisfies the following estimate:
\beq\label{decay1}
  \Phi\left(\widetilde{u},\widetilde{d},\widetilde{P}, (0,0), \theta\right)\leq C\theta,\ \forall \ \theta\in(0,\frac12).
\eeq
\end{lemma}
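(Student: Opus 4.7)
The plan is to exploit three features of (\ref{lq3}): (i) it is a linear system with constant coefficients, (ii) the same structural cancellations underlying Lemma~\ref{global_energy_ineq} remain valid after replacing $d$ by the constant vector $d_0$ and dropping the nonlinear terms $|\nabla d|^2 d$ and $\nabla d\odot\nabla d$, and (iii) the tangency constraint $\widetilde d(x,t)\in T_{d_0}\mathbb S^2$ is preserved under spatial differentiation and gives $\langle d_0,\Delta \widetilde d\rangle\equiv 0$. From these, a bootstrap through higher-order local energy inequalities yields interior $C^\infty$ regularity, after which the decay bound (\ref{decay1}) is an immediate consequence of smoothness.

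First I would establish a local energy identity for $(\widetilde u,\widetilde d)$ by testing (\ref{lq3})$_1$ against $\widetilde u\eta^2$ and (\ref{lq3})$_3$ against $-\Delta\widetilde d\,\eta^2$ and adding. Because $d_0$ is constant, the term $\frac{\lambda_2}{\lambda_1}(\hat d_0^T\widetilde A\hat d_0)\langle d_0,\Delta\widetilde d\rangle$ vanishes by (iii). The cancellations (\ref{cancel1})--(\ref{cancel2}) then go through verbatim with $\hat d$ replaced by $\hat d_0$, and the condition (\ref{Leslie_condition}) together with the Parodi relation (\ref{parodi}) produces the coercive lower bound
\[
\frac{d}{dt}\int\eta^2(|\widetilde u|^2+|\nabla\widetilde d|^2)+c\int\eta^2\bigl(|\nabla\widetilde u|^2+|\Delta\widetilde d|^2\bigr)\le C\int\bigl(|\widetilde u|^2+|\nabla\widetilde d|^2+|\widetilde P|^2\bigr)|\nabla(\eta^2)|,
\]
with lower-order cutoff terms absorbed. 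A standard Caccioppoli iteration on shrinking cylinders, using (\ref{renormal1}), yields $\widetilde u$ and $\nabla\widetilde d$ in $L^\infty_tL^2_x\cap L^2_tH^1_x$ on $P_{5/8}$.

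Since (\ref{lq3}) has constant coefficients, for any multi-index $\alpha$ the pair $(\partial_x^\alpha\widetilde u,\partial_x^\alpha\widetilde d)$ solves the same linear system with pressure $\partial_x^\alpha\widetilde P$, and $\partial_x^\alpha\widetilde d$ still lies in $T_{d_0}\mathbb S^2$. I would iterate the preceding local energy estimate on nested cylinders $P_{r_k}$ with $r_k\downarrow\tfrac12$ to bound $\partial_x^\alpha\widetilde u$ and $\partial_x^\alpha\nabla\widetilde d$ in $L^\infty_tL^2_x\cap L^2_tH^1_x$ for arbitrary $\alpha$. Time derivatives are then handled directly by solving the equations algebraically for $\partial_t\widetilde u$ and $\partial_t\widetilde d$ in terms of spatial derivatives of the solution. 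For the pressure, $\Delta\widetilde P=(\nabla\cdot)^2\widetilde\sigma^L(\widetilde u,d_0)$, and the splitting into a Newtonian potential piece plus a harmonic corrector (as in the proof of Lemma~\ref{le:epsilon}) combined with Calder\'on--Zygmund theory upgrades the $L^2$ bound on $\widetilde P$ to interior smoothness. Sobolev embedding then yields $(\widetilde u,\widetilde d,\widetilde P)\in C^\infty(P_{1/2})$ with quantitative bounds depending only on $\Phi(\widetilde u,\widetilde d,\widetilde P,(0,0),1)\le 1$.

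Granted these $C^\infty$ bounds, the decay (\ref{decay1}) is elementary: on $P_\theta$ with $\theta\in(0,\tfrac12)$, H\"older's inequality gives $\bigl(\int_{P_\theta}|f|^p\bigr)^{1/p}\le\|f\|_{L^\infty(P_{1/2})}|P_\theta|^{1/p}$ for the appropriate $p$, and each term of $\Phi$ is then dominated by $C\theta$ or $C\theta^2$; all five contributions are bounded by $C\theta$ since $\theta<\tfrac12$. The main obstacle, and the step I expect to require the most care, is the initial closure of the local energy inequality: one must verify that the algebraic cancellations from the proof of Lemma~\ref{global_energy_ineq} continue to yield a nonnegative definite quadratic form in $(\widetilde A,\widetilde\Omega,\nabla\widetilde d)$ despite the absence of the stabilizing $|\nabla d|^2 d$ term, which is precisely where (\ref{parodi}), (\ref{Leslie_condition}) and the tangency of $\widetilde d$ must be invoked together.
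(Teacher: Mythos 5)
Your proposal follows essentially the same route as the paper: differentiate (\ref{lq3}) in $x$ (the constant-coefficient structure preserves the system and the tangency constraint), run the local energy method with exactly the cancellations (\ref{cancel1})--(\ref{cancel2}) adapted to the constant director $\hat d_0$, use (\ref{Leslie_condition}) and (\ref{parodi}) to make the quadratic form in $\widetilde A$ nonnegative, close the bootstrap with the elliptic estimate for $\nabla^k\widetilde P$ at each order, and then read off (\ref{decay1}) from the resulting $L^\infty$ bounds by H\"older. Two small remarks. First, your zeroth-order local energy step is redundant: the hypotheses of the lemma already give $\widetilde u\in L^\infty_tL^2_x\cap L^2_tH^1_x$, $\widetilde d\in L^\infty_tH^1_x\cap L^2_tH^2_x$, $\widetilde P\in L^2$ with the quantitative bound (\ref{renormal1}), so the bootstrap can (and in the paper does) start directly at first derivatives. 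Second, the pressure cannot be postponed to a single post-hoc step: the $k$-th order energy inequality carries $|\nabla^{k-1}\widetilde P|^2$ (or a term absorbing $\widetilde P\,\nabla^{k+1}\widetilde u$ into the dissipation, as in the paper's estimate of $R_1$) on the right, so the elliptic estimate $\Delta(\nabla^{k-1}\widetilde P)=(\nabla\cdot)^2\nabla^{k-1}\widetilde\sigma^L(\widetilde u,d_0)$ must be interleaved into the induction rather than applied only at the end; with that bookkeeping fixed your argument closes exactly as the paper's does.
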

\begin{proof} To simplify the notations, we write $(u,d, P)$ for ($\widetilde u, \widetilde d, \widetilde P$) in the proof below.
The argument is based on the higher order local energy inequality argument.

Taking $\frac{\partial}{\partial x_i}$ of the linear system (\ref{lq3}) yields
\beq
\label{lq4}
\begin{cases}
         \partial_t u_{x_i}+\nabla {P_{x_i}}=\nabla\cdot(\widetilde{\sigma}^L(u, d_0))_{x_i},\\
         \nabla\cdot u_{x_i}=0,\\
        \partial_t d_{x_i}-\Omega_{x_i}\hat d_0+\frac{\lambda_2}{\lambda_1}A_{x_i}\hat d_0=
\frac{1}{|\lambda_1|}\Delta d_{x_i}+\frac{\lambda_2}{\lambda_1}\left(\hat d_0^TA_{x_i} \hat d_0\right) d_0.
\end{cases}
\eeq
For any $\eta\in C_0^\infty(B_1)$, multiplying the equation (\ref{lq4})$_1$ by $u_{x_i}\eta^2$ and the equation (\ref{lq4})$_3$
by  $\Delta d_{x_i}\eta^2$ and integrating the resulting equations over $B_1$, we obtain\footnote{Strictly speaking, we first need to take finite quotient $D_h^i$
of the system (\ref{lq3}) and then multiply the first equation and the third equation of the resulting equations by $D_h^i u(x,t)\eta^2=\frac{u(x+he_i, t)-u(x,t)}{h}$ and$D_h^i(\Delta d)\eta^2$  respectively for $h>0$ and $i=1,2$, with $e_1=(1,0)$ and $ e_2=(0,1)$. Then the desired estimate
follows from the estimates on the finite quotients by sending $h$ to zero.}
\beq
    \label{u}
   && \frac{d}{dt}\int_{B_1}|\nabla u|^2\eta^2=\\
&&-2\Big[\int_{B_1}P\left[\Delta u\cdot\nabla(\eta^2)+\nabla u:\nabla^2(\eta^2)\right]
+\int_{B_1}\left(\widetilde{\sigma}^L(u,d_0)\right)_{x_i} : u_{x_i}\otimes\nabla(\eta^2)+\int_{B_1}\eta^2\left(\widetilde{\sigma}^L(u,d_0)\right)_{x_i} : \nabla u_{x_i}\Big],\nonumber
\eeq
\beq
    \label{d}
    &&\frac{d}{dt}\int_{B_1}|\nabla^2 d|^2\eta^2+\frac{2}{|\lambda_1|}\int_{B_1}|\Delta\nabla d|^2\eta^2\nonumber\\
&=&-2\int_{B_1}\partial_t d_{x_i}\cdot\nabla d_{x_i}\cdot\nabla(\eta^2) -
2\int_{B_1}\left(\langle\Omega_{x_i} \hat d_0-\frac{\lambda_2}{\lambda_1}A_{x_i}\hat d_0,\Delta \hat d_{x_i}\rangle
+\langle\frac{\lambda_2}{\lambda_1}(\hat d_0^TA_{x_i}\hat d_0)d_0,
\Delta d_{x_i}\rangle\right)\eta^2.\nonumber\\
&=&-2\int_{B_1}\partial_t d_{x_i}\cdot\nabla d_{x_i}\cdot\nabla(\eta^2) -
2\int_{B_1}\left(\langle\Omega_{x_i} \hat d_0-\frac{\lambda_2}{\lambda_1}A_{x_i}\hat d_0,\Delta \hat d_{x_i}\rangle\right)\eta^2,
\eeq
where we have used in the last step the fact $d\in T_{d_0}\mathbb S^2$ in order to deduce that $\langle d_0, \Delta d_{x_i}\rangle =0$  a.e. in $B_1$.

Similar to the calculations in the proof of lemma \ref{global_energy_ineq}, we have
\begin{eqnarray}
&&\int_{B_1}\eta^2\left(\widetilde{\sigma}^L(u,d_0)\right)_{x_i} : \nabla u_{x_i}\nonumber\\
&=&\int_{B_1}\eta^2\Big[\mu_1 (\hat d_0\otimes \hat d_0:A_{x_i}) \hat d_0\otimes \hat d_0
+\mu_2\widetilde {\bf N}_{x_i}\otimes \hat d_0 +\mu_3 \hat d_0\otimes \widetilde {\bf N}_{x_i}+\mu_4 A_{x_i}\nonumber\\
&&\qquad\ \ +\mu_5(A_{x_i}\cdot\hat d_0)\otimes \hat d_0 +\mu_6 \hat d_0\otimes (A_{x_i}\cdot\hat d_0)\Big]:\left(A_{x_i}+\Omega_{x_i}\right)\nonumber\\
&=& \int_{B_1}\eta^2 \Big[\mu_1 (\hat d_0^T A_{x_i}\hat d_0)^2+\mu_4 |A_{x_i}|^2-\lambda_2\widetilde{\bf N}_{x_i}\cdot(A_{x_i}\cdot \hat d_0)
+\lambda_1\widetilde{\bf N}_{x_i}\cdot(\Omega_{x_i}\cdot\hat d_0) \nonumber\\
&&\qquad\ \ +(\mu_5+\mu_6)|A_{x_i}\cdot \hat d_0|^2+\lambda_2(A_{x_i}\cdot \hat d_0)(\Omega_{x_i}\cdot \hat d_0)\Big]. \label{sigma_identity}
\end{eqnarray}
By the equation (\ref{lq4})$_3$, we have
$$
\lambda_1\widetilde{\bf N}_{x_i}\cdot(\Omega_{x_i}\cdot\hat d_0)
=-\lambda_2(A_{x_i}\cdot\hat{d}_0)(\Omega_{x_i}\cdot\hat{d}_0)-\langle\Omega_{x_i}\hat{d}_0, \Delta \hat {d}_{x_i}\rangle,
$$
and
$$
-\lambda_2\widetilde{\bf N}_{x_i}\cdot(A_{x_i}\cdot\hat d_0)
=\frac{\lambda_2^2}{\lambda_1}|A_{x_i}\cdot\hat d_0|^2-\frac{\lambda_2^2}{\lambda_1}(\hat d_0^T A_{x_i} \hat d_0)^2
+\frac{\lambda_2}{\lambda_1}\langle A_{x_i}\hat d_0, \Delta \hat d_{x_i}\rangle.
$$
Substituting these identities into (\ref{sigma_identity}), we obtain
\begin{eqnarray}
&&\int_{B_1}\eta^2\left(\widetilde{\sigma}^L(u,d_0)\right)_{x_i} : \nabla u_{x_i}\nonumber\\
&=& \int_{B_1}\eta^2 \Big[(\mu_1-\frac{\lambda_2^2}{\lambda_1}) (\hat d_0^T A_{x_i}\hat d_0)^2+\mu_4 |A_{x_i}|^2
+\langle\frac{\lambda_2}{\lambda_1}A_{x_i}\hat{d}_0-\Omega_{x_i}\hat d_0, \Delta \hat d_{x_i}\rangle \nonumber\\
&&\qquad\ \ +(\mu_5+\mu_6+\frac{\lambda_2^2}{\lambda_1})|A_{x_i}\cdot \hat d_0|^2\Big]. \label{sigma_identity1}
\end{eqnarray}
Putting (\ref{sigma_identity1}) into (\ref{u}) and adding the resulting (\ref{u}) with (\ref{d}),  we have, by (\ref{Leslie_condition}),
\beq
    \nonumber
    \label{u,d}
    &&\frac{d}{dt}\int_{B_1}\left(|\nabla{u}|^2+|\nabla^2{d}|^2\right)\eta^2 +\int_{B_1}\left(\mu_4|\nabla^2 u|^2+\frac{2}{|\lambda_1|}|\nabla^3 d|^2\right)\eta^2
    \\&=&-2\int_{B_1}\eta^2 \Big[(\mu_1-\frac{\lambda_2^2}{\lambda_1}) (\hat d_0^T A_{x_i}\hat d_0)^2
+(\mu_5+\mu_6+\frac{\lambda_2^2}{\lambda_1})|A_{x_i}\cdot \hat d_0|^2\Big]
 \nonumber\\
&&-2\int_{B_1}P\left[\Delta u\cdot\nabla(\eta^2)+\nabla u:\nabla^2(\eta^2)\right]
-2\int_{B_1}\left(\widetilde{\sigma}^L(u,d_0)\right)_{x_i} : u_{x_i}\otimes\nabla(\eta^2)\nonumber\\
&&+\int_{B_1}\Big(\mu_4\nabla^2 u\cdot\nabla(\eta^2):\nabla u-2\partial_t d_{x_i}\cdot\nabla d_{x_i}\cdot\nabla(\eta^2)
-\frac{2}{|\lambda_1|}\nabla(\nabla d)\otimes\nabla(\nabla d):\nabla^2(\eta^2)\Big)\nonumber\\
&\le& -2\int_{B_1}P\left[\Delta u\cdot\nabla(\eta^2)+\nabla u:\nabla^2(\eta^2)\right]
-2\int_{B_1}\left(\widetilde{\sigma}^L(u,d_0)\right)_{x_i} : u_{x_i}\otimes\nabla(\eta^2)+\nonumber\\
&&\int_{B_1}\Big[\mu_4\nabla^2 u\cdot\nabla(\eta^2):\nabla u-2\partial_t d_{x_i}\cdot\nabla d_{x_i}\cdot\nabla(\eta^2)
+\frac{2}{|\lambda_1|}(\nabla^3 d\cdot(\nabla d)_{x_i}+\Delta\nabla d\cdot(\nabla d)_{x_i})(\eta^2)_{x_i}\Big]\nonumber\\
&:=& R_1+R_2+R_3.
\eeq
Now we estimate each term of the right hand side of (\ref{u,d}) as follows.
\begin{eqnarray*}
    |R_1|&\lesssim& \int_{B_1}|P|(|\nabla^2 u|^2\eta|\nabla\eta|+|\nabla u|(|\nabla^2\eta|+|\nabla\eta|^2))\\
&\le& \frac{\mu_4}6\int_{B_1}|\nabla^2 u|^2\eta^2+C\int_{B_1}(|P|^2+|\nabla u|^2)(|\nabla\eta|^2+|\nabla^2\eta|),
\end{eqnarray*}
\begin{eqnarray*}
 |R_2|&\lesssim&\int_{B_1}(|\nabla^2 u|+|\nabla^3 d|)|\nabla u|\eta|\nabla\eta|\\
&\leq& \frac{\mu_4}{6}\int_{B_1}|\nabla^2 u|^2\eta^2+\frac{1}{2|\lambda_1|}\int_{B_1}|\nabla^3 d|^2\eta^2
+C\int_{B_1}|\nabla u|^2|\nabla\eta|^2,
\end{eqnarray*}
and
\begin{eqnarray*}
|R_3|&\lesssim &\int_{B_1}|\nabla^2 u||\nabla u|\eta|\nabla\eta|+(|\nabla^2 u|+|\nabla^3 d|)|\nabla^2 d|\eta|\nabla\eta|\\
&\le& \frac{\mu_4}6\int_{B_1}|\nabla^2 u|^2\eta^2+\frac{1}{2|\lambda_1|}\int_{B_1}|\nabla^3 d|^2\eta^2
+C\int_{B_1}(|\nabla u|^2+|\nabla^2 d|^2)|\nabla\eta|^2.
\end{eqnarray*}
Putting these estimates into (\ref{u,d}), we obtain
\beq\label{u-d-1}
&&\frac{d}{dt}\int_{B_1}\left(|\nabla{u}|^2+|\nabla^2{d}|^2\right)\eta^2 +\int_{B_1}\left(\frac{\mu_4}2|\nabla^2 u|^2+\frac{1}{|\lambda_1|}|\nabla^3 d|^2\right)\eta^2 \nonumber\\
&&\le C\int_{B_1}\Big[(|P|^2+|\nabla u|^2)(|\nabla\eta|^2+|\nabla^2\eta|)+|\nabla^2 d|^2|\nabla\eta|^2\Big],
\eeq
By Fubini's theorem, there exists $t_*\in [-\frac14,0]$ such that
\beq\nonumber
    \int_{B_1}\left(|\nabla{u}|^2+|\nabla^2{d}|^2\right)\eta^2(t_*)\leq 8\int_{P_1}\left(|\nabla u|^2+|\nabla^2 d|^2\right)\eta^2.
\eeq
Integrating (\ref{u-d-1}) over $t\in[t_*,0]$ yields that
\beq
    \nonumber
    &&\sup_{-\frac14\le t\le 0}\int_{B_1}\left(|\nabla u|^2+|\nabla^2 d|^2\right)\eta^2(t)
+\int_{-\frac14}^0\int_{B_1}\left(|\nabla^2{u}|^2+|\nabla^3{d}|^2\right)\eta^2
    \\&&\leq C\int_{P_1}\Big[(|P|^2+|\nabla u|^2)(|\nabla\eta|^2+|\nabla^2\eta|)+|\nabla^2 d|^2|\nabla\eta|^2\Big]
+ C\int_{P_1}\left(|\nabla u|^2+|\nabla^2 d|^2\right)\eta^2.\label{u-d-2}
\eeq
For the pressure $P$, taking divergence of the equation (\ref{lq4})$_1$ yields that for any $-1\le t\le 0$,
\beq\label{pressure-equation}
\Delta P_{x_i}=(\nabla\cdot)^2\left(\widetilde\sigma^L(u, d_0)\right)_{x_i} \ \ {\rm{in}}\ \ B_\frac34.
\eeq
Similar to the pressure estimates obtained in the proof of lemma \ref{le:epsilon}, we have
\beq \label{pressure-estimate1}
\int_{P_\frac14}|\nabla P|^2\lesssim \int_{P_\frac5{16}}|\widetilde\sigma^L(u,d_0)_{x_i}|^2+|P|^2
\lesssim \int_{P_\frac38}\left(|\nabla^2 u|^2+|\nabla^3 d|^2+|P|^2\right).
\eeq
Let $\eta\in C_0^1(B_1)$ be a cut-off function of $B_\frac38$, i.e. $\eta\equiv 1$ in $B_\frac38$, $\eta\equiv 0$ outside $B_\frac34$,
$0\le \eta\le 1$, and $|\nabla\eta|+|\nabla^2\eta|\le 16$. Then, by combining (\ref{u-d-2}) with (\ref{pressure-estimate1}), we obtain
\beq
    \nonumber
    &&\sup_{-(\frac14)^2\le t\le 0}\int_{B_\frac14}\left(|\nabla u|^2+|\nabla^2 d|^2\right)(t)
+\int_{P_\frac14}\left(|\nabla^2{u}|^2+|\nabla^3{d}|^2+|\nabla P|^2\right)
    \\&&\leq C\int_{P_\frac34}\Big[|P|^2+|\nabla u|^2+|\nabla^2 d|^2\Big].
\label{u-d-3}
\eeq
It turns out that the above energy method can be extended to any high order. Here we only give a sketch of the proof.
In fact, if we denote
$\nabla^\alpha=\frac{\partial^k}{\partial x^\alpha}$ as the $k$-th order derivative for any multiple index
$\alpha=(\alpha_1,\alpha_2)$ ($k=|\alpha|=\alpha_1+\alpha_2\ge 2$), and take $\nabla^\alpha$ of the system (\ref{lq3}),
then we obtain
\beq
\label{lq5}
\begin{cases}
         \partial_t (\nabla^\alpha u)+\nabla ({\nabla^\alpha P})=\nabla\cdot(\nabla^\alpha(\widetilde{\sigma}^L(u, d_0))),\\
         \nabla\cdot (\nabla^\alpha u)=0,\\
        \partial_t (\nabla^\alpha d)-(\nabla^\alpha \Omega) \hat d_0+\frac{\lambda_2}{\lambda_1}(\nabla^\alpha A)\hat d_0=
\frac{1}{|\lambda_1|}\Delta (\nabla^\alpha d)+\frac{\lambda_2}{\lambda_1}\left(\hat d_0^T(\nabla^\alpha A) \hat d_0\right) d_0.
\end{cases}
\eeq
Multiplying (\ref{lq5})$_1$ by $(\nabla^\alpha u) \eta^2$ and (\ref{lq5})$_3$ by $\Delta (\nabla^\alpha d)\eta^2$ and integrating the resulting
equations over $B_1$, and repeating the above calculations and cancelations, we would obtain
\beq\label{kth-estimate}
&&\frac{d}{dt}\int_{B_1}\left(|\nabla^k u|^2+|\nabla^{k+1}d|^2\right)\eta^2+
\int_{B_1}\left(\frac{\mu_4}2|\nabla^{k+1}u|^2+\frac{1}{|\lambda_1|}|\nabla^{k+2}d|^2\right)\eta^2\nonumber\\
&&\le C\int_{B_1}\Big[(|\nabla^{k-1}P|^2+|\nabla^k u|^2)(|\nabla\eta|^2+|\nabla^2 \eta|)+|\nabla^{k+1}d|^2|\nabla\eta|^2\Big].
\eeq
For $P$, since
\beq\label{pressure-equation1}
\Delta (\nabla^\alpha P)=(\nabla\cdot)^2\left(\nabla^\alpha(\widetilde\sigma^L(u, d_0))\right) \ \ {\rm{in}}\ \ B_\frac34,
\eeq
we have
\beq\label{pressure-estimate2}
\int_{P_\frac14} |\nabla^k P|^2\lesssim \int_{P_\frac38}\left(|\nabla^{k-1}P|^2+|\nabla^{k+1}u|^2+|\nabla^{k+2}d|^2\right).
\eeq
Following the same lines of proof as above, we can choose suitable time slice $t_*\in (-\frac14,0)$ such that
\beq\nonumber\label{slice1}
    \int_{B_1}\left(|\nabla^k{u}|^2+|\nabla^{k+1}{d}|^2\right)\eta^2(t_*)\leq 8\int_{P_1}\left(|\nabla^k u|^2+|\nabla^{k+1} d|^2\right)\eta^2.
\eeq
By choosing suitable test functions similar to the above ones, we can reach that for any $k\ge 2$, it holds
\beq
    \nonumber
    &&\sup_{-(\frac14)^2\le t\le 0}\int_{B_\frac14}\left(|\nabla^k u|^2+|\nabla^{k+1} d|^2\right)(t)
+\int_{P_\frac14}\left(|\nabla^{k+1}{u}|^2+|\nabla^{k+2}{d}|^2+|\nabla^k P|^2\right)
    \\&&\leq C\int_{P_\frac34}\Big[|\nabla^{k-1}P|^2+|\nabla^k u|^2+|\nabla^{k+1}d|^2\Big].
\label{u-d-4}
\eeq
It is clear that with suitable adjusting of the radius, we see that (\ref{u-d-4}) and (\ref{u-d-3}) implies that
\beq
    \nonumber
    &&\sup_{-(\frac14)^2\le t\le 0}\int_{B_\frac14}\left(|\nabla^k u|^2+|\nabla^{k+1} d|^2\right)(t)
+\int_{P_\frac14}\left(|\nabla^{k+1}{u}|^2+|\nabla^{k+2}{d}|^2+|\nabla^k P|^2\right)
    \\&&\leq C\int_{P_\frac34}\Big[|P|^2+|\nabla u|^2+|\nabla^2d|^2\Big]
\label{u-d-5}
\eeq
holds for all $k\ge 1$.

Now we can apply the regularity theory for both the linear Stokes equations (c.f. \cite{Temam}) and the linear heat equation
(cf. \cite{LSU}) to conclude that $(u,d)\in C^\infty(P_{\frac12})$.
Furthermore, apply the elliptic estimate for the pressure equation (\ref{pressure-equation}), we see that
$P\in C^\infty(P_{\frac12})$ (first we have $\nabla^k{P}\in C^0(P_{\frac12})$, then note that $\partial^l_t{P}$ also satisfies a similar elliptic equation, so that $\nabla^k\partial_t^l P\in C^0(P_{\frac12})$). Therefore $(u,d,P)\in C^\infty(P_{\frac12})$ and the desired estimate (\ref{decay1}) holds.
The proof of lemma \ref{le:epsilon2} is complete.
\end{proof}

In order to show the smoothness of solutions to (\ref{ES2}) under the condition (\ref{small_condition}),
we need to iterate the decay inequality (\ref{decay_ineq}) and establish
higher integrability of $(u,\nabla d)$ by applying the techniques of Morrey space estimates for Riesz potentials, similar to that by Hineman-Wang \cite{Hine-Wang}.

\begin{lemma}
  \label{high_integrability} Assume that the conditions (\ref{necessary}),
(\ref{parodi}), and (\ref{Leslie_condition}) hold. For any $0<T\le +\infty$ and a bounded domain $O\subset\R^2$, there exists $\epsilon_0>0$ such that
  $(u,P, d)\in L^\infty_t L^2_x\cap L^2_tH^1_x(O\times [0,T])\times L^2(O\times [0,T])\times L^2_tH^2_x(O\times [0,T],\mathbb S^2)$
is a suitable weak solution of (\ref{ES2}), and satisfies, for $z_0=(x_0,t_0)\in O\times (0,T)$ and $P_{r_0}(z_0)\subset O\times (0, T)$,
  \begin{equation}\label{eps00}
   \Phi\left(u, d, P, z_0, r_0\right)
\leq \epsilon_0,
  \end{equation}
 then $(u,\nabla d) \in L^q_{\rm{loc}}(P_{r_0}(z_0))$ for any $1<q<+\infty$. Moreover, it holds
  \begin{equation}
  \label{lq:estimate}
  \|(u,\nabla d)\|_{L^q(P_{\frac{r_0}4}(z_0))} \leq C(q, r_0)\epsilon_0.
  \end{equation}
\end{lemma}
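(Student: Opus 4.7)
The argument proceeds in three stages: iterating the decay Lemma \ref{le:epsilon} to obtain parabolic Morrey decay of $\Phi$, expressing $u$ and $\nabla d$ pointwise via Riesz potentials of the nonlinearities, and finally boosting integrability by Adams-type estimates for Riesz potentials on parabolic Morrey spaces, in the spirit of Huang-Wang \cite{HW} and Hineman-Wang \cite{Hine-Wang}.

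\textbf{Step 1: Morrey-type decay.} Since (\ref{ES2}) is invariant under parabolic scaling and (\ref{eps00}) holds at $z_0$ at scale $r_0$, a standard continuity and covering argument shows that for every $z\in P_{r_0/2}(z_0)$ and $r\le r_0/2$ one still has $\Phi(u,d,P,z,r)\le\epsilon_0$, so Lemma \ref{le:epsilon} applies at every such $(z,r)$. Iterating (\ref{decay_ineq}) $k$ times yields $\Phi(u,d,P,z,\theta_0^k r)\le 2^{-k}\Phi(u,d,P,z,r)$, and interpolating between consecutive dyadic scales produces a Hölder exponent $\alpha\in(0,1)$ with
\begin{equation*}
\Phi(u,d,P,z,r)\le C\Bigl(\frac{r}{r_0}\Bigr)^{\alpha}\epsilon_0,\qquad\forall\,z\in P_{r_0/2}(z_0),\ 0<r\le r_0/2.
\end{equation*}
This translates into smallness of the parabolic Morrey norms: $u,\nabla d$ lie in $M^{4,4-4\alpha}_{\mathrm{par}}(P_{r_0/2}(z_0))$ and $\nabla u,\nabla^2 d,P$ lie in $M^{2,2-2\alpha}_{\mathrm{par}}(P_{r_0/2}(z_0))$, each with norm bounded by $C\epsilon_0$.

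\textbf{Step 2: Riesz potential representation.} Fix a cut-off $\eta\in C_0^\infty(P_{r_0/2}(z_0))$ with $\eta\equiv 1$ on $P_{r_0/4}(z_0)$. Convolving the Stokes fundamental solution with (\ref{ES2})$_1$ localized by $\eta$, and the heat kernel with (\ref{ES2})$_3$ localized by $\eta$, one obtains pointwise bounds, modulo terms that are smooth inside $P_{r_0/4}(z_0)$,
\begin{equation*}
|u(x,t)|+|\nabla d(x,t)|\lesssim I_1(|F|)(x,t)\quad\text{on } P_{r_0/4}(z_0),
\end{equation*}
where $I_1$ is the parabolic Riesz potential of order one. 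Using (\ref{ES2})$_3$ to eliminate $\partial_t d$ from $N$, one has $|\sigma^L(u,d)|\lesssim |\nabla u|+|u||\nabla d|+|\nabla^2 d|+|\nabla d|^2$, so $F$ is a sum of the quadratic quantities $|u|^2,|\nabla d|^2,|u||\nabla d|$ (which gain an integrability factor when passed through $I_1$) and the linear quantities $|\nabla u|,|\nabla^2 d|,|P|$, all of which were placed in Morrey class by Step 1.

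\textbf{Step 3: Bootstrap and conclusion.} The Adams-type embedding $I_1:M^{p,\lambda}_{\mathrm{par}}\to M^{\tilde p,\lambda}_{\mathrm{par}}$ with $\tilde p=\frac{(4-\lambda)p}{4-\lambda-p}$, valid for $\lambda<4-p$, applied to $F$ in Step 2 improves the integrability exponent for $u$ and $\nabla d$ by a definite amount while preserving the Morrey scale parameter. Plugging the improved exponent back into the nonlinear term $F$ and iterating finitely many times, the exponent $\tilde p$ diverges, yielding $(u,\nabla d)\in L^q_{\mathrm{loc}}(P_{r_0/4}(z_0))$ for every $q<\infty$. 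Since every step is linear in $\epsilon_0$, one obtains the quantitative bound (\ref{lq:estimate}).

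\textbf{Main obstacle.} The principal difficulty is the Leslie stress $\sigma^L(u,d)$, which contains the material derivative $N=\partial_t d+u\cdot\nabla d-\Omega\hat d$; a priori $\partial_t d$ is not in any Morrey space with decay. The remedy is to substitute (\ref{ES2})$_3$ to rewrite $\partial_t d$ in terms of $\nabla^2 d$, $u\cdot\nabla d$, $\nabla u$ and $|\nabla d|^2$, at the cost that the Stokes representation for $u$ now involves $\nabla^2 d$ on its right-hand side; thus $u,\nabla d$ and $\nabla^2 d$ have to be bootstrapped simultaneously. This closes because Step 1 already provides a common Morrey scaling parameter $\alpha$ for all five quantities $u,\nabla d,\nabla u,\nabla^2 d,P$, so the coupled Riesz-potential iteration improves the integrability of $(u,\nabla d)$ at every stage without loss of decay.
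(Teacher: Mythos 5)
Your Steps 1 and 2 follow the paper closely: iterate Lemma \ref{le:epsilon} to obtain Hölder decay of $\Phi$, translate this into parabolic Morrey bounds for $u,\nabla d$ (in $M^{4,4(1-\alpha)}$) and $\nabla u,\nabla^2 d,P$ (in $M^{2,4-2\alpha}$), and then represent $\nabla d$ and $u$ via Riesz potentials of the force terms using the heat and Oseen kernels. The paper treats $d$ and $u$ separately: it sets $w=\eta^2 d$ and bounds $|\nabla w|\lesssim\mathcal I_1(|F|)$, and it introduces an auxiliary Stokes solution $v$ with $|v|\lesssim\mathcal I_1(|X|)$, then uses that $d-w$ solves the homogeneous heat equation and $u-v$ solves the homogeneous Stokes system; your single merged bound $|u|+|\nabla d|\lesssim I_1(|F|)$ is informal but in the same spirit.

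Step 3 is where you depart from the paper and where your argument has a gap. The paper does \emph{not} bootstrap: it applies the Riesz-potential embedding $I_1:M^{2,2(2-\alpha)}\to M^{\frac{2(2-\alpha)}{1-\alpha},\,2(2-\alpha)}$ once, and then lets the Morrey exponent $\alpha\uparrow 1$. This is possible because the blow-up limit in Lemma \ref{le:epsilon2} satisfies a \emph{linear} decay $\Phi(\theta)\le C\theta$, so the decay ratio in Lemma \ref{le:epsilon} can be made $\le\theta_0^{\beta}$ for any $\beta<1$ by shrinking $\theta_0$, which yields Morrey decay with $\alpha$ arbitrarily close to $1$. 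Since $\lim_{\alpha\uparrow1}\frac{2(2-\alpha)}{1-\alpha}=\infty$, every finite $L^q$ is captured with a single potential estimate. Your proposed bootstrap, by contrast, does not close as described: the force $F$ in the heat (and Stokes) representation contains the \emph{linear} terms $\nabla u,\ \nabla^2 d,\ P$ (through $\Omega,\ A$ and the substitution for $\partial_t d$ in $N$), and these sit in $M^{2,4-2\alpha}$ independently of how much you improve the integrability of $u,\nabla d$. After one application of $I_1$, the quadratic pieces $|u|^2,|\nabla d|^2$ do upgrade to $M^{\tilde p/2,\lambda}$ with $\tilde p/2>2$, but the linear pieces remain at $M^{2,\lambda}$, so $F$ is still only $M^{2,\lambda}$ and the next Riesz step produces the \emph{same} exponent $\tilde p$ — the iteration stalls after one pass. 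Your paragraph ``Main obstacle'' acknowledges that $\nabla^2 d$ must be bootstrapped simultaneously, but ``Step 1 already provides a common Morrey scaling parameter'' does not supply a mechanism to improve the integrability exponent (not the scale parameter) of $\nabla u,\nabla^2 d,P$; you would need an additional parabolic Calderón–Zygmund estimate at each stage to upgrade the linear terms, which you neither state nor invoke. The paper's route of pushing $\alpha\uparrow1$ sidesteps this obstruction entirely.
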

\begin{proof} Set $r_1=\frac{r_0}2$.
Then it is easy to see that (\ref{eps00}) also holds for $(u,P,d)$ with $z_0, r_0$
replaced by $z_1, r_1$ for any $z_1\in P_{\frac{r_0}2}(z_0)$.
Applying lemma \ref{le:epsilon} for $(u,P,d)$ on
$P_{r_1}(z_1)$, we conclude that  there exists $\theta_0 \in (0,\frac12)$
such that for any $0<r\le r_1$, it holds that
\begin{equation*}
  \label{Phi:ineq}
  \Phi\left(u, d, P, z_1, \theta_0 r\right) \leq \frac{1}{2}\Phi\left(u, d, P, z_1, r\right)
\end{equation*}
Iterating this inequality $k$-times, $k\ge 1$,  yields
\begin{equation*}
  \label{iter:decay:ineq:1}
  \Phi\left(u, d, P, z_1, \theta_0^kr\right) \leq 2^{-k} \Phi\left(u, d, P, z_1, r\right).
\end{equation*}
It is well known that this implies that there exists $\alpha\in (0,1)$ such that
for any $0<\tau<r\le r_1$, it holds
\beq\label{morrey0}
\displaystyle\Phi\left(u, d, P, z_1, \tau\right)\le (\frac{\tau}{r})^\alpha\Phi\left(u, d, P,  z_1, r \right)
\eeq
holds for any $z_1\in P_{\frac{r_0}2}(z_0)$ and $0<r\le \frac{r_0}2$.

Now we proceed with the Riesz potential estimates of $(u,\nabla d)$ between Morrey spaces as follows.
First, let's recall the notion of Morrey spaces on $\mathbb R^2\times\mathbb R$, equipped with the parabolic metric ${\bf \delta}$:
$$\delta\Big((x,t), (y,s)\Big)=\max\Big\{|x-y|, \sqrt{|t-s|}\Big\}, \ \forall\ (x,t), \ (y,s)\in\mathbb R^2\times\mathbb R.$$
For any open set $U \subset \R^{2+1}$, $1 \leq p < +\infty$, and $0 \leq \lambda \leq 4$,
define the {Morrey Space} $M^{p,\lambda}(U)$ by
\begin{equation}
  M^{p,\lambda}(U) :=
  \left \{
    v\in L^p_{\rm{loc}}(U):
   \left \|v\right\|^p_{M^{p,\lambda}(U)} \equiv \sup_{z \in U, r>0} r^{\lambda - 4} \int_{P_r(z) \cap U} |v|^p < \infty
  \right \}.
  \label{morreyDef}
\end{equation}
It follows from (\ref{morrey0}) that for some $\alpha\in (0,1)$,
\begin{equation}
  u, \ \grad d \in M^{4,4(1-\alpha)} \left (P_{\frac{r_0}2}(z_0) \right), \ (\nabla u,\nabla^2 d, P)\in M^{2, 4-2\alpha}\left(P_{\frac{z_0}2}(z_0)\right).
  \label{morreyugradd}
\end{equation}
Write the equation (\ref{ES2})$_3$ as
\begin{equation}\label{director1}
  \partial_t d -\frac{1}{|\lambda_1|} \lap d = f, \ {\rm{with}}\ \ f:= \left(-u\cdot\nabla d+\Omega \hat d -\frac{\lambda_2}{\lambda_1}A \hat d
+\frac{1}{|\lambda_1|}|\grad d|^2d+\frac{\lambda_2}{\lambda_1}(\hat d^T A \hat d) d\right).
\end{equation}
By \eqref{morreyugradd}, we see that
\begin{equation*}
  f \in M^{2,2(2-\alpha)} \left (P_{\frac{r_0}2}(z_0) \right ).
\end{equation*}
As in  \cite{LW} and  \cite{Huang-Wang},  let $\eta \in C_0^\infty(\R^{2+1})$ be a cut-off function of $P_{\frac{r_0}2}(z_0)$:
$0\le\eta\le 1$, \ $\eta\equiv 1$ in $P_{\frac{r_0}2}(z_0)$, and $|\partial_t\eta|+|\nabla^2\eta|\le Cr_0^{-2}$.
Set $w = \eta^2 d$.  Then we have
\begin{equation}
  \label{Fdef}
  \partial_t w - \frac{1}{|\lambda_1|}\lap w =F, \ \ F:= \eta^2 f + (\partial_t \eta^2 -\frac{1}{|\lambda_1|} \lap \eta^2)(d-d_{z_0,\frac{r_0}2})
- \frac{2}{|\lambda_1|} \grad \eta^2 \cdot \grad d,
\end{equation}
where $d_{z_0, \frac{r_0}2}$ is the average of $d$ over $P_{\frac{r_0}2}(z_0)$.
It is easy to check that $F \in M^{2,2(2-\alpha)}(\R^{2+1})$ and satisfies the estimate
\begin{equation}\label{F-estimate}
  \begin{aligned}
    \Big\| F \Big\|_{M^{2,2(2-\alpha)}(\R^{2+1})} &\leq C\left [ \Phi\left(u, d, P, z_0, r_0\right)+ \|f\|_{M^{2,2(2-\alpha)}(P_{\frac{r_0}2}(z_0))}\right]
\le C\epsilon_0.
  \end{aligned}
\end{equation}
Let $\Gamma(x,t)$ denote the fundamental solution of the heat operator ($\partial_t-\frac{1}{|\lambda_1|}\Delta$) on $\mathbb R^2$. Then
by  the Duhamel formula for (\ref{Fdef}) and  the estimate (see also \cite{Huang-Wang} lemma 3.1):
$$|\nabla\Gamma|(x,t)\lesssim \frac{1}{\delta^{3}((x,t), (0,0))}, \ \forall (x,t)\not=(0,0),$$
we have
\begin{equation}
  \label{riesz:pot:est:1}
  \begin{aligned}
    |\grad w(x,t)|
    &\leq \int_0^t \int_{\R^2} |\grad \Gamma(x-y,t-s)||F(y,s)|
    \leq C \int_{\R^3} \frac{|F(y,s)|}{\delta^3((x,t),(y,s))}:
    = C\mathcal I_1(|F|)(x,t),
  \end{aligned}
\end{equation}
where $\mathcal I_\beta $ is the Riesz potential of order $\beta$ on $\mathbb R^3$ ($\beta\in [0,4]$),
defined by
\begin{equation}
  \label{riesz:pot:def}
  \mathcal{I}_\beta(g) = \int_{\R^{3}} \frac{|g(y,s)|}{\delta((x,t),(y,s))^{4-\beta}}, \ \ \forall\ g \in L^p(\mathbb R^{3}).
\end{equation}
Applying the Riesz potential estimates (see \cite{Huang-Wang} Theorem 3.1),  we conclude that
$\grad w \in M^{\frac{2(2-\alpha)}{1-\alpha}, 2(2-\alpha)}(\R^{3})$ and
\begin{equation}
  \label{morrey:gradw:est}
  \Big\|\grad w \Big\|_{M^{\frac{2(2-\alpha)}{1-\alpha}, 2(2-\alpha)}(\R^{3})}  \lesssim
 \Big\|F\Big\|_{M^{2,2(2-\alpha)}(\R^{3})}
\leq C\epsilon_0.
\end{equation}
Choosing $\alpha\uparrow 1$ and using
$\lim_{\alpha \uparrow 1}\frac{2(2-\alpha)}{1-\alpha} = +\infty$, we can conclude that
for any $1<q<\infty$,
$\nabla w\in L^q(P_{r_0}(z_0))$ and
\begin{equation}
  \label{gradd:L_m:m.gt.1}
  \Big\|\grad w \Big\|_{L^q\left ( P_{r_0}(z_0) \right )}\leq C(q, r_0)\epsilon_0.
\end{equation}
Since $(d-w)$ solves
$$\partial_t(d-w)-\frac{1}{|\lambda_1|}\Delta(d-w)=0 \ {\rm{in}}\ P_{\frac{r_0}2}(z_0),$$
it follows from the standard estimate on the heat equation
that for any $1<q<+\infty$, $\nabla d\in L^q(P_{\frac{r_0}4}(z_0))$ and
\begin{equation}
\label{estimate_d}
  \Big\|\grad d \Big\|_{L^q( P_{\frac{r_0}4}(z_0))}\leq C(q, r_0)\epsilon_0.
\end{equation}

Now we proceed with the estimation of $u$.  Let $v:\mathbb R^2\times [0,+\infty)\to\mathbb R^2$
solve the Stokes equation:
\begin{equation}
\begin{cases}
  \label{aux:stokes}
  \begin{aligned}
    \partial_t v -\frac{\mu_4}2\lap v + \grad Q&=  -\grad \cdot[\eta^2 (\grad d \odot \grad d + u \otimes u)]+\nabla\cdot[\eta^2(\sigma^L(u,d)-\mu_4 A)] &\text{ in } \R^2 \times (0,\infty),\\
    \grad\cdot v &= 0 &\text{ in } \R^2 \times (0,\infty), \\
    v(\cdot,0) &= 0 &\text{ in } \R^2.
  \end{aligned}
\end{cases}
\end{equation}
By using the Oseen kernel (see Leray \cite{Leray}), an estimate for $v$, similar to \eqref{riesz:pot:est:1},  can be given by
\begin{equation}
  \label{riesz:pot:est:2}
  |v(x,t)| \leq C \int_0^t\int_{\R^{2}} \frac{|X(y,s)|}{\delta((x,t),(y,s))^{3}}
\leq C \mathcal I_1(|X|)(x,t),  \ (x,t)\in\mathbb R^2\times (0,+\infty),
\end{equation}
where $X =-\eta^2(\grad d \odot \grad d + u \otimes u )+\eta^2(\sigma^L(u,d)-\mu_4 A)$. As above, we can check
that $X \in M^{2,2(2-\alpha)}(\R^{3})$ and
\begin{equation*}
 \Big\|X\Big\|_{M^{2,2(2-\alpha)}(\R^{3})}
\leq C \left [ \left\||u|+|\grad d|\right\|^2_{M^{4,4-\alpha}(P_{\frac{r_0}2}(z_0))}
+\left\||\nabla u|+|\nabla^2 d|\right\|_{M^{2, 4-2\alpha}(P_{\frac{r_0}2}(z_0))}\right]\le C\epsilon_0.
\end{equation*}
Hence, by  \cite{Huang-Wang} Theorem 3.1, we have that $v \in M^{\frac{2(2-\alpha)}{1-\alpha}, 2(2-\alpha)}(\R^{3})$,  and
\begin{equation}
  \label{morrey:v:est}
   \Big\|v \Big\|_{M^{\frac{2(2-\alpha)}{1-\alpha}, 2(2-\alpha)}(\R^{3})}
\leq C\Big\|X\Big\|_{M^{2,2(2-\alpha)}(\R^{3})}\le C\epsilon_0.
\end{equation}
By sending $\alpha\uparrow 1$, (\ref{morrey:v:est}) implies that for any $1<q<+\infty$,
$v \in L^q\left (P_{r_0}(z_0)\right )$ and
\begin{equation}\label{estimate_q}
\Big\|v \Big\|_{L^q\left (P_{r_0}(z_0)\right )}\le C(q, r_0)\epsilon_0.
\end{equation}
Since $(u-v)$ satisfies the linear homogeneous Stokes equation in $P_{\frac{r_0}2}(z_0)$:
\begin{equation*}
\partial_t (u-v) - \frac{\mu_4}2\lap (u-v) + \grad(P - Q) = 0, \ \grad \cdot (u-v) = 0\ \  \text{ in } \ \ P_{\frac{r_0}2}(z_0).
\end{equation*}
It is well-known that $(u-v) \in L^{\infty}(P_{\frac{r_0}4}(z_0))$.  Therefore we conclude that for any
$1<q<+\infty$, $u\in L^q(P_{\frac{r_0}4}(z_0))$, and
\begin{equation}
  \label{estimate_u}
 \Big\| u \Big\|_{L^q( P_{\frac{r_0}4}(z_0))}\le C(q, r_0) \epsilon_0.
\end{equation}
The estimate (\ref{lq:estimate}) follows from (\ref{estimate_d}) and (\ref{estimate_u}). This completes the proof.
\end{proof}

Now we utilize the integrability estimate (\ref{lq:estimate}) of ($u,\nabla d$) to prove the smoothness of $(u,d)$.\footnote{In fact, we only need to use
$(u, \nabla d)\in L^8$ in the proof for the cases $k=1,2$.}
The argument is based on local inequalities of higher order energy of $(u,\nabla d)$.

\begin{lemma} \label{higher-order-estimate}
Assume that the conditions (\ref{necessary}),
(\ref{parodi}), and (\ref{Leslie_condition}) hold. For any $0<T\le +\infty$ and a bounded domain $O\subset\R^2$, there exists $\epsilon_0>0$ such that
  $(u,P, d)\in L^\infty_t L^2_x\cap L^2_tH^1_x(O\times [0,T])\times L^2(O\times [0,T])\times (L^\infty_tH^1_x\cap L^2_tH^2(O\times [0,T],\mathbb S^2)$
is a suitable weak solution of (\ref{ES2}) , and satisfies, for $z_0=(x_0,t_0)\in O\times (0,T)$ and $P_{r_0}(z_0)\subset O\times (0, T)$,
  \begin{equation}\label{eps0}
   \Phi\left(u, d, P, z_0, r_0\right)
\leq \epsilon_0,
  \end{equation}
 then
$\displaystyle (\nabla^l u,\nabla^{l+1}d) \in \big(L^\infty_tL^2_x\cap L^2_tH^1_x\big)\big(P_{\frac{1+2^{-(l+1)}}2 r_0}(z_0)\big)$
for any $l\ge 0$, and the following estimate holds
  \beq
  \label{k2:estimate}
   &&\sup_{t_0-\frac{((1+2^{-(l+1)})r_0)^2} 4\le t\le t_0}\int_{B_{\frac{1+2^{-(l+1)}}2 r_0}(x_0)}
\left(|\nabla^ l u|^2+|\nabla^{l+1} d|^2\right)\nonumber\\
&&\qquad\qquad\qquad+\int_{P_{\frac{1+2^{-(l+1)}}2 r_0}(z_0)}\left(|\nabla^{l+1} u|^2+|\nabla^{l+2} d|^2
+|\nabla^l P|^2\right)\nonumber\\
&&\qquad\qquad\leq C(l) \epsilon_0.
  \eeq
\end{lemma}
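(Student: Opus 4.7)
The plan is an induction on $l \geq 0$. The base case $l=0$ follows from the local energy inequality (\ref{local_energy_ineq1}) of Lemma \ref{local_energy_ineq}: with a cutoff $\eta$ supported in $P_{\frac{3}{4}r_0}(z_0)$, every non-standard term on the right of (\ref{local_energy_ineq1}) carries a factor of $|u|$ or $|\nabla d|$ paired with $|\nabla\eta^2|$. These are bounded by H\"older's inequality together with the high-integrability estimate (\ref{lq:estimate}) of Lemma \ref{high_integrability}, the $L^2$ control of $(\nabla u,\nabla^2 d,P)$ coming from (\ref{eps0}), and Young's inequality to absorb the dissipative factors into the left-hand side.

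For the inductive step, assume the claim through index $l-1$. Apply a multi-index derivative $\nabla^\alpha$ with $|\alpha|=l$ to the Ericksen--Leslie system (\ref{ES2}). The principal part of the resulting system for $(\nabla^\alpha u,\nabla^\alpha d)$ is structurally the linear system (\ref{lq3}), so the same cancellations used in Lemma \ref{le:epsilon2} --- which depend crucially on (\ref{necessary}), (\ref{parodi}), and (\ref{Leslie_condition}) --- apply after testing the differentiated momentum equation against $(\nabla^\alpha u)\eta^2$ and the differentiated director equation against $\Delta(\nabla^\alpha d)\eta^2$. This yields the basic higher-order energy inequality
\begin{equation*}
\frac{d}{dt}\int\bigl(|\nabla^l u|^2+|\nabla^{l+1}d|^2\bigr)\eta^2 + \int\bigl(\tfrac{\mu_4}{2}|\nabla^{l+1}u|^2+\tfrac{1}{|\lambda_1|}|\nabla^{l+2}d|^2\bigr)\eta^2 \;\lesssim\; \mathcal{R}_l,
\end{equation*}
where $\mathcal{R}_l$ collects boundary-type terms involving $\nabla\eta^2$ together with commutators arising from differentiating the nonlinearities $u\otimes u$, $\nabla d\odot\nabla d$, $\sigma^L(u,d)$, $u\cdot\nabla d$, and $|\nabla d|^2 d$.

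Controlling $\mathcal{R}_l$ is where the real work sits. Each commutator is a product in which the total derivative count is $l+1$ or $l+2$ but at least one factor has order strictly less than the top; using the inductive hypothesis to bound the low-order factors in $L^\infty_t L^2_x\cap L^2_t H^1_x$, Ladyzhenskaya's inequality (\ref{lady}) to upgrade to $L^4_{t,x}$ control, and Lemma \ref{high_integrability} to handle $(u,\nabla d)$ factors in arbitrary $L^q$, every commutator splits into a piece absorbable by the dissipation on the left and a piece bounded by $C(l)\epsilon_0$. The pressure contribution is handled exactly as in (\ref{pressure-equation1})--(\ref{pressure-estimate2}) by applying Calderon--Zygmund theory to the differentiated Poisson equation for $\nabla^l P$. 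Finally, choose cutoffs adapted to the nested radii $\tfrac{1+2^{-(l+1)}}{2}r_0$ and, via Fubini, a good time-slice $t_\ast$ at which $\int(|\nabla^l u|^2+|\nabla^{l+1}d|^2)\eta^2(t_\ast)$ is comparable to its space-time average at the previous induction level; integrating in time from $t_\ast$ closes the induction.

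I expect the principal obstacle to be not a conceptual one but the combinatorial bookkeeping of commutators from $\nabla^\alpha\sigma^L(u,d)$ and $\nabla^\alpha(|\nabla d|^2 d)$. The cancellations forced by (\ref{Leslie_condition}) propagate to every derivative level because they act linearly on the top-order quantities $\nabla^\alpha A$, $\nabla^\alpha\Omega$, and the $\alpha$-differentiated director equation; only the subleading commutator terms ever require the arbitrary-order integrability afforded by Lemma \ref{high_integrability}.
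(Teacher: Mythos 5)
Your overall strategy matches the paper's: induction on $l$, with the base case from Lemma \ref{local_energy_ineq}, and the inductive step by applying $\nabla^\alpha$ to (\ref{ES2}), testing against $(\nabla^\alpha u)\eta^2$ and $\Delta(\nabla^\alpha d)\eta^2$, exploiting the same cancellations as in Lemma \ref{le:epsilon2}, and using the inductive hypothesis plus Ladyzhenskaya and Lemma \ref{high_integrability} to handle commutator terms, with the pressure recovered by Calder\'{o}n--Zygmund theory. All of that is correct and is exactly what the paper does.

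However, there is a genuine gap in the step where you claim that ``every commutator splits into a piece absorbable by the dissipation on the left and a piece bounded by $C(l)\epsilon_0$.'' After applying Ladyzhenskaya's inequality to the top-order factors such as $|\nabla^k u|^4$ or $|\nabla^{k+1}d|^4$, the resulting commutator estimates contain the critically scaling terms
\[
\Big(\int_{B_2}(|\nabla^k u|^2+|\nabla^{k+1}d|^2)\eta^2\Big)\int_{B_2}(|\nabla^{k+1}u|^2+|\nabla^{k+2}d|^2)\eta^2
\quad\text{and}\quad
\Big(\int_{\mathrm{spt}\,\eta}(|\nabla^k u|^2+|\nabla^{k+1}d|^2)\Big)\int_{B_2}(|\nabla^k u|^2+|\nabla^{k+1}d|^2)\eta^2.
\]
The first is absorbable into the dissipation only if the prefactor $\int_{B_2}(|\nabla^k u|^2+|\nabla^{k+1}d|^2)\eta^2$ is itself small, but this prefactor lives at the \emph{current} induction level $k$, so it is not provided by the inductive hypothesis. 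The second is a Gronwall-type term. The paper resolves this with a continuity (bootstrap) argument: define $T_*$ to be the largest time up to which $\int_{B_2}(|\nabla^k u|^2+|\nabla^{k+1}d|^2)\eta^2(t) < C_1\epsilon_0$, use this a priori smallness to absorb the first critical term, apply Gronwall's inequality to handle the second, and show that strict inequality persists at $T_*$, forcing $T_*$ to coincide with the final time. Without this bootstrap-plus-Gronwall step the induction does not close; it is the central technical device of the proof and should be named explicitly. Your phrase ``integrating in time from $t_*$ closes the induction'' glosses over this mechanism.
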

\proof For simplicity, assume $z_0=(0,0)$ and $r_0=2$. We will prove (\ref{k2:estimate}) by an induction on $l\ge 0$. \\
(i) $l=0$: (\ref{k2:estimate}) follows from the local energy inequality (\ref{local_energy_ineq1}), similar to that given by lemma 3.1.\\
(ii) $l\ge 1$:  Suppose that (\ref{k2:estimate}) holds for $l\le k-1$. We want to show (\ref{k2:estimate}) also holds for
$l=k$. From the hypothesis of induction, we have that for all $0\le l\le k-1$,
\beq\label{induction-hypothesis}
&&\sup_{-(1+2^{-(l+1)})^2\le t\le 0}\int_{B_{1+2^{-(l+1)}}}(|\nabla^l u|^2+|\nabla^{l+1}d|^2)\nonumber\\
&&+\int_{P_{1+2^{-(l+1)}}}(|\nabla^{l+1} u|^2+|\nabla^{l+2} d|^2+|\nabla^l P|^2)\le C(l)\epsilon_0.
\eeq
Hence by the  Ladyzhenskaya inequality (\ref{lady}) we have
\beq\label{l4-estimate-ud}
\int_{P_{1+2^{-(l+1)}}}\left(|\nabla^lu|^4+|\nabla^{l+1}d|^4\right)\le C(l)\epsilon_0, \ \forall 0\le l\le k-1.
\eeq
By lemma \ref{high_integrability}, we also have
\beq\label{lq:estimate2}
\|u\|_{L^q(P_{\frac32})}+\|\nabla d\|_{L^q(P_{\frac32})}\le C(q)\epsilon_0, \ \forall 1<q<+\infty.
\eeq
 Take $k$-th order spatial derivative $\nabla^k$ of the equation (\ref{ES2})$_1$, we have\footnote{Strictly speaking we need to take the finite quotient
$D_h^i \nabla^{k-1}$ of (\ref{ES2})$_1$ and then taking limit as $h$ tends to zero.} 
\beq
    \label{kth-equation}
    \partial_t (\nabla^k u)+\nabla^k\nabla\cdot(u\otimes u)+\nabla^k\nabla P=-\nabla^k\nabla\cdot(\nabla d\odot\nabla d)+\nabla^k\nabla\cdot(\sigma^L(u,d)).
\eeq
Let $\eta \in C^\infty_0(B_2)$ such that
$$0\le\eta\le 1,\ \eta\equiv 1 \ {\rm{in}}\  B_{1+2^{-(k+1)}}, \ \eta=0 \ {\rm{outside}} \ B_{1+2^{-k}},
\ |\nabla\eta|+|\nabla^2\eta|\leq 2^{k+4}.$$
Multiplying (\ref{kth-equation}) by $\nabla^k u\eta^2$ and integrating over $B_2$, 
we obtain\footnote{Strictly speaking, we need to multiply the equation by $D_h^i\nabla^{k-1}u \eta^2$.}

\beq
    \label{nabla_u_gronwall}
   \frac{d}{dt} \int_{B_{2}}\frac12|\nabla^k u|^2\eta^2&=&\int_{B_2}\nabla^k(u\otimes u):\nabla(\nabla^ku\eta^2)+\int_{B_2}\nabla^kP \cdot \nabla^k u\cdot\nabla (\eta^2)
\nonumber\\
&&+\int_{B_2}\nabla^k(\nabla d\odot\nabla d):\nabla(\nabla^ku\eta^2)-\int_{B_2} \nabla^k(\sigma^L(u,d)):\nabla(\nabla^k u\eta^2)\nonumber\\
&:=&I_1+I_2+I_3+I_4.
\eeq
We estimate $I_1, I_2, I_3$ as follows.  Applying  H\"older's inequality and the following interpolation inequality:
\beq
    \label{nabel_u_L4}
   \int_{B_{2}}|f|^4\eta^4\lesssim (\int_{B_{2}}|f|^2\eta^2)\left(\int_{B_{2}}|\nabla f|^2\eta^2+\int_{B_{2}}|f|^2|\nabla\eta|^2\right),
 \ \forall\  f\in H^1(\R^2),
\eeq
we have
\beq
    \nonumber
   |I_1|&\lesssim &\int_{B_{2}}[|u||\nabla^ku|+\sum_{j=1}^{k-1} |\nabla^j u||\nabla^{k-j} u|](|\nabla^{k+1}u|\eta^2+|\nabla^k u|\eta|\nabla\eta|)
    \\
\nonumber
  &\leq&
    \left(\delta+C\int_{B_{2}}|\nabla^k u|^2\eta^2\right)\int_{B_{2}}|\nabla^{k+1}u|^2\eta^2+C\sum_{j=0}^{k-1}\int_{\rm{spt}\eta}|\nabla^j u|^4\nonumber\\
    &&+C\left(\int_{\rm{spt}\eta}|\nabla^k u|^2+\int_{\rm{spt}\eta}|\nabla^k u|^2\int_{B_{2}}|\nabla^ku|^2\eta^2\right)
\eeq
where $\delta>0$ is a small constant to be chosen later.
For $I_2$ and $I_3$, we have
\beq
|I_2|&\lesssim& \int_{B_2}|\nabla^{k-1}P|(|\nabla^{k+1} u|\eta|\nabla\eta|+|\nabla^k u||\nabla^2(\eta^2)|)\nonumber\\
&\leq& \delta\int_{B_2}|\nabla^{k+1} u|^2\eta^2+C\int_{\rm{spt}\eta}(|\nabla^{k-1}P|^2+|\nabla^k u|^2).
\eeq
\beq
    \nonumber
    |I_3|&\lesssim &\int_{B_{2}}(|\nabla d||\nabla^{k+1}d|+\sum_{j=1}^{k-1}|\nabla^{j+1}d||\nabla^{k+1-j}d|)(|\nabla^{k+1} u|\eta^2+|\nabla^ku||\nabla(\eta^2)|)
    \\
&\leq&\left(\delta+C\int_{B_2}|\nabla^{k+1}d|^2\eta^2\right)\int_{B_{2}}(|\nabla^{k+1}u|^2+|\nabla^{k+2} d|^2)\eta^2
+C\int_{\rm{spt}\eta}(|\nabla u|^2+\sum_{j=1}^k|\nabla^j d|^4)
\nonumber\\
&&+C\left(\int_{\rm{spt}\eta}(|\nabla^k u|^2+|\nabla^{k+1} d|^2)+\int_{\rm{spt}\eta}|\nabla^{k+1}d|^2\int_{B_2}|\nabla^{k+1} d|^2\eta^2\right).
\eeq
For $I_4$, we need to proceed as follows. Set
$$\sigma^L_k(u, d)
=\mu_1(\hat d\otimes \hat d: \nabla^kA)\hat d\otimes \hat d+\mu_2 \nabla^k\hat N\otimes \hat d +\mu_3 \hat d\otimes \nabla^k\hat N
+\mu_4 \nabla^k A+\mu_5(\nabla^kA\cdot\hat d)\otimes \hat d+\mu_6 \hat d\otimes(\nabla^kA\cdot\hat d),$$
and
$$\omega^L_k(u, d):=\nabla^k(\sigma^L(u,d))-\sigma^L_k(u,d).$$
Then we have
\beq\label{highest_order}
-\int_{B_2}\nabla^k(\sigma^L(u,d)):\nabla(\nabla^k u\eta^2)
&=&-\int_{B_2}\sigma^L_k(u,d):(\nabla^{k+1} u)\eta^2-\int_{B_2} \sigma^L_k(u,d): \nabla^k u\otimes\nabla (\eta^2)\nonumber\\
&&-\int_{B_2}\omega_k^L(u,d):(\nabla^{k+1}u \eta^2+\nabla^k u\otimes \nabla(\eta^2))\nonumber\\
&:=&J_1+J_2+J_3.
\eeq
To estimate $J_1, J_2, J_3$, we take $\nabla^k$ of the equation (\ref{ES2})$_3$ to get\footnote{Strictly speaking, we need to take $D_h^i\nabla^{k-1}$ of the equation.}
\beq\label{kth-ES2-3}
\nabla^k N +\frac{\lambda_2}{\lambda_1} \nabla^k(A \hat d)=\frac{1}{|\lambda_1|} \left(\Delta \nabla^k d+\nabla^k(|\nabla d|^2 d)\right)
+\frac{\lambda_2}{\lambda_1}\nabla^k\left((\hat d^T A\hat d)d\right).
\eeq
Since $|d|=1$, we have that $|\nabla d|^2=-\langle \Delta d, d\rangle$. Denote by $\#$ the multi-linear map with constant coefficients.
It is well known (see \cite{BKM} ) that for any $l\ge 0$,
$$\|\nabla^l(d\#d)\|_{L^2}\lesssim \|d\|_{L^\infty}\|\nabla^l d\|_{L^2}\lesssim \|\nabla^l d\|_{L^2},$$
and
$$\|\nabla^l(d\#d\#d)\|_{L^2}\lesssim \|d\|_{L^\infty}\|\nabla^l (d\#d)\|_{L^2}+\|d\#d\|_{L^\infty}\|\nabla^l d\|_{L^2}\lesssim \|\nabla^l d\|_{L^2}.$$
Therefore by (\ref{induction-hypothesis}) and (\ref{l4-estimate-ud}) we have that for any $0\le l\le k-1$,
$$\nabla(d\#d),\nabla(d\#d\#d)\in \Big(L^\infty_t H^{l}_x\cap L^2_tH^{l+1}_x\cap L^4\Big)\Big(P_{1+2^{-(l+1)}}\Big).$$
and
\beq\label{double-d-estimate}
\Big\||\nabla(d\#d)|+|\nabla(d\#d\#d)|\Big\|_{L^4\cap L^\infty_t H^{l}_x\cap L^2_tH^{l+1}_x(P_{1+2^{-(l+1)}})}\le C(l)\epsilon_0.
\eeq
The estimate (\ref{double-d-estimate}) also holds for $d\#d\#d\#d$.
Applying the equation (\ref{kth-ES2-3}) we have
\beq
&&|J_2|\lesssim \int_{B_2}(|\nabla^{k+1}u|+|\nabla^k N|)|\nabla^k u|\eta|\nabla\eta|\nonumber\\
&&\lesssim
\int_{B_2}\Big[|\nabla^{k+1}u|+|\nabla^{k+2}d|+\sum_{j=1}^k |\nabla^j u||\nabla^{k-j+1}d|\Big]|\nabla^k u|\eta|\nabla\eta|\nonumber\\
&&+\Big[\sum_{l=0}^{k-1} |\nabla^{l+2}d| |\nabla^{k-l}(d\#d)|\Big]|\nabla^k u|\eta|\nabla\eta|\nonumber\\
&&+\Big[\sum_{l=0}^{k-1}|\nabla^{k-l}(d\#d\#d)||\nabla^{l+1} u|\Big]|\nabla^ku|\eta|\nabla\eta|:=J_{21}+J_{22}+J_{23}.\nonumber
\eeq
Direct calculations imply
\beq\nonumber
J_{21}&\le& \left[\delta+C\int_{B_{2}}(|\nabla^k u|^2+|\nabla^{k+1}d|^2)\eta^2\right]\int_{B_{2}}(|\nabla^{k+1}u|^2+|\nabla^{k+2} d|^2)\eta^2\\
&+&C\int_{\rm{spt}\eta}(|\nabla^k u|^2+|\nabla^{k+1} d|^2)\int_{B_2}(|\nabla^k u|^2+|\nabla^{k+1} d|^2)\eta^2+C\sum_{i=1}^k \int_{\rm{spt}\eta}(|\nabla^id|^4+|\nabla^i u|^2),\nonumber
\eeq
\beq
J_{22}&\le& \int_{B_2}\Big[|\nabla^k d||\nabla d|^2+|\nabla ^{k+1}d||\nabla d|+\sum_{l=0}^{k-2}|\nabla^{l+2}d||\nabla^{k-l}(d\#d)|\Big]|\nabla^k u|\eta|\nabla\eta|\nonumber\\
&\le& C\int_{B_{2}}|\nabla^k u|^2\eta^2\int_{B_{2}}|\nabla^{k+1}u|^2\eta^2+C\int_{\rm{spt}\eta}|\nabla^k u|^2\int_{B_2}|\nabla^k u|^2\eta^2\nonumber\\
&&+C\int_{\rm{spt}\eta}\Big[|\nabla d|^8+|\nabla^k d|^2+|\nabla^{k+1}d|^2+\sum_{i=1}^k |\nabla^id|^4\Big],\nonumber
\eeq
and
\beq\nonumber
J_{23}&\le& \int_{B_2}\Big[|\nabla^k u||\nabla d|+\sum_{l=1}^{k-1}|\nabla^{k+1-l} (d\#d\#d)||\nabla^{l} u|\Big]|\nabla^k u|\eta|\nabla\eta|
\\
&\lesssim&C\int_{B_{2}}(|\nabla^k u|^2+|\nabla^{k+1}d|^2)\eta^2\int_{B_{2}}(|\nabla^{k+1}u|^2+|\nabla^{k+2} d|^2)\eta^2\nonumber\\
&+&C\int_{\rm{spt}\eta}(|\nabla d|^8+|\nabla^k u|^2+|\nabla^{k+1}d|^2)
+C\sum_{j=0}^{k-1}\int_{\rm{spt}\eta}(|\nabla^j u|^4+|\nabla^{j+1}d|^4)\nonumber\\
&+&
C\int_{\rm{spt}\eta}(|\nabla^k u|^2+|\nabla^{k+1} d|^2)\int_{B_2}(|\nabla^k u|^2+|\nabla^{k+1} d|^2)\eta^2.\nonumber
\eeq
By the definition of $\omega^L_k(u,d)$ and the equation (\ref{ES2})$_3$, we have
\beq\label{omega-k}
&&|\omega^L_k(u,d)|\lesssim\sum_{l=1}^{k}|\nabla^l u|(|\nabla^{k+1-l}(d\#d)|+|\nabla^{k+1-l}(d\#d\#d)|)
 +\sum_{l=0}^{k-1}|\nabla^l\hat N||\nabla^{k-l} d|\nonumber\\
&&\lesssim \sum_{l=1}^{k}|\nabla^l u|(|\nabla^{k+1-l}(d\#d)|+|\nabla^{k+1-l}(d\#d\#d)|)+|\nabla d||\nabla^{k+1}d|
+\sum_{l=2}^{k}|\nabla^{l}d|^2\nonumber\\
&&\quad+\sum_{l=0}^{k-1}\Big[|\nabla^l (Ad)|+|\nabla^l(\nabla^2 d\#d\#d)|
+|\nabla^l(A\#d\#d\#d)\Big]|\nabla^{k-l}d|\nonumber\\
&&\lesssim (|\nabla^k u|+|\nabla^{k+1}d|)(|\nabla d|+|\nabla(d\#d)|+|\nabla(d\#d\#d)|)
\nonumber\\
&&\quad+\sum_{l=1}^k (|\nabla^{l-1}u|^2+|\nabla^l d|^2+|\nabla^l(d\#d)|^2+|\nabla^l(d\#d\#d)|^2).
\eeq
Hence we can estimate
\beq
|J_3|&\lesssim&\int_{B_2}|\omega_k^L(u,d)|(|\nabla^{k+1} u|\eta^2+|\nabla^k u|\eta|\nabla\eta|)\nonumber\\
&\leq &
\left[\delta+C\int_{B_2}(|\nabla^k u|^2+|\nabla^{k+1}d|^2)\eta^2\right] \int_{B_2}(|\nabla^{k+1} u|^2+|\nabla^{k+2} d|^2)\eta^2
+C\int_{\rm{spt}\eta}(|\nabla^k u|^2+|\nabla^{k+1} d|^2)\nonumber\\
&&+C\sum_{l=1}^k \int_{\rm{spt}\eta}(|\nabla^{l-1}u|^4+|\nabla^l d|^4+|\nabla^l(d\#d)|^4+|\nabla^l(d\#d\#d)|^4)\nonumber\\
&&+C\int_{\rm{spt}\eta}(|\nabla^k u|^2+|\nabla^{k+1}d|^2)\int_{B_2}(|\nabla^k u|^2+|\nabla^{k+1} d|^2)\eta^2.
\eeq
The most difficult term to handle is $J_1$, since the integrands involve terms consisting of the highest order factors
$\nabla^{k+1}u$ and $\nabla^{k+2}d$. Here we need to apply (\ref{ES2}) to cancel some of those terms and employ
the condition (\ref{Leslie_condition}) to argue that other terms are non-positive. For this, we proceed as follows
(similar to lemma 3.2).
\beq\label{J1-estimate}
    \nonumber
 J_1&=&-\int_{B_2}\Big[\mu_1(\hat d\otimes \hat d: \nabla^kA)\hat d\otimes \hat d+\mu_2 \nabla^k\hat N\otimes \hat d +\mu_3 \hat d\otimes \nabla^k\hat N\nonumber\\
&&\qquad\ \ \ +\mu_4 \nabla^kA+\mu_5(\nabla^kA \hat d)\otimes \hat d+\mu_6 \hat d\otimes(\nabla^kA\hat d)\Big]:\Big[\nabla^kA\eta^2+\nabla^k\Omega\eta^2\Big]\nonumber\\
&=&-\int_{B_2}\mu_1 |\hat d^T \nabla^kA\hat d|^2\eta^2+\mu_4 |\nabla^kA|^2\eta^2+(\mu_5+\mu_6)|\nabla^kA\hat d|^2\eta^2 \nonumber\\
&&\qquad\ \ +\lambda_1  \nabla^k\hat N\cdot(\nabla^k\Omega\hat d)\eta^2-\lambda_2\nabla^k\hat N\cdot(\nabla^kA\hat d)\eta^2+\lambda_2(\nabla^kA\hat d)(\nabla^k\Omega\hat d)\eta^2.
\eeq
Applying the equation (\ref{kth-ES2-3}), we obtain
\beq\label{n_hat1}
&&\lambda_1 \nabla^k\hat N\cdot(\nabla^k\Omega\hat d)+\lambda_2 (\nabla^kA\hat d)(\nabla^k\Omega\hat d)
=-\langle \nabla^k\Omega\hat d, \Delta \nabla^k\hat d\rangle
-\langle \nabla^k(|\nabla d|^2 \hat d), \nabla^k\Omega \hat d\rangle\nonumber\\
&&\qquad\qquad\qquad\qquad+\lambda_2\Big \langle \nabla^k((\hat d^T A\hat d)\hat d), \nabla^k\Omega\hat d\Big\rangle
-\lambda_2\Big\langle \nabla^k(A\hat d)-\nabla^k A \hat d, \nabla^k \Omega\hat d\Big\rangle,
\eeq
and
\beq\label{n_hat2}
-\lambda_2\nabla^k\hat N(\nabla^kA\hat d)&=&\frac{\lambda_2}{\lambda_1}\Big\langle \Delta \nabla^k\hat d, \nabla^kA\hat d\Big\rangle
-\frac{\lambda_2^2}{\lambda_1}|\hat d^T \nabla^kA d|^2+\frac{\lambda_2^2}{\lambda_1}|\nabla^kA\hat d|^2\nonumber\\
&&+\frac{\lambda_2^2}{\lambda_1}
\Big\langle \nabla^k(A \hat d)-\nabla^kA\hat d,  \nabla^k A\hat d\Big\rangle+\frac{\lambda_2}{\lambda_1}\Big\langle \nabla^k(|\nabla d|^2 \hat d), \nabla^k A\hat d\Big\rangle\nonumber\\
&&+\frac{\lambda_2^2}{\lambda_1}\Big\langle(\hat d^T \nabla^k A\hat)\hat d- \nabla^k((\hat d^T A \hat d)\hat d), \nabla^k A\hat d\Big\rangle.
\eeq
Putting (\ref{n_hat1}) and (\ref{n_hat2}) into (\ref{J1-estimate}) yields
\beq\label{J1-estimat1}
&&J_1\nonumber\\
&=&-\int_{B_2}[(\mu_1-\frac{\lambda_2^2}{\lambda_1})|\hat d^T \nabla^kA\hat d|^2+\mu_4 |\nabla^kA|^2+(\mu_5+\mu_6+\frac{\lambda_2^2}{\lambda_1})|\nabla^kA\hat d|^2+\langle \frac{\lambda_2}{\lambda_1} \nabla^kA\hat d-\nabla^k\Omega\hat d, \Delta \nabla^k\hat d\rangle]\eta^2\nonumber\\
&-&\int_{B_2}\Big[\Big\langle\lambda_2\nabla^k ((\hat d^T A\hat d)\hat d)-\nabla^k(|\nabla d|^2\hat d)
-\lambda_2\left(\nabla^k(A\hat d)-\nabla^k A\hat d\right), \nabla^k\Omega\hat d\Big\rangle
+\frac{\lambda_2}{\lambda_1}\Big\langle \nabla^k(|\nabla d|^2 \hat d), \nabla^k A\hat d\Big\rangle\nonumber\\
&+&\frac{\lambda_2^2}{\lambda_1}\Big\langle \nabla^k(A \hat d)-\nabla^k A\hat d,  \nabla^k A\hat d\Big\rangle
+\frac{\lambda_2^2}{\lambda_1}\Big\langle (\hat d^T \nabla^k A\hat d)\hat d-\nabla^k((\hat d^T A \hat d)\hat d), \nabla^k A\hat d\Big\rangle\Big]\eta^2.
\eeq
Multiplying the equation (\ref{kth-ES2-3}) by $\Delta \nabla^kd\eta^2$ and integrating over $B_{2}$, we have\footnote{Strictly speaking, we need to
multiply the equation by $\Delta D_h^i\nabla^{k-1}d \eta^2$.}
\beq
    \label{nabla^2d_gronwall}
   && \frac{d}{dt}\int_{B_2}\frac12|\nabla^{k+1}d|^2\eta^2+\frac{1}{|\lambda_1|}\int_{B_2}|\Delta \nabla^k d|^2\eta^2=\int_{B_2} \Big\langle\frac{\lambda_2}{\lambda_1}\nabla^kA\hat d-\nabla^k\Omega\hat d, \Delta \nabla^k\hat d\Big\rangle\eta^2\nonumber\\
&& +\int_{B_2}\Big\langle \nabla^k(u\cdot \nabla d)+(\nabla^k\Omega \hat d-\nabla^k(\Omega\hat d))+\frac{\lambda_2}{\lambda_1} (\nabla^k(A\hat d)-\nabla^k A\hat d)
+\frac{1}{\lambda_1}\nabla^k(|\nabla d|^2 d), \Delta \nabla^kd\Big\rangle \eta^2\nonumber\\
&&-\int_{B_2}\partial_t \nabla^kd\cdot\nabla \nabla^kd\cdot\nabla \eta^2
-\frac{\lambda_2}{\lambda_1}\int_{B_2}\Big\langle \nabla^k((\hat d^T A \hat d)d),\Delta \nabla^kd\Big\rangle\eta^2.
\eeq
Adding (\ref{nabla_u_gronwall}) and (\ref{nabla^2d_gronwall}), using  (\ref{Leslie_condition}), we obtain
\beq\label{2nd_order_inequality}
&&\frac{d}{dt} \int_{B_{2}}(|\nabla^k u|^2+|\nabla^{k+1} d|^2)\eta^2
+\int_{B_2}\left(2\mu_4|\nabla^k A|^2+\frac{2}{|\lambda_1|}|\nabla^{k+2} d|^2\right)\eta^2
\le I_1+I_2+I_3+J_2+J_3\nonumber\\
&&+\int_{B_2}\Big\langle \nabla^k(u\cdot \nabla d)+(\nabla^k\Omega \hat d-\nabla^k(\Omega\hat d))+\frac{\lambda_2}{\lambda_1}
(\nabla^k (A\hat d)-\nabla^k A\hat d), \Delta \nabla^kd\Big\rangle \eta^2\nonumber\\
&&-\int_{B_2}\partial_t \nabla^kd\cdot\nabla \nabla^kd\cdot\nabla \eta^2
+\int_{B_2}\Big\langle \frac{1}{\lambda_1}\nabla^k(|\nabla d|^2 d)-\frac{\lambda_2}{\lambda_1}\nabla^k((\hat d^T A \hat d)d),\Delta \nabla^kd\Big\rangle\eta^2\nonumber\\
&&-\int_{B_2}\Big[\Big\langle\lambda_2\nabla^k ((\hat d^T A\hat d)\hat d)-\nabla^k(|\nabla d|^2\hat d)
-\lambda_2\left(\nabla^k(A\hat d)-\nabla^k A\hat d\right), \nabla^k\Omega\hat d\Big\rangle
+\frac{\lambda_2}{\lambda_1}\Big\langle \nabla^k(|\nabla d|^2 \hat d), \nabla^k A\hat d\Big\rangle\nonumber\\
&&+\frac{\lambda_2^2}{\lambda_1}\Big\langle \nabla^kA \hat d-\nabla^k(A\hat d),  \nabla^k A\hat d\Big\rangle
-\frac{\lambda_2^2}{\lambda_1}\Big\langle \nabla^k((\hat d^T A \hat d)\hat d)-(\hat d^T \nabla^k A\hat d)\hat d, \nabla^k A\hat d\Big\rangle\Big]\eta^2
\nonumber\\
&:=&I_1+I_2+I_3+J_2+J_3+K_1+K_2+K_3+K_4.
\eeq
The terms $K_1, K_2, K_3, K_4$ can be estimated as follows.
\beq\label{K1-estimate}
&&|K_1|\lesssim\int_{B_2}\big[|\nabla d||\nabla^k u|+|u||\nabla^{k+1}d|+\sum_{l=1}^{k-1}(|\nabla^l u|^2+|\nabla^{l+1}d|^2)\big] |\nabla^{k+2}d|\eta^2\nonumber\\
&&\leq \big[\delta+C\int_{B_2}(|\nabla^k u|^2+|\nabla^{k+1} d|^2)\eta^2\big]\int_{B_2}(|\nabla^{k+1} u|^2+|\nabla^{k+2} d|^2)\eta^2
+C\sum_{l=0}^{k-1}\int_{\rm{spt}\eta}(|\nabla^l u|^2+|\nabla^{l+1}d|^2)\nonumber\\
&&+C\int_{\rm{spt}\eta}(|\nabla^ku|^2+|\nabla^{k+1} d|^2)
\int_{B_2}(|\nabla^k u|^2+|\nabla^{k+1} d|^2)\eta^2,
\eeq
For $K_2$, by the equation (\ref{kth-ES2-3}) and the fact $|\nabla d|^2=-\langle d,\Delta d\rangle$ we have
$$|\partial_t \nabla^k d|\lesssim |\nabla^k(u\cdot\nabla d)+|\nabla^k(\Omega\hat d)|+|\nabla^k(A\hat d)|+|\nabla^{k+2}d|
+|\nabla^k((\hat d^T A\hat d)d)|+|\nabla^k(\langle d,\Delta d\rangle d)|,
$$
so that
\beq
\label{K2-estimate}
| K_2|&\lesssim&\int_{B_2}\big[|\nabla^{k+2}d|+|\nabla^{k+1}u|+|\nabla^{k+1}d|(|u|+|\nabla d|)+|\nabla^k u|(|\nabla d|+|\nabla(d\#d\#d)|)\nonumber\\
&&+\sum_{l=2}^k (|\nabla^{l-1}u|^2+|\nabla^ld |^2+|\nabla^l(d\#d)|^2+|\nabla^l(d\#d\#d)|^2)\big]|\nabla^{k+1}d|\eta|\nabla\eta|\nonumber\\
&\le&\big[\delta+C\int_{B_2}(|\nabla^k u|^2+|\nabla^{k+1}d|^2)\eta^2\big]\int_{B_2}(|\nabla^{k+1}u|^2+|\nabla^{k+2} d|^2)\eta^2\nonumber\\
&&+\sum_{l=1}^k\int_{\rm{spt}\eta}(|\nabla^{l-1} u|^4+|\nabla^l d|^4+|\nabla^l(d\#d)|^4+|\nabla^l(d\#d\#d)|^4)\\
&+&C\int_{\rm{spt}\eta}(|\nabla^k u|^2+|\nabla^{k+1}d|^2)
+C\int_{\rm{spt}\eta}(|\nabla^k u|^2+|\nabla^{k+1}d|^2)\int_{B_2}(|\nabla^{k}u|^2+|\nabla^{k+1} d|^2)\eta^2.
\nonumber
\eeq
For $K_4$, we first estimate the terms inside the integrand. Since
$$\langle\nabla^k(\hat d^TA\hat d) \hat d, \nabla^k\Omega\hat d\rangle =0,$$
it follows
\beq\nonumber
&&|\langle\nabla^k((\hat d^TA\hat d) \hat d), \nabla^k\Omega\hat d\rangle|
=|\langle\nabla^k((\hat d^TA\hat d) \hat d)-\nabla^k(\hat d^TA\hat d) \hat d, \nabla^k\Omega\hat d\rangle|\nonumber\\
&&\lesssim \big(|\nabla^k u||\nabla d|+\sum_{l=0}^{k-2}|\nabla^{l+1}u||\nabla^{k-l}(d\#d\#d)|\big)|\nabla^{k+1}u|\nonumber\\
&&\lesssim \big[|\nabla^k u||\nabla d|+\sum_{l=1}^{k-1}(|\nabla^{l}u|^2+|\nabla^{l+1}(d\#d\#d)|^2)\big]|\nabla^{k+1}u|.\nonumber
\eeq
We also have
\beq\nonumber
&&|\langle\nabla^k(|\nabla d|^2 d), \nabla\Omega \hat d\rangle|
=|\langle\nabla^k(|\nabla d|^2 d)-\nabla^k(|\nabla d|^2)\hat d, \nabla^k\Omega \hat d\rangle|\nonumber\\
&&\lesssim [|\nabla^k d||\nabla d|^2+\sum_{l=0}^{k-2}\nabla^l(|\nabla d|^2)|\nabla^{k-l}d|]|\nabla^{k+1}u|\nonumber\\
&&\lesssim \big[|\nabla^k d||\nabla d|^2+\sum_{l=2}^{k}(|\nabla^ld|^2+|\nabla^l(d\#d)|^2\big]|\nabla^{k+1}u|,\nonumber
\eeq
\beq\nonumber
&&|\langle\nabla^k(A\hat d)-\nabla^k A\hat d, \nabla^k\Omega \hat d\rangle|
\lesssim \big[|\nabla^k u||\nabla d|+\sum_{l=1}^{k-1}(|\nabla^l u|^2+|\nabla^{l+1}d|^2)\big]|\nabla^{k+1}u|,\nonumber
\eeq
\beq\nonumber
&&|\langle\nabla^k(|\nabla d|^2 d),\nabla^k A\hat d\rangle|
\lesssim \big[|\nabla^{k+1}d||\nabla d|+|\nabla^k d||\nabla d|^2+\sum_{l=2}^k (|\nabla^l d|^2+|\nabla^l(d\#d)|^2)\big]|\nabla^{k+1}u|,
\nonumber
\eeq
and
\beq\nonumber
|\big\langle \nabla^kA \hat d-\nabla^k(A\hat d),  \nabla^k A\hat d\big\rangle|
\lesssim \big[|\nabla^k u||\nabla d|+\sum_{l=1}^{k-1}(|\nabla^l u|^2+|\nabla^{l+1}d|^2)\big]|\nabla^{k+1}u|,
\eeq
\beq
&&|\big\langle \nabla^k((\hat d^T A \hat d)\hat d)-(\hat d^T \nabla^k A\hat d)\hat d, \nabla^k A\hat d\big\rangle|\nonumber\\
&&\lesssim \big[|\nabla^k u||\nabla(d\#d\#d)|+\sum_{l=1}^{k-1}(|\nabla^l u|^2+|\nabla^{l+1}(d\#d\#d)|^2)\big]|\nabla^{k+1}u|.\nonumber
\eeq
Putting all these estimates together, we would have
\beq
\label{K4-estimate}
&&|K_4|\lesssim  \int_{B_2}\big[(|\nabla^k u|+|\nabla^{k+1}d|)|\nabla d|+|\nabla^k d||\nabla d|^2+|\nabla^k u||\nabla(d\#d\#d)|\big]|\nabla^{k+1}u|\eta^2
\nonumber\\
&&+\int_{B_2}\big[\sum_{l=1}^{k-1}(|\nabla^l u|^2+|\nabla^{l+1}d|^2+|\nabla^{l+1}(d\#d)|^2+|\nabla^{l+1}(d\#d\#d)|^2)\big]|\nabla^{k+1}u|\eta^2\nonumber\\
&&\leq \big[\delta+C\int_{B_2}(|\nabla^k u|^2+|\nabla^{k+1}d|^2)\eta^2\big]\int_{B_2}(|\nabla^{k+1} u|^2+|\nabla^{k+2}d|^2)\eta^2\nonumber\\
&&+C\sum_{l=0}^{k-1} \int_{\rm{spt}\eta}(|\nabla^l u|^4+|\nabla^{l+1}d|^4+|\nabla^{l+1}(d\#d)|^4+|\nabla^{l+1}(d\#d\#d)|^4)\nonumber\\
&&+C\int_{\rm{spt}\eta}(|\nabla^k u|^2+|\nabla^{k+1}d|^2)\int_{B_2}(|\nabla^{k} u|^2+|\nabla^{k+1}d|^2)\eta^2 +C\int_{\rm{spt}\eta}|\nabla d|^8.
\eeq
To estimate $K_3$, we first estimate both terms inside the integrand.
\beq
|\big\langle\nabla^k(|\nabla d|^2 d),\Delta \nabla^kd\big\rangle|\nonumber
\lesssim \big[|\nabla^k d||\nabla d|^2+|\nabla^{k+1} d||\nabla d|+\sum_{l=2}^k(|\nabla^l d|^2+|\nabla^l(d\#d)|^2)\big]|\nabla^{k+2}d|.
\eeq
Since $|d|=1$, it follows $\langle d,\Delta d\rangle=-|\nabla d|^2$ and
$$\langle d,\Delta \nabla^k d\rangle
=-\nabla^k(|\nabla d|^2)-\sum_{j=0}^{k-1}\langle\Delta\nabla^j d,\nabla^{k-j}d\rangle.$$
Therefore we have
\beq
\nonumber
&&\Big|\big\langle\nabla^k((\hat d^T A \hat d)d),\Delta \nabla^kd\big\rangle\Big|\nonumber\\
&&=\Big|(\hat d^T\nabla^k A\hat d)\langle d,\Delta \nabla^k d\rangle
+\big\langle(\sum_{j=0}^{k-1}\nabla^j A\nabla^{k-j}(d\#d))d, \Delta\nabla^k d\big\rangle
+\big\langle(\sum_{j=0}^{k-1}\nabla^j (\hat d^T A \hat d)\nabla^{k-j})d, \Delta\nabla^k d\big\rangle\Big|\nonumber\\
&&=\Big|-(\hat d^T\nabla^k A\hat d)\big[\nabla^k(|\nabla d|^2)+\sum_{j=0}^{k-1}\langle\Delta\nabla^j d,\nabla^{k-j}d\rangle\big]
+\big\langle\sum_{j=0}^{k-1}\nabla^j A\nabla^{k-j}(d\#d))d, \Delta\nabla^k d\big\rangle\nonumber\\
&&\quad+\big\langle\big(\sum_{j=0}^{k-1}\nabla^j (\hat d^T A \hat d)\nabla^{k-j}d, \Delta\nabla^k d\big\rangle\Big|\nonumber\\
&&\lesssim (|\nabla^{k+1}d||\nabla d|+|\nabla^k d||\nabla d|^2)|\nabla^{k+1}u|
+|\nabla^k u|(|\nabla(d\#d)|+|\nabla(d\#d\#d)|)|\nabla^{k+2}d|\nonumber\\
&&\quad+\big(\sum_{l=2}^{k}|\nabla^l d|^2\big)|\nabla^{k+1}u|+\big[\sum_{l=1}^{k-1}(|\nabla^l u|^2+|\nabla^{l+1}(d\#d)|^2+|\nabla^{l+1}(d\#d\#d)|^2)\big]
|\nabla^{k+2} d|.\nonumber
\eeq
Substituting these two estimates into $K_3$, we would obtain
\beq\label{K3-estimate}
&&|K_3|\nonumber\\
&&\lesssim \int_{B_2}\big[|\nabla^{k+1}d||\nabla d|+|\nabla^k d||\nabla d|^2+|\nabla^k u|(|\nabla(d\#d)|+|\nabla(d\#d\#d)|)\big]
(|\nabla^{k+1}u|+|\nabla^{k+2}d|)\eta^2\nonumber\\
&&+\int_{B_2}\big[\sum_{l=1}^{k-1}(|\nabla^l u|^2+|\nabla^ld|^2+|\nabla^{l+1}(d\#d)|^2+|\nabla^{l+1}(d\#d\#d)|^2)\big]
(|\nabla^{k+1}u|+|\nabla^{k+2} d|)\eta^2\nonumber\\
&&\le \big[\delta+\int_{B_2}(|\nabla^k u|^2+|\nabla^{k+1}d|^2)\eta^2\big]
\int_{B_2}(|\nabla^{k+1}u|^2+|\nabla^{k+2}d|^2)\eta^2\nonumber\\
&&+C\int_{\rm{spt}\eta}\big[|\nabla d|^4+|\nabla d|^8+\sum_{l=1}^{k-1}(|\nabla^l u|^4+|\nabla^{l+1}d|^4+|\nabla^{l+1}(d\#d)|^4
+|\nabla^{l+1}(d\#d\#d)|^4)\big]\nonumber\\
&&+C\int_{\rm{spt}\eta}(|\nabla^k u|^2+|\nabla^{k+1}d|^2)
\int_{B_2}(|\nabla^{k}u|^2+|\nabla^{k+1}d|^2)\eta^2.
\eeq
Finally, by substituting all these estimates on $I_i$'s, $J_i$'s, $K_i$'s, and $L_i$'s into (\ref{2nd_order_inequality}) we obtain
\beq\label{2nd_order_inequality1}
&&\frac{d}{dt} \int_{B_{2}}(|\nabla^k u|^2+|\nabla^{k+1} d|^2)\eta^2
+\int_{B_2}\left(\mu_4|\nabla^{k+1} u|^2+\frac{2}{|\lambda_1|}|\nabla^{k+2} d|^2\right)\eta^2\nonumber\\
&&\le\Big[\delta+C\int_{B_2}(|\nabla ^ku|^2+|\nabla^{k+1} d|^2)\eta^2\Big]\int_{B_2}(|\nabla^{k+1} u|^2+|\nabla^{k+2} d|^2)\eta^2\nonumber\\
&&\quad+ C\int_{\rm{spt}\eta}(|\nabla^k u|^2+|\nabla^{k+1} d|^2)\int_{B_2}(|\nabla^k u|^2+|\nabla^{k+1} d|^2)\eta^2\nonumber\\
&&\quad+C\int_{\rm{spt}\eta}\big[|u|^8+|\nabla d|^8+\sum_{l=0}^{k-1}(|\nabla^l u|^4+|\nabla^{l+1} d|^4+|\nabla^{l+1} (d\#d)|^4
+|\nabla^{l+1}(d\#d\#d)|^4)\big]\nonumber\\
&&\quad+C\int_{\rm{spt}\eta}\big[\sum_{l=1}^k(|\nabla^l u|^2+|\nabla^{l+1} d|^2)+|\nabla^{l-1}P|^2\big].
\eeq
By Fubini's theorem, we can choose $t_*\in [-4, -\frac94]$ such that
$$\int_{B_2}(|\nabla^k u|^2+|\nabla^{k+1}d|^2)\eta^2(t_*)\le 8\int_{P_2}(|\nabla^ku|^2+|\nabla^{k+1} d|^2)\eta^2\le 8\epsilon_0.$$
For a large constant $C_1>8$ to be chosen later, define $T_*\in [t_*, 0]$  by
$$T_*=\sup_{t_*\le t\le 0} \Big\{\int_{B_2}(|\nabla^k u|^2+|\nabla^{k+1} d|^2)\eta^2(s)<C_1\epsilon_0, \ \forall \ t_*\le s\le t\ \Big\}.$$
\noindent{\it Claim 1}.  If $\epsilon_0>0$ is sufficiently small, then $T_*=0$.\\
First, by continuity we know that $T_*>t_*$. Suppose that $T_*<0$. Then we have
\beq\label{max_time}
\int_{B_2}(|\nabla^k u|^2+|\nabla^{k+1} d|^2)\eta^2(s)<C_1\epsilon_0\ \ \forall  \ t_*\le s<T_*;
\  \int_{B_2}(|\nabla^k u|^2+|\nabla^{k+1} d|^2)\eta^2(T_*)=C_1\epsilon_0.
\eeq
By choosing sufficiently small $\epsilon_0>0$ and $\delta>0$, we may assume
$$\Big[\delta+C\int_{B_2}(|\nabla^k u|^2+|\nabla^{k+1}d|^2)\eta^2(t)\Big]\le\delta+CC_1\epsilon_0\le \frac12\min\left\{\mu_4, \frac{2}{|\lambda_1|}\right\}, \ \forall \ t_*\le t\le T_*.$$
Set
$$\phi(t):=\int_{t_*}^t\int_{\rm{spt}\eta}(|\nabla^k u|^2+|\nabla^{k+1} d|^2)(s)\,ds, \ \forall t_*\le t\le T_*.$$
Then by integrating (\ref{2nd_order_inequality1}) over $t\in [t_*, T_*]$ and applying Gronwall's inequality, we have
\beq
\label{2nd_order_inequality2}
&&\int_{B_2}(|\nabla^k u|^2+|\nabla^{k+1} d|^2)\eta^2(T_*)
+\frac12\int_{t*}^{T_*}\int_{B_2}\left(\mu_4|\nabla^{k+1} u|^2+\frac{2}{|\lambda_1|}|\nabla^{k+2} d|^2\right)\eta^2\nonumber\\
&\leq& e^{C\phi(T_*)}\Big[\int_{B_{2}}(|\nabla^k u|^2+|\nabla^{k+1} d|^2)\eta^2(t_*)+C
\sum_{l=1}^k\int_{P_{1+2^{-(l+1)}}}\left(|\nabla^l u|^2+|\nabla^{l+1}d|^2+|\nabla^{l-1} P|^2\right)\Big]\nonumber\\
&+&C\int_{P_\frac32}\big[|u|^8+|\nabla d|^8\big]
+C\sum_{l=0}^{k-1}\int_{P_{1+2^{-(l+1)}}}\big[|\nabla^l u|^4+|\nabla^{l+1} d|^4+|\nabla^{l+1} (d\#d)|^4
+|\nabla^{l+1}(d\#d\#d)|^4)\big]\nonumber\\
&\leq& e^{C\epsilon_0}(8\epsilon_0+C\epsilon_0),
\eeq
where we have used both (\ref{induction-hypothesis}), (\ref{l4-estimate-ud}), and (3.69) in the last step.

It is easy to see that we can choose
$\displaystyle C_1>(C+8)e^{C\epsilon_0}$
so that
$\displaystyle e^{C\epsilon_0}(8\epsilon_0+C\epsilon_0)<C_1\epsilon_0.$
Hence  (\ref{2nd_order_inequality2}) implies
$$\int_{B_2}(|\nabla^k u|^2+|\nabla^{k+1} d|^2)\eta^2(T_*)<C_1\epsilon_0,$$
which contradicts the definition of $T_*$. Thus the claim holds true.

Since $\nabla^k P$ satisfies
\beq\label{P-equation}
\Delta \nabla^kP=-(\nabla\cdot)^2\left[\nabla^k(u\otimes u+\nabla d \odot\nabla d-\sigma^L(u,d))\right],
\eeq
the elliptic theory and (\ref{2nd_order_inequality2}) (with $T_*=0$) then yield
\beq\label{P-estimate}
\int_{P_{1+2^{-(k+2)}}}|\nabla^k P|^2&\lesssim&\sup_{-(1+2^{-(k+1)})^2\le t\le 0}\int_{B_{1+2^{-(k+1)}}}(|\nabla^k u|^2+|\nabla^{k+1} d|^2)
\nonumber\\
&&+\int_{P_{1+2^{-(k+1)}}}\left(|P|^2+|\nabla^{k+1} u|^2+|\nabla^{k+2} d|^2\right)
\leq C\epsilon_0.
\eeq
This yields that the conclusion holds for $l=k$. Thus the proof is complete.
\qed

\bigskip
\noindent{\bf Completion of Proof of Theorem \ref{regularity}}:
It is readily seen that by the Sobolev embedding theorem, lemma \ref{higher-order-estimate} implies that
$(\nabla^k u,\nabla^{k+1} d)\in L^\infty(P_{\frac{3r_0}4}(z_0))$ for any $k\ge 1$. This, combined with the theory of linear Stokes' equation and heat equation, would
imply the smoothness of $(u,d)$ in $P_{\frac{r_0}2}(z_0)$.
\qed

\setcounter{section}{3} \setcounter{equation}{3}
\section{Existence of global weak solutions of Ericksen-Leslie's system  (\ref{ES2})}

In this section, by utilizing both the local energy inequality (\ref{local_energy_ineq1}) for suitable weak solutions of (\ref{ES2})
and the regularity Theorem \ref{regularity} for suitable weak solutions to (\ref{ES2}),
we will establish the existence of global weak solutions to (\ref{ES2}) and (\ref{IV})
that enjoy the regularity described as in Theorem \ref{existence}. The argument is
similar to \cite{LLW} Section 5.

First, we recall the following version of Ladyzhenskaya's inequality (see \cite{struwe} lemma 3.1 for the proof).
\begin{lemma}\label{le:4.2} There exists $C_0>0$
such that for any $T>0$, if  $u\in L^\infty_t L^{2}_x\cap L^2_tH^1_x(\R^2\times [0, T])$,
then for $0<R\le+\infty$, it holds
\beq \label{lady1} \int_{\R^2\times [0,T]}| u|^4 \leq C_0
\big(\sup\limits_{(x,t)\in \R^2\times [0,T]}\int_{B_R(x)}|u|^2(\cdot, t)\big)
\big[\int_{\R^2\times [0,T]}|\nabla u|^2+\frac{1}{R^2}\int_{\R^2\times[0,T]}| u|^2\big].
\eeq
\end{lemma}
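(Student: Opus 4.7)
The plan is to prove this by combining the standard local Ladyzhenskaya inequality on balls of radius $R$ in $\R^2$ with a finite-overlap covering argument, then integrating in time. Note this is a purely spatial/functional inequality, so the structure of the Ericksen--Leslie system plays no role here.

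First I would recall the local form of Ladyzhenskaya's inequality in $\R^2$ already used earlier in the paper (cf.\ (\ref{lady})): for any $v\in H^1(B_{2r})$,
\beqno
\int_{B_r}|v|^4 \le C\, \Big(\int_{B_{2r}}|v|^2\Big)\Big(\int_{B_{2r}}|\nabla v|^2 + r^{-2}\int_{B_{2r}}|v|^2\Big).
\eeqno
A standard proof of this uses a cut-off $\varphi\in C_0^\infty(B_{2r})$ with $\varphi\equiv 1$ on $B_r$ and $|\nabla\varphi|\lesssim r^{-1}$, together with the global Ladyzhenskaya inequality $\|w\|_{L^4(\R^2)}^2 \le C\|w\|_{L^2(\R^2)}\|\nabla w\|_{L^2(\R^2)}$ applied to $w=v\varphi$.

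Second, I would choose a covering $\{B_R(x_i)\}_{i\in\N}$ of $\R^2$ by balls of radius $R$ (for $R<+\infty$) with uniformly bounded overlap, i.e.\ there is a dimensional constant $N_0$ such that $\sum_i \chi_{B_{2R}(x_i)} \le N_0$ on $\R^2$. At each fixed time $t\in[0,T]$, summing the local inequality over the cover yields
\beqno
\int_{\R^2}|u|^4(\cdot,t)
&\le& \sum_i\int_{B_R(x_i)}|u|^4(\cdot,t) \\
&\le& C\Big(\sup_{x\in\R^2}\int_{B_R(x)}|u|^2(\cdot,t)\Big)\sum_i\int_{B_{2R}(x_i)}\big(|\nabla u|^2+R^{-2}|u|^2\big)(\cdot,t)\\
&\le& CN_0\Big(\sup_{x\in\R^2}\int_{B_R(x)}|u|^2(\cdot,t)\Big)\int_{\R^2}\big(|\nabla u|^2+R^{-2}|u|^2\big)(\cdot,t).
\eeqno
Integrating in $t\in[0,T]$ and replacing the time-dependent supremum by the space-time supremum on the right-hand side gives exactly (\ref{lady1}) for $R<+\infty$.

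Third, the case $R=+\infty$ is handled either by sending $R\to+\infty$ in the finite-$R$ inequality (the $R^{-2}$ term disappears since $\int|u|^2$ is controlled), or directly by integrating the global 2D inequality $\|u(\cdot,t)\|_{L^4(\R^2)}^4\le C\|u(\cdot,t)\|_{L^2(\R^2)}^2\|\nabla u(\cdot,t)\|_{L^2(\R^2)}^2$ in time. I do not anticipate a genuine obstacle in this lemma: the only mildly technical point is fixing a covering with controlled overlap so that the right-hand side does not blow up with the number of balls, but this is entirely standard in $\R^2$.
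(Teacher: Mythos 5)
Your covering argument is exactly the standard proof of this inequality, and in fact the paper does not give its own proof but simply cites Struwe's Lemma~3.1 in \cite{struwe}, whose argument is precisely the one you reconstruct: local Ladyzhenskaya (the inequality already recorded as (\ref{lady})) plus a finite-overlap cover of $\R^2$, then integrate in time. The one small imprecision is that the local inequality produces a first factor $\int_{B_{2R}(x_i)}|u|^2$ rather than $\sup_x\int_{B_R(x)}|u|^2$, but since $B_{2R}(x_i)$ is covered by a bounded (dimensional) number of $R$-balls this is absorbed into $C_0$, so the proof is correct as written modulo this cosmetic adjustment.
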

\bigskip

Similar to \cite{LLW} lemma 5.2, we  can estimate the life span of smooth solutions  to (\ref{ES2})
in term of Sobolev profiles of smooth initial data.

\begin{lemma} \label{life_est} Assume (\ref{necessary}), (\ref{parodi}), and (\ref{Leslie_condition}) hold.
There exist $\epsilon_1>0$  and $\theta_1>0$ depending on $(u_0,d_0)$
such that if $(u_0,d_0)\in C^\infty(\R^2, \mathbb R^2\times \mathbb S^2)\bigcap_{k\ge 0} \big(H^k(\R^2,\R^2)\times H^{k+1}_{e_0}(\R^2,\mathbb S^2)\big)$
satisfies
\beq \label{small_data}
\sup_{x\in\R^2}\int_{B_{2R_0}(x)}(|u_0|^2+|\nabla d_0|^2)
\le\epsilon_1^2
\eeq
for some $R_0>0$.
Then there exist $T_0\ge\theta_1 R_0^2$ and a unique solution
$(u,d)\in C^\infty(\R^2\times [0,T_0],\mathbb R^2\times \mathbb S^2)$ of (\ref{ES2}) and (\ref{IV}) in $\R^2$, satisfying
\beq
\label{small_data10}
\sup_{(x,t)\in\R^2\times [0, T_0]}\int_{B_{R_0}(x)}(|u|^2+|\nabla d|^2)(t)\le 2\epsilon_1^2.
\eeq
\end{lemma}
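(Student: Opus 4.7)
The plan is to combine a local-in-time smooth existence result with the local energy inequality \eqref{local_energy_ineq1} and the version of Ladyzhenskaya's inequality given in Lemma \ref{le:4.2}, and then to run a continuity argument that prevents local concentration of the energy density $|u|^2+|\nabla d|^2$ on a time interval comparable to $R_0^2$. First I would invoke either Huang--Wang \cite{HW} or an iteration of the higher-order energy estimate of Lemma \ref{higher-order-estimate} to produce a maximal unique smooth solution $(u,d)\in C^\infty(\R^2\times [0,T_{\max}),\R^2\times\mathbb S^2)$ of \eqref{ES2}--\eqref{IV}. Define $E(t):=\sup_{x\in\R^2}\int_{B_{R_0}(x)}(|u|^2+|\nabla d|^2)(y,t)\,dy$ and let $T_0\in(0,T_{\max}]$ be the largest time such that $E(s)\le 2\epsilon_1^2$ for $0\le s\le T_0$. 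The goal is then to show that $T_0\ge \theta_1 R_0^2$ for a constant $\theta_1>0$ that only depends on the structural constants, provided $\epsilon_1$ is chosen small.

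For each $x_0\in\R^2$ I would fix a cutoff $\eta\in C_0^\infty(B_{2R_0}(x_0))$ with $\eta\equiv 1$ on $B_{R_0}(x_0)$ and $|\nabla\eta|\le C/R_0$, and apply \eqref{local_energy_ineq1} on $[0,t]$ for $t\in[0,T_0]$. The cubic terms in $(u,\nabla d)$ appearing on the right-hand side (namely $|u|^3$, $|u|^2|\nabla d|$, $|u||\nabla d|^2$ and $|\nabla d|^3$) are estimated via Lemma \ref{le:4.2} with $R=R_0$, yielding $\|u\|_{L^4(\R^2\times[0,t])}^2+\|\nabla d\|_{L^4(\R^2\times[0,t])}^2\lesssim \sqrt{E(t)}\bigl(\mathcal D(t)+R_0^{-2}\mathcal L(t)\bigr)$, where $\mathcal D(t)$ is the local dissipation $\int_0^t\int(|\nabla u|^2+|\nabla^2 d|^2)$ and $\mathcal L(t)$ is the lower-order energy integral. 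Since $\sqrt{E(t)}\le\sqrt{2}\epsilon_1$, these contributions can be absorbed into the coercive dissipation on the left with a residue of order $C t R_0^{-2}E(t)$. The bilinear terms $|u||\nabla u|$, $|\nabla d||\nabla^2 d|$ and $|\nabla u||\nabla d|$ are handled by Cauchy--Schwarz with weights tuned so that a small multiple of $\mathcal D(t)$ is absorbed and the remainder is again of order $tR_0^{-2}E(t)$.

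The pressure contribution $\int_0^t\int|P||u||\nabla\eta^2|$ is the most delicate step and I would handle it by mimicking the splitting used in Lemma \ref{le:epsilon}. Taking the divergence of \eqref{ES2}$_1$ one has $\Delta P=-(\nabla\cdot)^2\bigl(u\otimes u+\nabla d\odot\nabla d-\sigma^L(u,d)\bigr)$, so one can write $P=Q+H$ on $B_{2R_0}(x_0)$, where $Q$ is the Newtonian potential of the nonlinearity cut off against $\phi\in C_0^\infty(B_{3R_0}(x_0))$ with $\phi\equiv 1$ on $B_{2R_0}(x_0)$, and $H=P-Q$ is harmonic on $B_{2R_0}(x_0)$. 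Calder\'on--Zygmund $L^2$-theory bounds $\|Q\|_{L^2}$ by $\|u\|_{L^4}^2+\|\nabla d\|_{L^4}^2+\|\nabla u\|_{L^2}+\|\nabla^2 d\|_{L^2}$, which by Lemma \ref{le:4.2} again carries a factor $\sqrt{E(t)}$; the harmonic part $H$ is controlled on $B_{R_0}(x_0)$ via the mean-value property from its $L^2$-mean on $B_{2R_0}(x_0)$, and the latter by the global $L^2$-bound on $P$ and a finite-overlap covering gives an aggregate contribution of order $tR_0^{-2}$.

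Combining these estimates and taking the supremum over $x_0\in\R^2$ (using a uniform finite-overlap covering of $\R^2$ by balls $B_{2R_0}(x_0)$) yields an inequality of the shape $E(t)\le\epsilon_1^2+C\bigl(\epsilon_1+\sqrt{t/R_0^2}\,\bigr)E(t)$ on $[0,T_0]$. Choosing $\epsilon_1,\theta_1>0$ so that $C\epsilon_1+C\sqrt{\theta_1}\le\tfrac14$, one obtains $E(t)\le\tfrac43\epsilon_1^2<2\epsilon_1^2$ on $[0,\min\{T_0,\theta_1 R_0^2\}]$, which by continuity forces $T_0\ge\theta_1R_0^2$ and gives \eqref{small_data10}. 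Uniqueness and smoothness on $[0,T_0]$ follow from the maximal smooth existence result together with Lemma \ref{higher-order-estimate}. The main obstacle is precisely the localization of the pressure: because $P$ is defined non-locally through the Stokes operator, transferring the smallness of $E$ into a usable local $L^2$-estimate of $P$ requires the Calder\'on--Zygmund splitting above and a uniform finite-overlap covering so that the constants in the bootstrap remain independent of $x_0$.
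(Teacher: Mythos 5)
Your plan — local existence plus the local energy inequality plus Ladyzhenskaya's Lemma \ref{le:4.2} plus a continuity argument — is exactly the paper's strategy. But two points are genuinely off.

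First, your claimed absorbing inequality $E(t)\le\epsilon_1^2+C\big(\epsilon_1+\sqrt{t/R_0^2}\big)E(t)$ is not what the estimates actually produce, and consequently your assertion that $\theta_1$ ``only depends on the structural constants'' is false. Look at a representative bilinear error term, say $\tfrac1{R_0}\int_0^t\int_{B_{2R_0}(x_0)}|u||\nabla u|$. By Cauchy--Schwarz this is bounded by $\tfrac1{R_0}\|u\|_{L^2([0,t]\times B_{2R_0})}\|\nabla u\|_{L^2([0,t]\times B_{2R_0})}$. The first factor is $\lesssim\sqrt{t\,E(t)}$ (local in space, global in time), but the second factor has no small local bound: the dissipation over $[0,t]$ is controlled only by the \emph{global} energy inequality, so $\|\nabla u\|_{L^2(\R^2\times[0,t])}\lesssim\sqrt{E_0}$. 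The resulting term is $\sqrt{t/R_0^2}\,\sqrt{E(t)E_0}$, which is of degree $1/2$ in $E(t)$, not degree $1$, and carries the full initial energy $E_0$. You cannot absorb a sublinear term into $E(t)$; what you can do — and what the paper does in (4.19)--(4.20) — is use $\sqrt{E(t)}\le\sqrt{2}\,\epsilon_1$ on $[0,T_0]$ to land on $E(t)\le\epsilon_1^2+C\sqrt{t/R_0^2}\,\epsilon_1\sqrt{E_0}$, evaluate at the exit time where $E(T_0)=2\epsilon_1^2$, and deduce $T_0\ge\epsilon_1^2R_0^2/(C^2E_0)$. That is why $\theta_1$ depends on $(u_0,d_0)$ through $E_0$, as the lemma statement already anticipates. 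The same defect appears in your treatment of the cubic and pressure terms: after your Calder\'on--Zygmund splitting you yourself fall back on ``the global $L^2$-bound on $P$,'' which again brings in $E_0$ rather than $E(t)$. Your local pressure decomposition is a legitimate alternative, but the paper's route is simpler here — it just invokes a \emph{global} CZ bound $\|P\|_{L^2(\R^2\times[0,t])}\lesssim\|u\|_{L^4}^2+\|\nabla d\|_{L^4}^2+\|\nabla u\|_{L^2}+\|\nabla^2 d\|_{L^2}$ and then applies H\"older directly to $\tfrac1{R_0}\int|P||u|$, with no local splitting needed.

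Second, the local-existence input is miscited. Huang--Wang \cite{HW} is a blow-up criterion for the \emph{simplified} system \eqref{ES4}, not local well-posedness of the general Ericksen--Leslie system \eqref{ES2}; and Lemma \ref{higher-order-estimate} is an \emph{a priori} estimate for solutions assumed to exist, so ``iterating'' it cannot produce a solution. The correct reference is Wang--Zhang--Zhang \cite{WZZ}, which gives local existence and uniqueness of smooth solutions for \eqref{ES2} with smooth initial data — this is what the paper uses.
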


\noindent{\it Proof.}  By the theorem of Wang-Zhang-Zhang \cite{WZZ} on the local existence of smooth solutions,   there exist  $T_0>0$ and
a unique smooth solution $(u,d)\in C^\infty(\R^2\times [0,T_0],\mathbb R^2\times \mathbb S^2)$ to (\ref{ES2}) and (\ref{IV}).
Let $0< t_0\le T_0$ be the maximal time such that
\beq\label{small_data1}
\sup_{x\in\R^2}\int_{B_{R_0}(x)}(|u|^2+|\nabla d|^2)(t)\le 2\epsilon_1^2,  \ 0\le t\le t_0.
\eeq
Hence we have
\beq\label{small_data2}
\sup_{x\in\R^2}\int_{B_{R_0}(x)}(|u|^2+|\nabla d|^2)(t_0)=2\epsilon_1^2.
\eeq
Without loss of generality, we assume $t_0\le R_0^2$.
Set
$$ E(t)=\int_{\R^2}(| u|^2+|\nabla d|^2)(t), \ \
E_0=\int_{\R^2}(| u_0|^2+|\nabla d_0|^2).$$
Then by lemma \ref{global_energy_ineq} we have that for any $0<t\le t_0$,
\beq\label{global_energy9} E(t)+\int_{\R^2\times [0, t]}\left(\mu_4|\nabla u|^2+\frac{2}{|\lambda_1|}\left|\triangle d+|\nabla d|^2 d\right|^2\right)
\leq E_0.
\eeq
By lemma \ref{le:4.2}, we have that for all $0\le t\le t_0$,
\beq
\int_{\R^2\times [0, t]}|\nabla d|^4&\leq& C_0
\left(\sup\limits_{(x,s)\in \R^2\times [0, t]}\int_{B_{R_0}(x)}
|\nabla d|^2(s)\right)
\left[\int_{\R^2\times [0,t]}|\triangle d|^2 +\frac{1}{R_0^2}\int_{\R^2\times [0, t]}|\nabla d|^2\right]\nonumber\\
&\leq& C_0\mathcal{E}_{R_0}^2(t)\Big[\int_{\R^2\times [0,t]}|\triangle
d|^2+\frac{tE_0}{R_0^2}\Big]
\label{lady20}
\eeq where
$$\mathcal{E}_{R_0}^1(t)=\sup\limits_{(x,s)\in \R^2\times [0,t]}\int_{B_{R_0}(x)}| u|^2(s),
\ \mathcal{E}_{R_0}^2(t)=\sup\limits_{(x,s)\in \R^2\times [0, t]}\int_{B_{R_0}(x)}|\nabla d|^2(s),
 \ \mathcal{E}_{R_0}(t)=\sum_{i=1}^2\mathcal{E}_{R_0}^i(t).$$
By (\ref{small_data1}), we have $\displaystyle\mathcal{E}_{R_0}(t)\le 2\epsilon_1^2, \ \forall \ 0\le t\le t_0$
so that
\beq \label{lady3}\int_{\R^2\times [0, t_0]}|\nabla d|^4\leq
C_0\epsilon_1^2\Big[\int_{\R^2\times [0, t_0]}|\triangle{d}|^2+\frac{t_0 E_0}{R_0^2}\Big].
\eeq
Hence we obtain
\beqno
\int_{\R^2\times [0, t_0]}|\triangle{d}|^2&=& \int_{\R^2\times [0, t_0]}\left(|\triangle d+|\nabla d|^2 d|^2+|\nabla
d|^4\right)\leq \frac{|\lambda_1|}2 E_0+\int_{\R^2\times [0, t_0]}|\nabla d|^4\\
&\leq& C_0\epsilon_1^2\int_{\R^2\times [0, t_0]}|\triangle{d}|^2+\left(\frac{C_0t_0\epsilon_1^2}{R_0^2}+\frac{|\lambda_1|}2\right) E_0.
\eeqno
Choosing $\displaystyle 0<\epsilon_1^2\le \frac{1}{2C_0}$, we would have
\beq\label{22-est-d}
\int_{\R^2\times [0, t_0]}|\nabla^2 d|^2=\int_{\R^2\times [0, t_0]}|\triangle d|^2\leq \left(|\lambda_1|+\frac{C_0\epsilon_1^2t_0}{R_0^2}\right)E_0
\le C_1E_0.
\eeq
This, combined with (\ref{lady3}), also yields
\beq\label{l4-est-Dd}
\int_{\R^2\times [0, t_0]}|\nabla d|^4\le C_1\epsilon_1^2E_0.
\eeq
We can also estimate
\beq \int_{\R^2\times [0, t_0]}| u|^4 &\leq& C_0\mathcal{E}_{R_0}^1(t_0)
\Big[\int_{\R^2\times [0, t_0]}|\nabla u|^2+\frac{1}{R_0^2}\int_{\R^2\times [0, t_0]}| u|^2\Big]\nonumber\\
&\leq& C_0\mathcal{E}_{R_0}^1(t_0)\left(\frac{E_0}{\mu_4}+\frac{t_0E_0}{R_0^2}\right)\leq C_1\epsilon_1^2 E_0.
\label{l4-est-u}
\eeq

Now we need to estimate $\mathcal{E}_{R_0}(t)$. Before we do it, we need to recall the following global $L^2$-estimate of $P$:
\beq\label{global_P_estimate}
\int_0^{t_0}\int_{\R^2} |P|^2\lesssim \int_0^{t_0}\int_{\R^2}(|u|^4+|\nabla d|^4+|\sigma^L(u,d)|^2)
\lesssim \int_0^{t_0}\int_{\R^2}(|u|^4+|\nabla d|^4+|\nabla u|^2+|\nabla^2 d|^2).
\eeq
For any $x\in\R^2$,
let $\eta\in C^\infty_0(B_{2R_0}(x))$ be a cut-off function of $B_{R_0}(x)$:
$$0\le\eta\le 1, \ \eta\equiv 1 \ {\rm{on}}\  B_{R_0}(x), \ \eta\equiv 0
\ {\rm{outside}}\  B_{2R_0}(x), \ |\nabla\eta|\leq 4R_0^{-1}.$$
Then, by applying lemma \ref{local_energy_ineq} with this $\eta$ and the estimates (\ref{global_energy9}), (\ref{l4-est-Dd}), (\ref{22-est-d}),  (\ref{l4-est-u}),
and (\ref{global_P_estimate}),
we have
\beq&&\sup\limits_{0\leq t\leq t_0}\int_{B_{R_0}(x)}(| u|^2+|\nabla d|^2)+c_0\int_0^{t_0}\int_{B_{R_0}(x)}(|\nabla u|^2+|\Delta d+|\nabla d|^2 d|^2)
-\mathcal{E}_{2R_0}(0)\nonumber\\
&\lesssim &\int_{\R^2\times [0, t_0]}
\Big[| u|^3+|u||\nabla u|+|u|^2|\nabla d|+|u||\nabla^2 d|+|u||P|+|\nabla d||\nabla^2 d|+|u||\nabla d|^2+|\nabla d||\nabla u|\Big]
|\nabla\eta^2|\nonumber\\
&\lesssim& \frac{1}{R_0}\Big\||u|+|\nabla d|\Big\|_{L^2(\R^2\times [0,t_0])}
\Big[\left\||\nabla u|+|\nabla^2 d|+|P|\right\|_{L^2(\R^2\times [0,t_0])}+\left\||u|+|\nabla d|\right\|_{L^4(\R^2\times [0,t_0])}^2
\Big]\nonumber\\
&\leq& C\left(\frac{t_0}{R_0^2}\right)^\frac12 E_0^\frac12.
\label{sup-est}\eeq
 Thus, by taking supremum over $x\in\R^2$ we obtain
\beq\label{sup-est1}
2\epsilon_1^2&=&\sup\limits_{x\in\R^2, \ 0\leq t\leq t_0}\int_{B_{R_0}(x)}(| u|^2+|\nabla d|^2)(t)\nonumber\\
&\leq & \mathcal{E}_{2R_0}(0)+ C\left(\frac{t_0}{R_0^2}\right)^\frac12 E_0^\frac12
\leq  \epsilon_1^2+ C_0\left(\frac{t_0}{R_0^2}\right)^{\frac12}E_0^\frac12.
\eeq
This implies
$$t_0\ge \frac{\epsilon_1^2}{C_0^2 E_0} R_0^2=\theta_1 R_0^2,
\ \  {\rm{with}}\ \ \theta_1\equiv \frac{\epsilon_1^2}{C_0^2 E_0}.$$
Set $T_0=t_0$. Then we have that $T_0\ge \theta_1R_0^2$ and (\ref{small_data10}) holds. This
completes the proof.
\qed

\bigskip
Before proving Theorem \ref{existence}, we need the following density property of Sobolev maps (see \cite{schoen-uhlenbeck}
for the proof).

\begin{lemma}\label{density} For $n=2$ and any given map $f\in H^1_{e_0}(\R^2,\mathbb S^2)$,
there exist $\{f_k\}\subset C^{\infty}(\R^2,\mathbb S^2)\bigcap_{l\ge 1} H^l_{e_0}(\R^2,\mathbb S^2)$ such that
$$\lim_{k\rightarrow\infty}\|f_k-f\|_{H^1(\R^2)}=0.$$
\end{lemma}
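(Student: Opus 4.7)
My plan is to follow the standard Schoen--Uhlenbeck smoothing argument in the critical regime $n=p=2$, exploiting that the target $\mathbb S^2$ is simply connected. Writing $f=e_0+v$ with $v\in H^1(\R^2,\R^3)$, I first mollify: for the standard nonnegative mollifier $\rho\in C_c^\infty(\R^2)$ with $\int\rho=1$ and $\rho_\epsilon(x):=\epsilon^{-2}\rho(x/\epsilon)$, set $g_\epsilon:=\rho_\epsilon*f=e_0+\rho_\epsilon*v$. Then $g_\epsilon\in C^\infty(\R^2,\R^3)$, $g_\epsilon-e_0\in H^l(\R^2,\R^3)$ for every $l\ge 0$, and $g_\epsilon\to f$ in $H^1(\R^2,\R^3)$ as $\epsilon\downarrow 0$. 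The map $g_\epsilon$ is not $\mathbb S^2$-valued, so I intend to correct it by radial projection where $|g_\epsilon|$ stays away from zero, and by a topological extension on a small ``bad set.''

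The quantitative input comes from $|f|\equiv 1$, which gives the exact identity
$$1-|g_\epsilon(x)|^2=\int_{\R^2}\rho_\epsilon(x-y)\,|f(y)-g_\epsilon(x)|^2\,dy.$$
Combining the $L^\infty$-bound on $\rho_\epsilon$ with Poincar\'e's inequality on $B_\epsilon(x)$ (the critical scaling in $n=2$) yields a universal constant $C_0>0$ such that $1-|g_\epsilon(x)|^2\le C_0\int_{B_\epsilon(x)}|\nabla f|^2$ for every $x\in\R^2$. Define the bad set $\Sigma_\epsilon:=\{x\in\R^2:\int_{B_\epsilon(x)}|\nabla f|^2\ge 1/(4C_0)\}$. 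By Fubini, $|\Sigma_\epsilon|\le 4\pi C_0\,\epsilon^2\int|\nabla f|^2\to 0$; on its complement $|g_\epsilon|\ge 1/2$, so $h_\epsilon:=g_\epsilon/|g_\epsilon|$ is smooth and $\mathbb S^2$-valued there, and the Lipschitz character of $y\mapsto y/|y|$ on $\{|y|\ge 1/2\}$ combined with $g_\epsilon\to f$ in $H^1$ gives $h_\epsilon\to f$ in $H^1(\R^2\setminus\Sigma_\epsilon)$. Simultaneously, $\int_{\Sigma_\epsilon}(|\nabla f|^2+|\nabla g_\epsilon|^2)\to 0$ by the absolute continuity of the Lebesgue integral together with $|\Sigma_\epsilon|\to 0$.

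The main obstacle is the final step: modifying $h_\epsilon$ across $\Sigma_\epsilon$ into a globally smooth $\mathbb S^2$-valued map with controlled additional energy. I would cover $\Sigma_\epsilon$ by finitely many pairwise disjoint discs $\{B_j\}$ of radii of order $\epsilon$; since $\pi_1(\mathbb S^2)=0$, each boundary trace $h_\epsilon|_{\partial B_j}$ extends to a smooth map $B_j\to\mathbb S^2$ whose Dirichlet energy on $B_j$ is controlled by $\int_{\partial B_j}|\partial_\tau h_\epsilon|^2\,d\mathcal H^1$ plus a term vanishing with the diameter of $B_j$ (one can take, for instance, the harmonic extension into a geodesic disc of $\mathbb S^2$, which is unobstructed by simple connectedness). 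Summing over $j$ and using the good-set control from the previous step, the total extra $H^1$-energy tends to $0$, producing a smooth map $f_\epsilon\in C^\infty(\R^2,\mathbb S^2)$ with $f_\epsilon-e_0\in H^l(\R^2,\R^3)$ for every $l\ge 1$ and $f_\epsilon\to f$ in $H^1(\R^2,\R^3)$. Setting $f_k:=f_{\epsilon_k}$ for any sequence $\epsilon_k\downarrow 0$ yields the required sequence.
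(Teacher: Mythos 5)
The paper itself does not prove this lemma; it cites Schoen--Uhlenbeck. Your proposal reconstructs the standard mollify-project-patch argument, and the first two thirds are correct and well executed: the identity $1-|g_\epsilon(x)|^2=\int\rho_\epsilon(x-y)|f(y)-g_\epsilon(x)|^2\,dy$, the Poincar\'e bound $1-|g_\epsilon(x)|^2\le C_0\int_{B_\epsilon(x)}|\nabla f|^2$, the Fubini measure estimate for $\Sigma_\epsilon$, and the $H^1$-convergence of the projected map $h_\epsilon=g_\epsilon/|g_\epsilon|$ on the good set all check out and correctly exploit the critical scaling $n=p=2$.

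The gap is in the final patching step across $\Sigma_\epsilon$, and it is not merely cosmetic. You invoke $\pi_1(\mathbb S^2)=0$ to extend the boundary trace $h_\epsilon|_{\partial B_j}$, and assert that the extension's Dirichlet energy on $B_j$ is controlled by $\int_{\partial B_j}|\partial_\tau h_\epsilon|^2\,d\mathcal H^1$ ``plus a term vanishing with the diameter.'' Simple connectedness gives \emph{existence} of a smooth extension but carries no energy bound. The bound you want holds only when the boundary trace has small oscillation, so that it lands in a single geodesic disc of $\mathbb S^2$ and you can harmonically extend there. But with $B_j$ of radius $\sim\epsilon$ centered at $x_j\in\Sigma_\epsilon$, the local energy $\int_{B_\epsilon(x_j)}|\nabla f|^2\ge 1/(4C_0)$ is bounded \emph{below}, not above --- the bad set is precisely where the energy at scale $\epsilon$ is \emph{not} small --- so there is no reason the tangential oscillation of $h_\epsilon$ on $\partial B_j$ is small, and an arbitrary extension can cost $O(1)$ energy per ball. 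With $\lesssim E_0/\epsilon_0$ balls the total extra energy need not tend to $0$, and the proof breaks. (There is a secondary, fixable, issue: the Vitali cover produces dilated balls that may overlap, so the ``pairwise disjoint'' replacement as stated is not available without merging into clusters.) The standard repair is to enlarge the covering radius to some $r_\epsilon$ with $\epsilon\ll r_\epsilon\to 0$: the finitely many (at most $CE_0$) balls $B_{r_\epsilon}(x_j)$ then have total measure $\lesssim E_0 r_\epsilon^2\to 0$, so by absolute continuity of $\int|\nabla f|^2$ the energy on the union of the annuli $B_{2r_\epsilon}(x_j)\setminus B_{r_\epsilon}(x_j)$ tends to $0$; a Courant--Lebesgue/Fubini slicing then produces, for each $j$, a circle of radius $\rho_j\in[r_\epsilon,2r_\epsilon]$ on which $\rho_j\int_{\partial B_{\rho_j}}|\partial_\tau h_\epsilon|^2$ is as small as you like, hence small oscillation, and only then does the geodesic-disc harmonic extension give the claimed energy control.
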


\bigskip
\noindent{\bf Proof of Theorem \ref{existence}}:

Since $u_0\in \mathbf H$, there exists $u_0^k\in C^\infty_0(\R^2,\mathbb R^2)$, with $\nabla\cdot u_0^k=0$,
such that
$$\lim_{k\rightarrow\infty}\|u_0^k-u_0\|_{L^2(\R^2)}=0.$$
Since $d_0\in H^1_{e_0}(\R^2,\mathbb S^2)$, lemma \ref{density}
implies that there exist
$\{d_0^k\}\subset C^{\infty}(\R^2,\mathbb S^2)\bigcap_{l\ge 1} H^l_{e_0}(\R^2,\mathbb S^2)$ such that
$$\lim_{k\rightarrow\infty}\|d_0^k-d_0\|_{H^1(\R^2)}=0.$$
By the absolute continuity of $\displaystyle\int (|u_0|^2+|\nabla d_0|^2)$,
there exists $R_0>0$ such that
\beq \label{small_data11}
\sup_{x\in\R^2}\int_{B_{2R_0}(x)}(|u_0|^2+|\nabla d_0|^2)
\le\frac{\epsilon_1^2}{2},
\eeq
where $\epsilon_1>0$ is given by lemma \ref{life_est}. By the strong convergence
of $(u_0^k,\nabla d_0^k)$ to $(u_0,\nabla d_0)$ in $L^2(\R^2)$, we have that
\beq \label{small_data12}
\sup_{x\in\R^2}\int_{B_{2R_0}(x)}(|u_0^k|^2+|\nabla d_0^k|^2)
\le {\epsilon_1^2}, \ \forall k>>1.
\eeq
For simplicity, we assume (\ref{small_data12})
holds for all $k\ge 1$.  By lemma \ref{life_est},
there exist $\theta_0=\theta_0(\epsilon_1, E_0)\in (0, 1)$
and $T_0^k\ge \theta_0 R_0^2$ such that there exist
solutions
$(u^k,d^k)\subset C^\infty(\R^2\times [0, T_0^k],\mathbb R^2\times \mathbb S^2)$  to (\ref{ES2}) and (\ref{IV}) with the initial condition:
\beq\label{initial_cond}
(u^k,d^k)\big|_{t=0}=(u_0^k,d_0^k),
\eeq
that satisfies
\beq\label{small_data13}
\sup_{(x,t)\in\R^2\times [0, T_0^k]}\int_{B_{R_0}(x)}(|u^k|^2+|\nabla d^k|^2)(t)
\le 2\epsilon_1^2, \ \forall k\ge 1.
\eeq
By lemma \ref{global_energy_ineq}, we have that for all $k\ge 1$,
\beq\label{global_energy20}
&&\sup_{0\le t\le T_0^k}\int_{\R^2} (|u^k|^2+|\nabla d^k|^2)(t)
+\int_{\R^2\times [0, T_0^k]}(\mu_4|\nabla u^k|^2+\frac{2}{|\lambda_1|}|\nabla^2 d^k|^2)\nonumber\\
&&\le \int_{\R^2} (|u_0^k|^2+|\nabla d_0^k|^2)\le CE_0.
\eeq
Combining (\ref{small_data13}) and (\ref{global_energy20})  with
lemma \ref{life_est}, we conclude that
\beq \label{l4-est-uk-dk}
\int_{\R^2\times [0, T_0^k]}(|u^k|^4+|\nabla d^k|^4)
\le C\epsilon_1^2 E_0,
\eeq
\beq\label{22-est-dk}
\int_{\R^2\times [0, T_0^k]}(|\partial_t d^k|^2+|\nabla^2 d^k|^2+|P^k|^2)\le CE_0,
\eeq
and
\beq\label{22-est-dk1}
\sup_{x\in\R^2}\int_0^{T_0^k}\int_{B_{R_0}(x)}(|\nabla u^k|^2+|\nabla^2 d^k|^2+|P^k|^2)\le C\epsilon_1^2.
\eeq
Furthermore,  (\ref{ES2}) implies that for any $\phi\in \mathbf J$,
$$
\langle \partial_t u^k, \phi\rangle=-\int_{R^2} \nabla u^k\cdot\nabla \phi
+\int_{\R^2} (u^k\otimes u^k +\nabla d^k\odot\nabla d^k-\sigma^L(u^k, d^k)):\nabla\phi,
$$
where $\langle\cdot,\cdot\rangle$ denotes the pair between $H^{-1}(\R^2)$ and $H^1(\R^2)$,
we conclude that $\partial_t u^k\in L^2([0,T_0^k], H^{-1}(\R^2))$ and
\beq\label{weak-est-ut}
\left\|\partial_t u^k\right\|_{L^2([0,T_0^k], H^{-1}(\R^2))}\le C E_0.
\eeq
It follows from (\ref{l4-est-uk-dk}) and (\ref{22-est-dk1}) that
$$\Phi\left(u^k, d^k, P^k, (x,t), R_0\right)\le C\epsilon_1^2, \ \forall x\in\mathbb R^2, R_0^2\le t\le T_0^k.$$
Hence, by Theorem \ref{regularity}, we conclude that for any $\delta>0$,
\beq\label{c2-est}
\left\|(u^k, d^k)\right\|_{C^{l}(\R^2\times [\delta, T_0^k])}\leq C\left(l, \epsilon_1, E_0, \delta\right), \ \forall l\ge 1.
\eeq
After passing to a subsequence, we may assume that there exist $T_0\ge \theta_0R_0^2$,
$u\in L^\infty_tL^2_x\cap L^2_tH^1_x(\R^2\times [0, T_0],\mathbb R^2)$,
$d\in L^\infty_tH^1_{e_0}\cap L^2_tH^2_{e_0}(\R^2\times [0, T_0], \mathbb S^2)$,
and $P\in L^2(\R^2\times [0, T_0])$ such that
$$u^k\rightharpoonup u \ {\rm{in}}\ L^2_tH^1_x(\R^2\times [0, T_0], \mathbb R^2),\
d^k\rightharpoonup d \ {\rm{in}}\ L^2_t H^2_{e_0}(\R^2\times [0, T_0], \mathbb S^2),
\ P^k\rightharpoonup P \ {\rm{in}}\ L^2(\R^2\times [0,T_0]).$$
It follows from (\ref{22-est-dk}) and (\ref{weak-est-ut}) that we can apply Aubin-Lions' lemma to conclude
that
$$u^k\rightarrow u, \ \nabla d^k\rightarrow \nabla d \ \ {\rm{in}}\ \  L^4_{\rm{loc}}(\R^2\times [0, T_0]).
$$
By (\ref{c2-est}), we may assume that for any $0<\delta<T_0, 0<R<+\infty$ and $l\ge 1$,
$$\lim_{k\rightarrow\infty}\left\|(u^k,d^k)-(u,d)\right\|_{C^l(B_R\times [\delta, T_0])}=0.$$
It is clear that $u\in C^{\infty}(\R^2\times (0,T_0],\mathbb R^2)\cap
(L^\infty_tL^2_x\cap L^2_tH^1_x)(\R^2\times [0,T_0],\R^2)$,
$d\in C^\infty(\R^2\times (0, T_0], \mathbb S^2)\cap  (L^2_tH^1_{e_0}\cap L^2_tH^2_{e_0})(\R^2\times [0,T_0],\mathbb S^2)$ solves
(\ref{ES2}) in $\R^2\times (0,T_0]$.
It follows from (\ref{weak-est-ut}) and (\ref{22-est-dk})
that we can assume
$$(u,\nabla d)(t)\rightharpoonup (u_0,\nabla d_0)\  {\rm{in }}\ L^2(\R^2), \ {\rm{as}}\ t\downarrow 0.$$
In particular, by the lower semicontinuity we have that
$$E(0)\le \liminf_{t\downarrow 0}E(t).$$
On the other hand, (\ref{global_energy20}) implies
$$E(0)\ge \limsup_{t\downarrow 0}E(t).$$
This implies that $(u,\nabla d)(t)\rightarrow (u_0,\nabla d_0)$ in $L^2(\R^2)$ and hence $(u,d)$ satisfies the initial condition (\ref{IV}).

Let $T_1\in [T_0, +\infty)$ be the first finite singular time of $(u,d)$, i.e.,
$$(u,d)\in  C^{\infty}(\R^2\times (0,T_1),\mathbb R^2\times \mathbb S^2)
\ \ {\rm{but}}\ \ (u,d)\notin  C^{\infty}(\R^2\times (0,T_1],\mathbb R^2\times \mathbb S^2).$$
Then  we must have
\beq\label{char_1st_singu}
\limsup_{t\uparrow T_1}\max_{x\in\R^2}\int_{B_R(x)}
(|u|^2+|\nabla d|^2)(t)\ge \epsilon_1^2,  \ \forall R>0.
\eeq
Now we look for an extension of this weak solution beyond $T_1$.  To do it,
we define the new initial data at $t=T_1$.  \\
\noindent{\it Claim 2}. $(u,d)\in C^0([0,T_1], L^2(\R^2))$. \\This follows easily from (\ref{weak-est-ut}) and (\ref{22-est-dk}).
By Claim 2, we can define
$$(u(T_1), d(T_1))=\lim_{t\uparrow T_1}(u(t),d(t)) \ {\rm{in}}\ L^2(\R^2).$$
By lemma \ref{global_energy_ineq} we have that $\nabla d\in L^\infty([0,T_1], L^2(\R^2))$ so that
$\nabla d(t)\rightharpoonup \nabla d(T_1)$ in $L^2(\R^2)$.
In particular, $u(T_1)\in \mathbf H$ and $d(T_1)\in H^1_{e_0}(\R^2,\mathbb S^2)$.

Use $(u(T_1),d(T_1))$ as an initial data  to obtain a continuation of $(u,d)$ beyond $T_1$ as a weak solution of (\ref{ES2}) and (\ref{IV}),
we will show that this procedure will cease in finite steps and  afterwards we will have constructed a  global weak solution.
In fact, at any such singular time there is at least a loss of energy amount of $\epsilon_1^2$.
By (\ref{char_1st_singu}), there exist $t_i\uparrow T_1$ and $x_0\in\R^2$ such that
$$\limsup_{t_i\uparrow T_1}\int_{B_R(x_0)}(|u|^2+|\nabla d|^2)(t_i)\ge \epsilon_1^2
\ \ {\rm{for\ all}}\ R>0.$$
This implies
\beqno
&&\int_{\R^2} (|u|^2+|\nabla d|^2)(T_1)=\lim_{R\downarrow 0}\int_{\R^2\setminus B_R(x_0)} (|u|^2+|\nabla d|^2)(T_1)
\le \lim_{R\downarrow 0}\liminf_{t_i\uparrow T_1}\int_{\R^2\setminus B_R(x_0)} (|u|^2+|\nabla d|^2)(t_i)\nonumber\\
&&\le \lim_{R\downarrow 0}\ \Big[\liminf_{t_i\uparrow T_1}\int_{\R^2}(|u|^2+|\nabla d|^2)(t_i)
-\limsup_{t_i\uparrow T_1}\int_{B_R(x_0)}(|u|^2+|\nabla d|^2)(t_i)\Big]
\le  E_0-\epsilon_1^2.
\eeqno
Hence the number of finite singular times must be bounded by $\displaystyle L=\left[\frac{E_0}{\epsilon_1^2}\right]$.
If $0<T_L<+\infty$ is the largest finite singular time, then we can use $(u(T_L), d(T_L))$
as the initial data to construct a weak solution $(u,d)$
to (\ref{ES2}) and (\ref{IV})  in $\R^2\times [T_L, +\infty)$.  Thus (i) of Theorem \ref{existence}
is established.  It is also clear that (iii) of Theorem \ref{existence} holds for the solution constructed.

Now, we perform a blow-up analysis at each finite singular time.
It follows from (\ref{char_1st_singu}) that there exist $0<t_0<T_1$, $t_m\uparrow T_1$, $r_m\downarrow 0$
such that
\beq\label{concentration1}
\epsilon_1^2
=\sup_{x\in\R^2, t_0\le t\le t_m} \int_{B_{r_m}(x)}(|u|^2+|\nabla d|^2)(t).
\eeq
By lemma \ref{life_est}, there exist $\theta_0$, depending only on $\epsilon_1$ and $E_0$,
and $x_m\in\R^2$
such that
\begin{eqnarray}\label{concentration2}\int_{B_{2r_m}(x_m)}(|u|^2+|\nabla d|^2)(t_m-\theta_0 r_m^2)
\ge\frac12
\sup_{x\in\R^2}\int_{B_{2r_m}(x)}(|u|^2+|\nabla d|^2)(t_m-\theta_0r_m^2)
\ge \frac{\epsilon_1^2}{4}.
\end{eqnarray}
By lemma \ref{life_est} and (\ref{concentration1}), we  have
\beq\label{l4-boundness}
\int_{\R^2\times [t_0,t_m]}(|u|^4+|\nabla d|^4)\le C\epsilon_1^2.
\eeq
Define the blow-up sequence $(u_m, d_m, P_m):\R^2\times
[\frac{t_0-t_m}{r_m^2}, 0]\to \mathbb R^2\times\mathbb S^2\times\R$ by
$$
u_m(x,t)=r_m u(x_m+r_m x, t_m +r_m^2 t), \
d_m(x,t)=d(x_m+r_m x, t_m+r_m^2 t), \ P_m=r_m^2P(x_m+r_m x, t_m +r_m^2 t).$$
Then  $(u_m,d_m, P_m)$ solves (\ref{ES2}) in $\R^2\times [\frac{t_0-t_m}{r_m^2}, 0]$.
Moreover,
$$\int_{B_2(0)}\big(|u_m|^2+|\nabla d_m|^2\big)(-\theta_0)\ge \frac{\epsilon_1^2}{2},$$
$$\int_{B_1(x)}\big(|u_m|^2+|\nabla d_m|^2\big)(t)\le \epsilon_1^2,
\ \forall x\in \R^2, \ \frac{t_0-t_m}{r_m^2}\le t\le 0,$$
$$\int_{\R^2\times [\frac{t_0-t_m}{r_m^2}, 0]}\big(|u_m|^4+|\nabla d_m|^4\big)\le C\epsilon_1^2,$$
and
$$\int_{P_2(z)}\left(|\nabla u_m|^2+|\nabla^2 d_m|^2+|P_m|^2\right)\le C\epsilon_1^2,
\ \forall\ z=(x,t)\in \R^2\times [\frac{t_0-t_m}{r_m^2}, 0].$$
It is easy to see $\frac{t_0-t_m}{r_m^2}\rightarrow -\infty$.
Hence, by Theorem \ref{regularity}, we can assume that there exists a smooth solution
$(u_\infty,d_\infty):\mathbb R^2\times (-\infty, 0]\to \mathbb R^2\times \mathbb S^2$ to (\ref{ES2}) such
that for any $l\ge 1$,
$$(u_m,d_m)\rightarrow (u_\infty,d_\infty) \ {\rm{in}} \ C^l_{\rm{loc}}(\mathbb R^2\times [-\infty,0]).$$
\noindent{\it Claim 3}. $u_\infty\equiv 0$. \\In fact,
since $u\in L^4(\R^2\times [0,T_1])$, we have
$$
\int_{P_R}|u_\infty|^4=\lim_{m\rightarrow\infty} \int_{P_R}|u_m|^4
=\lim_{m\rightarrow\infty} \int_{B_{Rr_m}(x_m)}\int_ {t_m-R^2r_m^2}^ {t_m}|u|^4=0.$$
\noindent{\it Claim 4}. {\it $d_\infty\in C^\infty(\R^2,\mathbb S^2)$ is a nontrivial harmonic map with finite energy}.\\
Since $(\Delta d+|\nabla d|^2 d)\in L^2(\R^2\times [0,T_1])$, we have, for any compact $K\subset\mathbb R^2$,
\begin{eqnarray*}
\int_{-2\theta_0}^0\int_{K}|\Delta d_\infty+|\nabla d_\infty|^2 d_\infty|^2
&\le &\liminf_{m} \int_{-2\theta_0}^0\int_{\Omega_m}|\Delta d_m+|\nabla d_m|^2 d_m|^2\\
&=& \lim_{m} \int_{t_m-2\theta_0 r_m^2}^{t_m}\int_{\R^2}|\Delta d+|\nabla d|^2 d|^2=0.
\end{eqnarray*}
By the equation (\ref{ES2})$_3$ and $u_\infty\equiv 0$, this implies
$\partial_t d_\infty\equiv 0$ on $\mathbb R^2\times [-2\theta_0,0]$.
Hence $d_\infty(t)\equiv d_\infty\in C^\infty(\mathbb R^2, \mathbb S^2)$ is a harmonic map. Since
$$\int_{B_2}|\nabla d_\infty|^2=\lim_{m}\int_{B_2}(|u_m|^2+|\nabla d_m|^2)(-\theta_0)\ge \frac{\epsilon_1^2}{4},$$
$d_\infty$ is a non-constant map. By the lower semicontinuity,
we have that for any ball $B_R\subset\mathbb R^2$,
$$\int_{B_R}|\nabla d_\infty|^2
\le \liminf_{m}\int_{B_R}|\nabla d_m|^2(-\theta_0)
=\liminf_{m}\int_{B_{r_mR}(x_m)}|\nabla d|^2(t_m-\theta_0r_m^2)\le E_0,$$
and hence $d_\infty$ has finite energy.  It is well-known (\cite{sacks-uhlenbeck} \cite{struwe})
that $d_\infty$ can be lifted to be a non-constant harmonic map from $\mathbb S^2$ to $\mathbb S^2$.
In particular, $d_\infty$ has a non zero degree and
$$\int_{\mathbb R^2}|\nabla d_\infty|^2\ge 8\pi|{\rm{deg}}(d_\infty)|\ge 8\pi.$$
It follows from the above argument that  for any $r>0$,
$$\limsup_{t\uparrow T_1}\max_{x\in\R^2}\int_{B_r(x)}(|u|^2+|\nabla d|^2)(t)
\ge \int_{\mathbb R^2}(|u_\infty|^2+|\nabla d_\infty|^2)\ge 8\pi.$$

To show (iv). By lemma \ref{global_energy_ineq}, there exists $t_k\uparrow +\infty$ such that for $(u_k,d_k)=(u(t_k),
d(t_k))$,
$$\int_{\R^2}(|u_k|^2+|\nabla d_k|^2)\le E_0,\ \ \lim_{k\rightarrow\infty}
\int_{\R^2} (|\nabla u_k|^2+|\Delta d_k+|\nabla d_k|^2 d_k|^2)=0.$$
It is easy to see that $u_k\rightarrow 0$ in $H^1(\R^2)$, and
$\{d_k\}\subset H^1_{e_0}(\R^2,\mathbb S^2)$ is a bounded sequence of approximate harmonic maps, with
tension fields $\tau(d_k)=\Delta d_k+|\nabla d_k|^2 d_k$ converging to zero in $L^2(\R^2)$. By the energy identity result
by Qing \cite{qing} and Lin-Wang \cite{lin-wang}, we can conclude that there exist a harmonic map
$d_\infty \in C^\infty\cap H^1_{e_0}(\R^2,\mathbb S^2)$,
and finitely many points $\{x_i\}_{i=1}^l$, $\{m_i\}_{i=1}^l\subset\mathbb N$, such that
$$|\nabla d_k|^2\,dx\rightharpoonup |\nabla d_\infty|^2\,dx+\sum_{i=1}^l 8\pi m_i \delta_{x_i}$$
as convergence of Radon measures. This yields (iv).

To show (v) under the condition $\int_{\R^2}(|u_0|^2+|\nabla d_0|^2)\le 8\pi$,  we divide the argument into two cases:\\
(a) {\it There exist no finite time singularities.} \\ For, otherwise, (ii) implies
that we can blow up near the first singular time $T_1$ to obtain a nontrivial harmonic map
$\omega\in C^\infty(\mathbb R^2, \mathbb S^2)$ and
$$8\pi\le \int_{\mathbb R^2}|\nabla \omega|^2\le\lim_{t\uparrow T_1}
\int_{\R^2} (|u|^2+|\nabla d|^2)(t)\le \int_{\R^2} (|u_0|^2+|\nabla d_0|^2)\le 8\pi.$$
This, combined with lemma \ref{global_energy_ineq}, yields
$$\int_0^{T_1}\int_{\R^2} (|\nabla u|^2+|\Delta d+|\nabla d|^2 d|^2)=0$$
so that $u=\partial_t d\equiv 0$ in $\R^2\times [0,T_1]$ and hence $d(t)=d_0\in C^\infty\cap H^1_{e_0}(\R^2, \mathbb S^2)$,
$0\le t\le T_1$, is a harmonic map.   This contradicts the fact that $T_1$ is a singular time. \\
(b) {\it $\displaystyle\phi(t)\equiv\sup_{x\in\R^2, \tau\le t}(|u|+|\nabla d|)(x,\tau)$ remains bounded
as $t\uparrow +\infty$}.\\
For, otherwise, there exist $t_k\uparrow +\infty$ and $x_k\in\R^2$ such that
$\displaystyle\lambda_k=\phi(t_k)=(|u|+|\nabla d|)(x_k,t_k)\rightarrow +\infty.$
Define  $(u_k,d_k):\R^2\times [-t_k\lambda_k^2,0]
\to \mathbb R^2\times \mathbb S^2$ by
$$ u_k(x,t)=\lambda_k^{-1}u(x_k+\lambda_k^{-1}x, t_k+\lambda_k^{-2} t),
\ d_k(x,t)=d(x_k+\lambda_k^{-1}x, t_k+\lambda_k^{-2} t), P_k(x,t)=\lambda_k^{-2} P(x_k+\lambda_k^{-1}x,
t_k+\lambda_k^{-2} t).$$
Then $(u_k,d_k, P_k)$ solves (\ref{ES2}) on $\R^2\times [-t_k\lambda_k^2,0]$, and
$$1=(|u_k|+|\nabla d_k|)(0,0)
\ge (|u_k|+|\nabla d_k|)(x,t), \ \forall (x,t)\in \R^2\times [-t_k\lambda_k^2,0].$$
As in the proof of (ii), we can conclude that
$(u_k,d_k)\rightarrow (0, d_\infty)$ in $C^2_{\rm{loc}}(\mathbb R^2)$,
where $d_\infty\in C^\infty(\mathbb R^2, \mathbb S^2)$ is a nontrivial harmonic map with finite energy.
As in (a), this implies that
$$\int_0^\infty\int_{\R^2}(|\nabla u|^2+|\Delta d+|\nabla d|^2 d|^2)=0$$
so that $u=\partial_t d\equiv 0$ in $\R^2\times [0,+\infty)$ and hence $d(t)=d_0\in C^\infty(\R^2,\mathbb S^2)$,
$0\le t<+\infty$, is a harmonic map. This implies that $\phi(t)$ is constant for $0<t<+\infty$ and
we get a desired contradiction.

Since $\phi(t)$ is a bounded function of $t\in (0,+\infty)$, the higher order regularity Theorem
\ref{regularity} implies that
$$\|\nabla^l u(t)\|_{C^0(\R^2)}+\|\nabla^{l+1} d(t)\|_{C^0(\R^2)}\le C(l), \ \forall\ l\ge 1,  \ \forall\  t\ge 1. $$
Thus we can choose $t_k\uparrow \infty$ such
that
$$\int_{\R^2} \left(|u|^2+|\nabla u|^2\right)(t_k)\le E_0,
\ \ \int_{\R^2} \left(|\nabla u|^2+|\Delta d+|\nabla d|^2d|^2\right)(t_k)\rightarrow 0,$$
and
$$\|u(t_k)\|_{C^2(\R^2)}+\|d(t_k)\|_{C^2(\R^2)}\le C.$$
Thus we may assume that there exist a harmonic map $d_\infty\in C^\infty\cap H^1_{e_0}(\R^2, \mathbb S^2)$ such that
$$(u(t_k), d(t_k))\rightarrow (0, d_\infty)
\ {\rm{in}}\ C^2_{\rm{loc}}(\R^2, \mathbb S^2).$$
This proves (v) under the first condition.

To show (v) under the condition $(d_0)_3\ge 0$. We argue as follows. First, we can approximate $(u_0, d_0)$ by smooth initial data
$(u_0^k, d_0^k)$ such that the third component of $d_0^k$ is non-negative, i.e.,  $(d_0^k)_3\ge 0$. Then we can check that
the short time smooth solutions $(u^k, d^k)$ of (\ref{ES2}),  with the initial data $(u_0^k, d_0^k)$ at $t=0$, on $\R^2\times [0,T_k]$,
have bounded gradients:
$$\Big\||\nabla u^k|+|\nabla d^k|\Big\|_{C^0(\R^2\times [0, T_k])}\le C(k)<+\infty.$$
Since $(d^k)_3$ satisfies the equation
$$\big(\partial_t-\frac{1}{|\lambda_1|}\Delta\big) (d^k)_3+u\cdot\nabla (d^k)_3=\big(\frac{1}{|\lambda_1|}|\nabla d^k|^2+
\frac{\lambda_2}{\lambda_1}\big((\hat {d^k})^TA_k \hat {d^k}\big) (d^k)_3,$$
and the coefficient in front of $(d^k)_3$, $\big(\frac{1}{|\lambda_1|}|\nabla d_k|^2
+\frac{\lambda_2}{\lambda_1}((\hat {d^k})^TA_k \hat {d^k}\big)$ is bounded. Hence we can apply the maximum
principle (see \cite{LSU}) to conclude that $(d^k)_3\ge 0$ in $\R^2\times [0,T_k]$. Sending $k$ to infinity, we conclude that
the global weak solution $(u,d)$  to (\ref{ES2}) and (\ref{IV}), obtained in the part (i), satisfies $d_3\ge 0$. If $(u,d)$ has any finite time singularity,
then by performing the blow-up argument we would obtain a nontrivial harmonic map $\omega$ from $\mathbb S^2$ to $\mathbb S^2$ such that
$\omega_3\ge 0$, which is impossible. Hence $(u,d)$ has no finite time singularity. If $(u,d)$ has singularity at the time infinity, then we would
also obtain a nontrivial harmonic map from $\mathbb S^2$ to the upper hemisphere, which is also impossible. Therefore, $(u,d)$ has bounded
$C^2$-norm in $\R^2\times (\delta,+\infty)$ for any $\delta>0$. This proves (v) under the second condition. 
The proof of Theorem \ref{existence} is now complete.
\qed
\bigskip

\noindent{\bf Added Note}. The third author presented the main results of this article in the workshop 
``Nonlinear analysis of continuum theories: statics and dynamics'' at the University of Oxford,
April 8-12, 2013. During the finalization of this paper, Wendong Wang sent the third author his preprint 
``{GLOBAL EXISTENCE OF WEAK SOLUTION FOR THE 2-D ERICKSEN-LESLIE SYSTEM}'',
in which they also claimed an existence result, similar to part (i) of our Theorem 1.4. However, since their proof is based on
a global energy inequality of second order before the first time of energy concentration,  it fails to be complete. 

\bigskip
\section*{Acknowledgment} The second author is partially supported by NSF grants. The third author is partially supported by NSF grants 1001115 and 1265574,
NSF of China grant 11128102, and a Simons Fellowship in Mathematics. Part of this work was done during the first author's visit to the third author
at the University of Kentucky. The first author would like to thank the Department of Mathematics, University of Kentucky for its hospitalities.

\vskip 0.8cm

\addcontentsline{toc}{section}{Bibliography}


\begin{thebibliography}{99}

\bibitem{BKM}
             T. Beale, T. Kato, A. Majda,
		Remarks on the breakdown of smooth solutions for the 3D Euler equation.
             Commun. Math. Phys. {\bf 94} (1984), 61--66.

\bibitem{CKN}
	      L. Caffarelli,  R. Kohn, L. Nirenberg,
	      Partial regularity of suitable weak solutions of the Navier-Stokes equations.
	      Comm. Pure Appl. Math. {\bf 35} (1982), 771--831.

\bibitem{CDY}
	      K. C. Chang, W. Y. Ding, R. Ye,
	      Finite-time blow-up of the heat flow of harmonic maps from surfaces.
	      J. Diff. Geom. {\bf 36} (1992) (2), 507--515.

\bibitem{DW}
	      S. J. Ding, H. Y. Wen,
            Solutions of incompressible hydrodynamic flow of liquid crystals.
            Nonlinear Anal. Real World Appl. {\bf 12} (2011), no. 3, 1510--1531.

\bibitem{Ericksen1}
	      J. Ericksen,
	      Conservation laws for liquid crystals.
	      Trans. Soc. Rheol. {\bf 5} (1961), 22--34.

\bibitem{Ericksen2}
	J. Ericksen,
	Hydrostatic Theory of Liquid Crystal.
	Arch. Rational Mech. Anal.
	{\bf 9} (1962), 371--378.

\bibitem{Ericksen3}
	J. Ericksen,
	Continuum theory of nematic liquid crystals.
	Res. Mechanica {\bf 21} (1987), 381-392.

\bibitem{Genn}
      	 P. De Gennes, G. Prost,
	 The Physics of Liquid Crystals.
	 Oxford Science Publications, 1993.

\bibitem{HLK}
            R. Hardt, F. H. Lin, D. Kinderlerher,
            Existence and partial regularity of static liquid crystal configurations.
            Comm. Math. Phys. {\bf 105} (1986), no. 4, 547--570.
\bibitem{Hong}
           M. C. Hong,
           Global existence of solutions of the simplified Ericksen-Leslie system in dimension two.
           Calc. Var. Partial Differential Equations {\bf 40} (2011), no. 1-2, 15-36.

\bibitem{HX}
            M. C. Hong, Z. P. Xin,
           Global existence of solutions of the liquid crystal flow for the Oseen-Frank model in $\R^2$.
           Adv. Math. {\bf 231} (2012), no. 3-4, 1364-1400.

\bibitem{HW}
             T. Huang, C. Y. Wang,
		Blow up criterion for nematic liquid crystal flows.
		Comm. Partial Differential Equations {\bf 37} (2012), no. 5, 875-884.

\bibitem{Huang-Wang}
	     T. Huang, C. Y. Wang, 
	     Notes on the regularity of harmonic map systems. 
	     Proc. Amer. Math. Soc., {\bf 138} (2010) (6), 2015-2023.

\bibitem{Hine-Wang}
		J. Hineman, C. Y. Wang,
             Well-posednedd of nematic liquid crystal flow in $L^3_{\rm{uloc}}(\R^3)$. Arch. Ration. Mech. Anal., in press.


\bibitem{LSU}
         O.A. Lady\u{z}enskaja, V. A. Solonnikov, N. N. Ural'ceva,
         Linear and Quasilinear Equations of Parabolic Type.
         Translations of Mathematical Monographs, {\bf 23}, Amer. Math. Soc. Providence RI, 1967.

\bibitem{Leray}
J. Leray, {Sur le mouvement d'un liquide visqueux emplissant l'espace.}
Acta. Math. {\bf 63} (1934), 183-248.

\bibitem{Leslie1}
         F. Leslie,
	  Some constitutive equations for liquid crystals.
	  Arch. Rational Mech. Anal. 28 (1968) 265--283.

\bibitem{Leslie2}
	  F. Leslie,
	  Theory of flow phenomena in liquid crystals. In: {\it The Theory of Liquid Cystals}, Vol. 4, pp. 1-81. Academic Press, London-New York, 1979.

\bibitem{Lin1}
         F.H. Lin,
         Nonlinear theory of defects in nematic liquid crystals: phase transition and flow phenomena.
         Comm. Pure Appl. Math. 42 (1989) 789--814.

\bibitem{Lin-Liu1}
         F.H. Lin, C. Liu,
         Nonparabolic dissipative systems modeling the flow of liquid crystals.
         Comm. Pure Appl. Math. 48 (1995) 501--537.

\bibitem{Lin-Liu2}
         F.H. Lin, C. Liu,
         Partial regularities of the nonlinear dissipative systems modeling the flow of liquid crystals.
         Discrete Contin. Dynam. Systems, 2 (1996) 1--22.

\bibitem{Lin-Liu3}
         F.H. Lin, C. Liu,
         Existence of solutions for the Ericksen-Leslie system.
         Arch. Rational Mech. Anal. 154 (2000) 135--156.

\bibitem{LLW}
         F.H. Lin, J. Y. Lin, C. Y. Wang,
         Liquid crystal flows in two dimensions.
         Arch. Rational Mech. Anal. {\bf 197} (2010) 297--336.

\bibitem{LW}
	   F. H. Lin, C. Y. Wang,
	   On the uniqueness of heat flow of harmonic maps and hydrodynamic flow of nematic liquid crystals.
	   Chin. Ann. Math. Ser. B {\bf 31} (2010), no. 6, 921--938.

\bibitem{lin-wang}
             F. H. Lin, C. Y. Wang,
	     Energy identity of harmonic map flows from surfaces at finite singular time. Calc. Var. \& P.D.E., {\bf 6} (1998), 4, 369-380.

\bibitem{LZZ}
	      D. Li, X. Y. Zhang, Z. Lei,
	      Remarks of Global Wellposedness of Liquid Crystal Flows and Heat Flows of Harmonic Maps in Two Dimensions.
             Proc. AMS, to appear.

\bibitem{qing}
	      J. Qing,
             On singularities of the heat flow for harmonic maps from surfaces into spheres. Comm. Anal. Geom. {\bf 3} (1995), 1–2, 297--315.
\bibitem{schoen-uhlenbeck}
             R. Schoen, K. Uhlenbeck,
             Approximation of Sobolev maps between Riemannian manifolds. Preprint (1984).

\bibitem{sacks-uhlenbeck}
	      J. Sacks, K. Uhlenbeck,
            The existence of minimal immersions of 2-spheres. Ann. Math. (2) {\bf 113} (1981), 1, 1--24.


\bibitem{struwe}
            M. Struwe,
            On the evolution of harmonic mappings of Riemannian surfaces.
            Comment. Math. Helvetici, {\bf 60} (1985), 558--581.

\bibitem{Temam}
		R. Temam,
		Navier-Stokes Equations. Studies in Mathematics and its Applications 2,
North Holland, Amsterdam, 1977.








\bibitem{W}
            C. Y. Wang,
	      Well-posedness for the heat flow of harmonic maps and the liquid crystal flow with rough initial data.
            Arch. Ration. Mech. Anal. {\bf 200} (2011), no. 1, 1--19.

\bibitem{WZZ}
		W. Wang, P. W. Zhang, Z. F. Zhang,
	      Well-posedness of the Ericksen-Leslie system.
	      arXiv: 1208.6107.

\bibitem{WXL}
             H. Wu, X. Xu, C. Liu,
		On the general Ericksen-Leslie system: Parodi's relation, well-posedness and stablity.
             Arch. Ration. Mech. Anal. {\bf 208} (2013), no. 1, 59--107.

\bibitem{XZ}
	      X. Xu, Z. F. Zhang,
	      Global regularity and uniqueness of weak solution for the 2-D liquid crystal flows.
             J. Differential Equations {\bf 252} (2012), no. 2, 1169--1181.

\end{thebibliography}
\end{document}